\numberwithin{equation}{section}
\newcommand{\llb}{\llbracket}
\newcommand{\rrb}{\rrbracket}
\renewcommand{\epsilon}{\varepsilon}
\renewcommand{\phi}{\varphi}
\renewcommand{\theta}{\vartheta}
\DeclareMathOperator{\aug}{aug}
\DeclareMathOperator{\Aut}{Aut}
\DeclareMathOperator{\diag}{diag}
\DeclareMathOperator{\Frac}{Frac}
\DeclareMathOperator{\Gal}{Gal}
\DeclareMathOperator{\GL}{GL}
\DeclareMathOperator{\gr}{gr}
\DeclareMathOperator{\id}{id}
\DeclareMathOperator{\Irr}{Irr}
\DeclareMathOperator{\ord}{ord}
\DeclareMathOperator{\res}{res}
\DeclareMathOperator{\Quot}{Quot}
\newcommand{\SK}{S\!K}
\DeclareMathOperator{\Tr}{Tr}
\newcommand{\FFF}{\mathfrak F}
\newcommand{\KKK}{\mathfrak K}
\newcommand{\LLL}{\mathfrak L}
\newcommand{\mm}{\mathfrak m}
\newcommand{\pp}{\mathfrak p}
\newcommand{\G}{\mathcal G}
\newcommand{\OO}{\mathcal O}
\newcommand{\Q}{\mathcal Q}
\newcommand{\CC}{\mathbb C}
\newcommand{\FF}{\mathbb F}
\newcommand{\QQ}{\mathbb Q}
\newcommand{\RR}{\mathbb R}
\newcommand{\ZZ}{\mathbb Z}
\newcommand{\al}{\mathrm{c}}
\newcommand{\cent}{\mathfrak z}
\newcommand{\Dchi}{\mathfrak D}
\newcommand{\Emax}{\mathfrak E}
\newcommand{\pc}{\delta_\gamma}
\newcommand{\pt}{\delta_\tau}
\newcommand{\gcp}{\gamma^{\prime\prime}_\eta}
\newcommand{\gcpf}{\gamma^{ F,\prime\prime}_\eta}
\newcommand{\Gcp}{\Gamma^{\prime\prime}_\eta}
\newcommand{\hD}{\widehat D}
\newcommand{\Dcp}{\widehat{\Dchi}_\pp}%{\widehat{D}_\chi^\pp}
\newcommand{\Sigmachi}{\mathfrak O}
\newcommand{\sss}{\nu_{\chi}}
\newcommand{\tD}{\widetilde D}
\newcommand{\tauf}{\tau}
\newcommand{\vf}{v_\chi}
\newcommand{\af}{a}
\newcommand{\DFchi}{D_\chi}
\newcommand{\nFchi}{n_\chi}
\newcommand{\sFchi}{s_\chi}
\newcommand{\nFeta}{n_\eta}
\newcommand{\sFeta}{s_\eta}
\newcommand{\yFeta}{y_\eta}
\newcommand{\YFeta}{Y_\eta}
\newenvironment{psmallmatrix}
{\left(\begin{smallmatrix}}
	{\end{smallmatrix}\right)}
\newlist{theoremlist}{enumerate}{1}
\setlist[theoremlist]{label=(\roman{theoremlisti}), ref=\thetheorem.\roman{theoremlisti},noitemsep, topsep=.2ex}
\newlist{propositionlist}{enumerate}{1}
\setlist[propositionlist]{label={(\roman{propositionlisti})}, ref=\theproposition.\roman{propositionlisti},noitemsep, topsep=.2ex}
\newlist{conjecturelist}{enumerate}{1}
\setlist[conjecturelist]{label={(\roman{conjecturelisti})}, ref=\theconjecture.\roman{conjecturelisti},noitemsep, topsep=.2ex}
\newlist{lemmalist}{enumerate}{1}
\setlist[lemmalist]{label=(\roman{lemmalisti}), ref=\thelemma.\roman{lemmalisti},noitemsep, topsep=.2ex}
\let\underbrace\relax
\let\underbrace\LaTeXunderbrace
\theoremstyle{plain}
\newtheorem{theorem}{Theorem}[section]
\newtheorem{lemma}[theorem]{Lemma}
\newtheorem{proposition}[theorem]{Proposition}
\newtheorem{corollary}[theorem]{Corollary}
\newtheorem*{theorem*}{Theorem}
\newtheorem*{conjecture*}{Conjecture}
\theoremstyle{definition}
\newtheorem{definition}[theorem]{Definition}
\theoremstyle{remark}
\newtheorem{remark}[theorem]{Remark}
\Crefname{lemma}{Lemma}{Lemmata}
\Crefname{claim}{Claim}{Claims}
\Crefname{proposition}{Proposition}{Propositions}
\Crefname{conjecture}{Conjecture}{Conjectures}
\Crefname{example}{Example}{Examples}
\crefname{page}{page}{pages}
\Crefname{condition}{Condition}{Conditions}
\Crefname{question}{Question}{Questions}
\Crefname{theoremlisti}{Theorem}{Theorems}
\Crefname{propositionlisti}{Proposition}{Propositions}
\Crefname{conjecturelisti}{Conjecture}{Conjectures}
\Crefname{lemmalisti}{Lemma}{Lemmata}
\newcommand\reallywidehat[1]{%
	\savestack{\tmpbox}{\stretchto{%
			\scaleto{%
				\scalerel*[\widthof{\ensuremath{#1}}]{\kern-.6pt\bigwedge\kern-.6pt}%
				{\rule[-\textheight/2]{1ex}{\textheight}}%WIDTH-LIMITED BIG WEDGE
			}{\textheight}% 
		}{0.5ex}}%
	\stackon[1pt]{#1}{\tmpbox}%
}
\NewDocumentEnvironment{noproof}{m}{% #1 is the inner environment
  \par\pushQED{\qed}\UseName{#1}%
}{\popQED\UseName{end#1}}
\title[On the Wedderburn decomposition of $\Q^ F(\G)$]{On the Wedderburn decomposition of the total ring of quotients of certain Iwasawa algebras}
\author{Ben Forrás}
\address{University of Ottawa\\
	Department of Mathematics and Statistics \\
    STEM Complex \\
	150 Louis-Pasteur Pvt\\
    Ottawa, ON \\
	Canada K1N 6N5}
\email{bforras@uottawa.ca}
\urladdr{https://bforras.eu}
\subjclass[2020]{16H10, 11R23} % 16H10 = orders in separable algebras, 11R23 = Iwasawa theory
\keywords{Wedderburn~decomposition, Iwasawa algebra, skew~power~series~ring}
\date{Version of 2025-12-16}
\begin{document}

\begin{abstract}
Let $\G\simeq H\rtimes\Gamma$ be the semidirect product of a finite group $H$ and $\Gamma\simeq\ZZ_p$. Let $ F/\QQ_p$ be a finite extension with ring of integers $\OO_ F$. Then the total ring of quotients $\Q^ F(\G)$ of the completed group ring $\OO_{ F}\llb\G\rrb$ is a semisimple ring. 
We determine its Wedderburn decomposition {under a ramification hypothesis} by relating it to the Wedderburn decomposition of the group ring $ F[H]$.
\end{abstract}

\maketitle

\section*{Introduction}
{Let $p$ be an odd prime number.} Let $\G= H\rtimes \Gamma$ be a profinite group that can be written as the semidirect product of a finite normal subgroup $H$ and a pro-$p$ group $\Gamma\simeq\ZZ_p$ isomorphic to the additive group of the $p$-adic integers. (Then $\G$ is a $1$-dimensional $p$-adic Lie group.) Let $ F$ be a finite extension of $\QQ_p$ with ring of integers $\OO_ F$, and consider the completed group ring
$\Lambda^{\OO_ F}(\G)\colonequals \OO_ F\llb \G\rrb$.
Let $n_0$ be a large enough integer such that $\Gamma_0\colonequals \Gamma^{p^{n_0}}$ is central in $\G$. Let
$\Q^ F(\G)\colonequals \Quot(\Lambda^{\OO_ F}(\G))$ be the total ring of quotients of $\Lambda^{\OO_ F}(\G)$.

The ring $\Q^{ F}(\G)$ is semisimple artinian, a fact due to Ritter and Weiss \cite{TEIT-II}; it admits a Wedderburn decomposition
\[\Q^ F(\G) \simeq \bigoplus_{\chi\in \Irr(\G)/\sim_ F} M_{\nFchi}(\DFchi)  .\]
Here $\Irr(\G)$ denotes the set of irreducible characters of $\G$ with open kernel, and the equivalence relation $\sim_ F$ on $\Irr(\G)$ is defined as follows: two characters $\chi,\chi'$ are equivalent if there is a $\sigma\in\Gal( F_\chi/ F)$ such that ${}^\sigma(\res^\G_H \chi)=\res^\G_H \chi'$ where $ F_\chi= F(\chi(h):h\in H)$. For each equivalence class, we have a skew field $\DFchi$.

The aim of this paper is determining the Wedderburn decomposition of $\Q^ F(\G)$, that is, describing the skew field $\DFchi$, its Schur index $\sFchi$, and the size $\nFchi$ of the corresponding matrix ring.

If $\G=H\times\Gamma$ is a \emph{direct} product, then this is a trivial task: the Wedderburn decomposition of $\Q^ F(\G)$ is directly determined by that of the group ring $ F[H]$. Indeed, write
\[ F[H]\simeq\bigoplus_{\eta\in\Irr(H)/\sim_{ F}} M_{\nFeta}(D_\eta)\]
for this Wedderburn decomposition. Then since $\Lambda^{\OO_ F}(\G)= \Lambda^{\OO_ F}(\Gamma)[H]$, we get that
\[\Q^ F(\G)\simeq\bigoplus_{\eta\in\Irr(H)/\sim_{ F}} M_{\nFeta}\left(\Q^ F(\Gamma)\otimes_{ F} D_\eta\right) .\]
The relationship with the decomposition above is given by $\eta=\res^\G_H\chi$, see \cite[Theorem~4.21]{Isaacs}.

The semidirect product case is significantly more difficult. A first step was taken by Lau \cite[Theorem~1]{Lau}, who tackled the case when $\G$ is pro-$p$, that is, when $H$ is a finite $p$-group. Her methods make use of a theorem of Schilling also attributed to Witt and Roquette \cite[(74.15)]{CRII}, which states that in this case, the Schur indices $\sFeta$ of the skew fields $D_\eta$ are all $1$: in other words, each $D_\eta$ is a field. This is not true for general $\G$. Note that the skew fields $D_\chi$ may still have nontrivial Schur indices even when $\G$ is pro-$p$: an example of this phenomenon was computed by Lau \cite[p. 1232ff]{Lau}.

Nickel made strides towards obtaining results in the case of general $\G$ in \cite[\S1]{NickelConductor}. He described the centre of the skew field $\DFchi$, provided a sufficient criterion for a field to be a splitting field thereof, and proved the following divisibilities:
\begin{align*}
	\sFchi &\mid \sFeta ( F(\eta): F_{\chi})  , \\
	\nFeta &\mid \nFchi  .
\end{align*}
Here {$\eta\mid\res^\G_H\chi$ is an irreducible constituent, and} $ F(\eta)= F(\eta(h):h\in H)$: this is a finite extension of $ F_\chi$.

The present work builds upon both of {these} approaches: we generalise Lau's description of the skew fields $\DFchi$, and determine the missing factors in Nickel's divisibilities. We now describe our results.

Let us fix a topological generator $\gamma$ of $\Gamma$. We let $\Gamma$ and $\Gal( F(\eta)/ F)$ act on $\Irr(H)$: for $\eta\in\Irr(H)$, let ${}^{\gamma}\eta(h)=\eta(\gamma h\gamma^{-1})$ and ${}^\sigma\eta(h)=\sigma(\eta(h))$ for all $h\in H$ and $\sigma\in \Gal( F(\eta)/ F)$.
These two actions are related as follows: for $\chi\in\Irr(\G)$ and $\eta\mid\res^\G_H\chi$ an irreducible constituent of its restriction to $H$, we define $\vf$ to be the minimal positive exponent such that $\gamma^{\vf}$ acts as a Galois automorphism of $ F(\eta)/ F$ on $\eta$; it can be shown that $\vf$ only depends on $\chi$. There is a unique automorphism $\tauf\in \Gal( F(\eta)/ F)$ such that 
\[{}^{\gamma^{\vf}}\eta={}^{\tauf}\eta  .\] 
Furthermore, this $\tauf$ fixes $ F_\chi$, {and} generates $\Gal( F(\eta)/ F_{\chi})$. {These assertions will be verified in \cref{Qpeta-Qpchi-cyclic}. Moreover, $\tau$} admits a unique extension as an automorphism of $D_\eta$ of the same order {(\cref{extending-tau-to-D}), which we will also denote by $\tau$.}

Our main result is the following: {
\begin{theorem*} 
	Let $ F$ be a finite extension of $\QQ_p$. {Let $\chi\in \Irr(\G)$ and $\eta\mid \res^\G_H\chi$ be an irreducible constituent. Suppose that the extension $F(\eta)/F_\chi$ is totally ramified.} Then
	\begin{theoremlist}
		\item $\nFchi = \nFeta \vf$,
		\item $\sFchi = \sFeta (F(\eta):F_\chi)$,
		\item $\DFchi \simeq \Quot\left(\OO_{D_\eta}[[X; \tau, \tau-\id]]\right)$.
	\end{theoremlist}
\end{theorem*}
The ring {$\OO_{D_\eta}[[X; \tau, \tau-\id]]$} is the skew power series ring whose underlying additive group agrees with that of the power series ring $\OO_{D_\eta}[[X]]$, with multiplication rule {$Xd=\tau(d)X+(\tau-\id)(d)$} for all $d\in\OO_{D_\eta}$. Such skew power series rings were studied by Schneider and Venjakob \cite{VenjakobWPT,SchneiderVenjakob}.}

{A comment on the condition that $F(\eta)/F_\chi$ be totally ramified is in order. This hypothesis is satisfied in all cases in which the Wedderburn decomposition of $\Q^F(\G)$ had previously been determined: see \cref{rmk:totally-ramified-hypothesis} for details. For the technical details of where this assumption is used, we refer to \cref{rmk:total-ramification-technical}. We shall address the case of arbitrary ramification in forthcoming work.}

The material is organised as follows. \Cref{sec:preliminaries} consists of a collection of background in preexisting work. \Cref{sec:extending-Galois-Deta} details the process of extending a Galois automorphism of a local field to a skew field with this centre. \Cref{sec:skew-power-series} describes the skew power series rings appearing in our main result. \Cref{sec:Wedderburn} contains a study of the extension $ F(\eta)/ F_\chi$ and the precise formulation of the statements above. \Cref{sec:actions} relates the Galois action of $\Gal( F(\eta)/ F_\chi)$ to the group action of $\Gamma$ on $\Irr(H)$. {We prove our main result in \cref{sec:TORC}.}

This paper contains results obtained in Chapters 2, 3 and 5 of the author's doctoral thesis \cite{thesis}.

\subsection*{Acknowledgements} The author wishes to extend his heartfelt gratitude to Andreas Nickel for his guidance and support throughout his doctoral studies, as well as for his many helpful comments on a draft version of this article. He also thanks Alessandro Cobbe for his valuable observations. {Furthermore, he is grateful to the anonymous referee for their very thorough reports and numerous constructive suggestions.}

The results in the aforementioned thesis were obtained while the author was a member of the Research Training Group 2553 at the Essen Seminar for Algebraic Geometry and Arithmetic (ESAGA), funded by the Deutsche Forschungsgemeinschaft (DFG, project no.~412744520).

\subsection*{Notation and conventions} The letter $p$ always stands for an odd prime number.

The word `ring' is short for `not necessarily commutative ring with unity'. A domain is a ring with no zero divisors. If $R$ is a ring, $\Quot(R)$ stands for the total ring of quotients of $R$, which is obtained from $R$ by inverting all central regular elements. When $R$ is an integral domain (i.e. commutative), this is the field of fractions, and to emphasise this, we use the notation $\Frac(R)$ instead.

We will abuse notation by writing $\oplus$ for a direct product of rings, even though this is not a coproduct in the category of rings.
The centre of a group $G$ resp. a ring $R$ is denoted by $\cent(G)$ resp. $\cent(R)$.
An algebraic closure of a field $ F$ is denoted by $ F^\al$.
Overline means either topological closure or residue (skew) field, but never algebraic closure.
For $n\ge1$, $\mu_n$ stands for the group of $n$th roots of unity.

In our notation, we make a clear distinction between rings of (formal) power series and completed group algebras: we use double square brackets for the former and blackboard square brackets for the latter, so $\ZZ_p[[T]]$ is a ring of power series, and $\ZZ_p\llb \Gamma\rrb$ is a completed group algebra.

In \cref{sec:Wedderburn,sec:actions,sec:TORC}, we will fix a base field $ F$. Most of the objects at hand depend on the choice of this field. This is not always reflected in our notation: in order to prevent it from becoming too cumbersome, the field $ F$ is usually suppressed from it. If the choice of the base field is particularly relevant, for example because we are comparing objects coming from different base fields, we attach the base as a superscript to our notation, e.g. writing $D_\chi^ F$ instead of $D_\chi$.

\section{Algebraic preliminaries} \label{sec:preliminaries}
Let $\G$ be a profinite group. For $ F/\QQ_p$ a finite field extension with ring of integers $\OO_ F$, define the Iwasawa algebra of $\G$ over $\OO_ F$ as
\[\Lambda^{\OO_ F}(\G)\colonequals \OO_ F\otimes_{\ZZ_p} \Lambda(\G)\colonequals \OO_ F\otimes_{\ZZ_p}\ZZ_p\llb \G\rrb=\OO_ F\llb\G\rrb  .\]
Let $\Q^ F(\G)\colonequals\Quot(\OO_ F\llb\G\rrb)$ denote its total ring of quotients. Note that {$\Q^ F(\G)= F\otimes_{\QQ_p}\Q^{\QQ_p}(\G)$} by \cite[Lemma~1]{TEIT-II}.

From now on, let $\G=H\rtimes\Gamma$, where $H$ is a finite group and $\Gamma\simeq\ZZ_p$ is isomorphic to the additive group of the $p$-adic integers.
Let $\Gamma_0\colonequals\Gamma^{p^{n_0}}$ where $n_0$ is chosen to be large enough such that $\Gamma_0\subseteq\G$ is central.

As in \cite[\S4.2]{NABS} and in the proof of \cite[Proposition~5]{TEIT-II}, the Iwasawa algebra resp. its total ring of quotients  admit the following decompositions:
\begin{align}
	\Lambda^{\OO_ F}(\G)=\bigoplus_{i=0}^{p^{n_0}-1} \Lambda^{\OO_ F}(\Gamma_0)[H] \gamma^i  , \quad 
	\Q^{ F}(\G)=\bigoplus_{i=0}^{p^{n_0}-1} \Q^{ F}(\Gamma_0)[H] \gamma^i  . \label{eq:Q-decomposition}
\end{align}
Moreover, $\Q^{ F}(\G)$ is a semisimple artinian ring, as shown in \cite[Proposition~5(1)]{TEIT-II}. Our goal is to determine its Wedderburn decomposition. We begin by considering characters and their associated idempotents as follows. By a complex resp. $p$-adic Artin character we shall mean the trace of a Galois representation on a finite dimensional vector space over $\CC$ resp. $\QQ_p^\al$ with open kernel.
Let $\Irr(\G)$ denote the set of absolutely irreducible $p$-adic Artin characters, and let $\chi\in\Irr(\G)$.

Let $\eta\mid\res^\G_H\chi$ be an irreducible constituent of the restriction of $\chi$ to $H$.
{There is a left action of $\G$ on the set of irreducible constituents of $\res^\G_H\chi$ given by ${}^g\eta(h)=\eta(g^{-1} h g)$ for $h\in H$.}
Write $w_\chi\colonequals(\G:\G_\eta)$ where $\G_\eta$ is the stabiliser of $\eta$ in $\G$; this is a power of $p$ since $H$ stabilises $\eta$, and it can be shown that $w_\chi$ only depends on $\chi$.
{Compare the action just defined with the left action of $\Gamma$ defined in the Introduction as ${}^\gamma\eta(h)=\eta(\gamma h\gamma^{-1})$ for $h\in H$. As $\Gamma$ is abelian (as opposed to $\G$), this is also a left action. From now on, we will always consider the action of $\Gamma$ or $\G/\G_\eta$, which is isomorphic to a quotient of $\Gamma$. Later in our computations this will provide the appropriate formula to move powers of $\gamma$ to the right (rather than the left).}

Let $ F_{\chi} \colonequals  F(\chi(h): h\in H)$; this is contained in $ F(\eta)$. The field $ F(\eta)$ is abelian over $ F$, as it is contained in some cyclotomic extension, hence the extension $ F(\eta)/ F_\chi$ is Galois.

Using Clifford theory, one can show -- see \cite[Lemma 1.1]{NickelConductor} -- that there are irreducible constituents $\eta_1,\ldots,\eta_{\sss}$ of the restriction $\res^\G_H\chi$ such that there is a decomposition
\begin{equation} \label{eta-i}
	\res^\G_H\chi = \sum_{i=1}^{\sss} \sum_{\sigma\in\Gal( F(\eta_i)/ F_{\chi})} {}^\sigma \eta_i = \sum_{g\in\G/\G_\eta}{}^g\eta = \sum_{i=0}^{w_\chi-1} {}^{\gamma^i}\eta  .
\end{equation}
This shows that irreducible constituents of $\res^\G_H\chi$ are all $\G$-conjugates of one another. Moreover, it follows that the index $w_\chi$ depends only on $\chi$. Since $H$ is a normal subgroup, this shows that for a given $\chi$, the field $ F(\eta)$ does not depend on the choice of $\eta$. In particular, all degrees $( F(\eta_i): F_\chi)$ are equal. It follows readily that the number $\sss$ satisfies
\begin{equation} \label{wchi-s}
	w_\chi=\sss\cdot( F(\eta): F_{\chi})
\end{equation}
In particular, the degree $( F(\eta): F_{\chi})$ is a power of $p$ because $w_\chi$ is. Note that $w_\chi$ is independent of $ F$. 
The number $\sss$ depends on $\chi$ (and $ F$), but for $\chi$ (and $ F$) fixed, it is independent of the choice of $\eta$, since both $w_\chi$ and $( F(\eta): F_{\chi})$ are.

As in \cite{TEIT-II}, we have the following idempotents:
\begingroup\allowdisplaybreaks
\begin{align*}
	e(\eta) &\colonequals \frac{\eta(1)}{\#H} \sum_{h\in H} \eta(h^{-1}) h \in  F(\eta)[H]  , &
	e_\chi &\colonequals \sum_{g\in\G/\G_\eta} e({}^g\eta) \in  F(\eta)[H]  .
\end{align*}\endgroup
Following \cite[(3)]{NickelConductor}, we also wish to consider analogous idempotents with $ F$-coefficients; this is achieved by taking the respective Galois orbits. For this, note that \eqref{eta-i} implies
\[e_\chi = \frac{\chi(1)}{\# H w_\chi} \sum_{h\in H} \chi(h^{-1}) h \in  F_{\chi}[H], \]
showing that $e_\chi$ in fact has coefficients in $ F_\chi$.
The respective idempotents over $ F$ are then defined as follows.
\begin{align} \label{eq:def-epsilon}
	\epsilon(\eta) &\colonequals \sum_{\sigma\in\Gal( F(\eta)/ F)} e({}^\sigma\eta) \in  F[H]  , &
	\epsilon_\chi &\colonequals \sum_{\sigma\in\Gal( F_{\chi}/ F)} \sigma(e_\chi) \in  F[H]  .
\end{align}
Ritter and Weiss showed \cite[556]{TEIT-II} that every primitive central idempotent of {$\QQ_p^\al\otimes_{\QQ_p}\Q(\G)$} is of the form $e_\chi$; it follows that primitive central idempotents of $\Q^ F(\G)$ are of the form $\epsilon_\chi$.
Finally, we note the following consequence of \eqref{eta-i}:
\begin{align} \label{eq:epsilon-chi-etas}
	\epsilon_\chi &= {\sum_{i=1}^{\sss}} \epsilon(\eta_i)  .
\end{align}

\section{Extending Galois action to skew fields} \label{sec:extending-Galois-Deta}

This section is concerned with the question of extending a Galois automorphism of the centre of a finite dimensional skew field over a local field to the entire skew field, under certain assumptions.

Let $ K$ be a local field. Fix a uniformiser $\pi_ K$ of $ K$, and let $\FF_ q$ be the residue field of $ K$, with $ q$ a power of $p$. Let $ D$ be a skew field with centre $ K$ and index $s$: this admits the following description as a cyclic algebra, see \cite[§14]{MO}.
Let $\omega$ be a primitive $( q^{s}-1)$st root of unity, and let $\sigma\in\Gal( K(\omega)/ K)$ be the Galois automorphism defined by $\sigma(\omega)\colonequals\omega^{ q^{ r}}$, where $ r/s$ is the Hasse invariant of $ D$. Note that $\sigma$ is a generator of this Galois group. Then
\begin{equation} \label{eq:D-cyclic-algebra}
	 D=\big(  K(\omega) /  K, \sigma, \pi_ K \big)=\bigoplus_{i=0}^{s-1}  K(\omega) \pi_{ D}^i  ,
\end{equation}
where $\pi_{ D}^{s}=\pi_ K$ and $\pi_{ D}\omega=\sigma(\omega)\pi_{ D}$.

Let $ K/ k$ be a Galois $p$-extension of local fields, and let $\tau\in\Gal( K/ k)$ be a Galois automorphism of order $d$ (necessarily a $p$-power).
{Let $K^{\langle\tau\rangle}$ be the subfield of $K$ fixed by $\tau$, and write $q_\tau$ for the order of its residue field.}
Furthermore, assume that the index $s\mid( q_\tau-1)$; in particular, $s$ is coprime to $p$.
Our objective is to extend $\tau$ to an automorphism of $ D$ with the same order $d$ as $\tau$: this will be accomplished in \cref{extending-tau-to-D}.

\begin{lemma}
	The extension $ K(\pi_{ D})/ K$ is a cyclic Galois extension, totally ramified of degree $s$.
\end{lemma}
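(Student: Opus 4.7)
The plan is to realise $K(\pi_D)$ as the splitting field of the Eisenstein polynomial $X^s-\pi_K$ and then read off all three properties from this description.

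First I would observe that the subring of $D$ generated by $\pi_D$ over $K$ is commutative: indeed, $\pi_D$ commutes with every element of its centraliser $K$, so $K[\pi_D]$ is a commutative subring of the skew field $D$, hence a field. Thus $K(\pi_D) = K[\pi_D]$ makes sense as a field extension of $K$, and $\pi_D$ is a root of the polynomial $f(X) = X^s - \pi_K \in K[X]$.

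Next, since $f(X)$ has constant term $-\pi_K$ (with $v_K(-\pi_K)=1$) and leading coefficient $1$, it is Eisenstein with respect to the prime $\pi_K$. Eisenstein polynomials are irreducible and produce totally ramified extensions of degree equal to their degree; this already yields $[K(\pi_D):K]=s$ and the total ramification. In particular $\pi_D$ is itself a uniformiser of $K(\pi_D)$.

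It remains to check the extension is cyclic Galois, which boils down to showing $\mu_s \subseteq K$, for then the $s$ distinct roots $\zeta\pi_D$ ($\zeta\in\mu_s$) of $f$ all lie in $K(\pi_D)$ and the Galois group is identified with $\mu_s \cong \ZZ/s\ZZ$ via $\pi_D \mapsto \zeta\pi_D$. Here the hypothesis $s \mid q_\tau - 1$ (which, as the excerpt notes, forces $\gcd(s,p)=1$) enters: since $K/K^{\langle\tau\rangle}$ is a subextension of the Galois $p$-extension $K/k$, its residue degree is a power of $p$, so writing $q_K$ for the residue cardinality of $K$, one has $q_K = q_\tau^{p^a}$ for some $a\ge0$ and hence $q_\tau - 1 \mid q_K - 1$. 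Therefore $s \mid q_K - 1$, so $\mu_s$ embeds into the residue field of $K$, and by Hensel's lemma (applicable because $s$ is coprime to $p$) it lifts to $\mu_s \subseteq \OO_K^\times \subseteq K$.

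No part of this argument is deep; the only thing that needs a small amount of care is the divisibility $q_\tau - 1 \mid q_K - 1$, which I would want to state explicitly, since it is the point at which the hypothesis on $\tau$ (rather than just on $D$) is used.
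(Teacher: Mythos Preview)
Your proof is correct and follows essentially the same approach as the paper: Eisenstein criterion for total ramification of degree $s$, then $\mu_s\subseteq K$ to obtain cyclic Galois. The only cosmetic difference is that the paper observes directly that $\zeta_s\in K^{\langle\tau\rangle}\subseteq K$ from $s\mid q_\tau-1$, whereas you take the small detour through $q_\tau-1\mid q_K-1$ to place $\mu_s$ inside $K$; both are fine.
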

\begin{proof}
	Indeed, $\pi_{ D}$ has minimal polynomial $X^{s}-\pi_ K\in K[X]$, which is Eisenstein, hence the extension is totally ramified. Let $\zeta_{s}$ be a primitive $s$th root of unity: this is contained in $ K^{\langle\tau\rangle}\subseteq K$ because $s\mid ( q_\tau-1)$. The roots of $X^{s}-\pi_ K$ are of the form $\zeta_{s}^i \pi_{ D}$, which are therefore all contained in $ K(\pi_{ D})$. Hence the extension is Galois, and a generator of the Galois group is given by {$\pi_ D\mapsto \zeta_s\pi_ D$}.
\end{proof}

The automorphism $\tau$ preserves the valuation of $ K$, wherefore $\tau(\pi_ K)=\epsilon\cdot\pi_ K$ for some $\epsilon\in\OO_{ K}^\times$. Under the decomposition $\OO_{ K}^\times=\mu_{ q-1}\times U_{ K}^1$, we can write $\epsilon=\zeta\cdot u$ where $\zeta\in\mu_{ q-1}$ and $u\in U_{ K}^1$, where $U^1_ K=\{u\in \OO_{ K}^\times:u\equiv 1\pmod{\pi_ K}\}$ is the group of $1$-units. Let us write $N_{\langle\tau\rangle}$ for the norm map from $ K$ to $ K^{\langle\tau\rangle}$.
\begin{lemma} \label{epsilon-has-N-1} \label{zeta-u-have-N-1}
	The elements $\epsilon$, $\zeta$ and $u$ all have norm $1$ in $ K^{\langle\tau\rangle}$:
	\[1=N_{\langle\tau\rangle}(\epsilon)=N_{\langle\tau\rangle}(\zeta) = N_{\langle\tau\rangle}(u)  .\]
\end{lemma}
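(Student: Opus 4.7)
The plan is to first deduce the norm-one statement for $\epsilon$ from a direct telescoping calculation, and then recover the corresponding statements for $\zeta$ and $u$ by using that $\tau$ respects the decomposition $\OO_K^\times = \mu_{q-1}\times U_K^1$.

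First I would iterate the defining relation $\tau(\pi_K)=\epsilon\cdot\pi_K$. A routine induction gives
\[
\tau^i(\pi_K) = \left(\prod_{j=0}^{i-1}\tau^j(\epsilon)\right)\pi_K
\]
for every $i\ge0$. Setting $i=d$, the order of $\tau$, the left-hand side equals $\pi_K$, while the product on the right is precisely $N_{\langle\tau\rangle}(\epsilon)$. Cancelling $\pi_K$ yields $N_{\langle\tau\rangle}(\epsilon)=1$.

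For the remaining two statements, I would observe that the automorphism $\tau$ preserves both $\mu_{q-1}$ and $U_K^1$: the former because it preserves $\OO_K^\times$ and sends roots of unity to roots of unity of the same order, the latter because $\tau$ acts on $\OO_K$ and on the residue field, so preserves the kernel of reduction. Hence each factor $\tau^j(\zeta)$ lies in $\mu_{q-1}$ and each $\tau^j(u)$ lies in $U_K^1$, so the norms $N_{\langle\tau\rangle}(\zeta)$ and $N_{\langle\tau\rangle}(u)$ land in $\mu_{q-1}$ and $U_K^1$ respectively. Multiplicativity of the norm gives $1=N_{\langle\tau\rangle}(\epsilon)=N_{\langle\tau\rangle}(\zeta)\cdot N_{\langle\tau\rangle}(u)$, and the uniqueness of the decomposition $\OO_K^\times=\mu_{q-1}\times U_K^1$ (which follows from $\mu_{q-1}\cap U_K^1=\{1\}$, since $\gcd(q-1,p)=1$ whereas $U_K^1$ is pro-$p$) forces each factor to be trivial.

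No step looks like a serious obstacle here; the only thing to be slightly careful about is that the decomposition $\OO_K^\times = \mu_{q-1}\times U_K^1$ is genuinely unique and that $\tau$ really does preserve both factors, but this follows at once from general properties of the Teichmüller lift. The argument works uniformly without needing any hypothesis on whether $K/K^{\langle\tau\rangle}$ is ramified or not, so no case distinction is required.
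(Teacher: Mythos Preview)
Your proof is correct and follows essentially the same route as the paper's: obtain $N_{\langle\tau\rangle}(\epsilon)=1$ from the relation $\tau(\pi_K)=\epsilon\pi_K$, then separate the factors using that the norm respects the direct product decomposition $\OO_K^\times=\mu_{q-1}\times U_K^1$. The only cosmetic difference is that the paper phrases the separation step as ``$N_{\langle\tau\rangle}(\zeta)$ is simultaneously a $1$-unit and a prime-to-$p$ root of unity'', whereas you invoke directly that $\tau$ preserves each factor and that the product is direct; these are the same argument.
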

\begin{proof}
	Apply $N_{\langle\tau\rangle}$ to $\tau(\pi_ K)=\epsilon\cdot\pi_ K$: it follows that
	$1=N_{\langle\tau\rangle}(\epsilon)=N_{\langle\tau\rangle}(\zeta) \cdot N_{\langle\tau\rangle}(u).$
	Since $u$ is a $1$-unit, so is its norm, that is, $N_{\langle\tau\rangle}(u)\equiv 1\pmod{\pi_\tau}$, {where $\pi_\tau$ is a uniformiser of $K^{\langle\tau\rangle}$}. Then $N_{\langle\tau\rangle}(\zeta)$ must also be a $1$-unit. On the other hand, $\zeta$ is a root of unity of order prime to $p$, and thus so is its norm. The groups of $1$-units resp. roots of unity in $\OO_{ K^{\langle\tau\rangle}}^\times$ have intersection the $p$-power roots of unity. Therefore $N_{\langle\tau\rangle}(\zeta)=1$, which forces $N_{\langle\tau\rangle}(u)=1$ as well.
\end{proof}

\begin{lemma} \label{zeta-in-mu-smaller}
	The element $\zeta\in\mu_{ q-1}$ has order dividing $\frac{ q-1}{ q_\tau-1}$, that is, 
	$\zeta\in\mu_{( q-1)/( q_\tau-1)}$.
\end{lemma}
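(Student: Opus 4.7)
The plan is to exploit the fact that $\zeta\in\mu_{ q-1}$ is a Teichmüller representative in $\OO_{ K}^\times$: its multiplicative order coincides with that of its reduction $\bar\zeta\in\FF_{ q}^\times$, and the norm $N_{\langle\tau\rangle}(\zeta)=\prod_{i=0}^{d-1}\tau^i(\zeta)$ is again a Teichmüller lift, because $\mu_{ q-1}$ is stable under both products and the Galois action of $\tau$. By \cref{zeta-u-have-N-1} we have $N_{\langle\tau\rangle}(\zeta)=1$, so it will suffice to analyse the reduction of this identity in $\FF_{ q}^\times$.

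Let $e$ and $f$ denote respectively the ramification index and inertia degree of $ K/ K^{\langle\tau\rangle}$, so that $ef=d$ is a power of $p$ and $ q= q_\tau^{f}$. The reduction map yields a surjection $\Gal( K/ K^{\langle\tau\rangle})\twoheadrightarrow\Gal(\FF_{ q}/\FF_{ q_\tau})$ with kernel the inertia subgroup. Since $\Gal( K/ K^{\langle\tau\rangle})$ is cyclic with generator $\tau$, its image is generated by the Frobenius $\bar\tau\colon x\mapsto x^{ q_\tau}$, which has order $f$, and the inertia is cyclic of order $e$, generated by $\tau^{f}$.

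Reducing the identity $N_{\langle\tau\rangle}(\zeta)=1$ modulo $\pi_{ K}$ and applying the above, together with the geometric-series identity $1+ q_\tau+\cdots+ q_\tau^{f-1}=( q-1)/( q_\tau-1)$, gives
\[
1 \;=\; \overline{N_{\langle\tau\rangle}(\zeta)} \;=\; \prod_{i=0}^{d-1}\bar\tau^i(\bar\zeta) \;=\; \left(\prod_{i=0}^{f-1}\bar\zeta^{ q_\tau^i}\right)^{\!e} \;=\; \bar\zeta^{\,e( q-1)/( q_\tau-1)}.
\]
Since $e$ is a power of $p$ while the order of $\bar\zeta$ divides $ q-1$, which is coprime to $p$, the factor of $e$ in the exponent can be cancelled, yielding $\bar\zeta^{( q-1)/( q_\tau-1)}=1$. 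Lifting this back via the Teichmüller character (or equivalently, since $\zeta$ and $\bar\zeta$ have the same order) gives $\zeta^{( q-1)/( q_\tau-1)}=1$, which is the claim.

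I do not anticipate any serious obstacle: the argument is routine once one recognises that $\zeta$ being a Teichmüller lift transfers the problem to the residue field, where the action of $\tau$ becomes an explicit Frobenius power and the norm collapses to a geometric sum.
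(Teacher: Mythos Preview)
Your proof is correct and takes a genuinely different route from the paper. The paper argues cohomologically: the norm map $N_{\langle\tau\rangle}\colon\mu_{q-1}\to\mu_{q_\tau-1}$ has cokernel $\hat H^0(\langle\tau\rangle,\mu_{q-1})$, which vanishes because a $p$-group is acting on a module of prime-to-$p$ order; surjectivity then pins down the kernel as the unique subgroup of the cyclic group $\mu_{q-1}$ of order $(q-1)/(q_\tau-1)$, and \cref{zeta-u-have-N-1} places $\zeta$ in it. You instead reduce modulo $\pi_K$ and compute the norm explicitly on the residue field, where it becomes the $e$th power of $N_{\FF_q/\FF_{q_\tau}}(\bar\zeta)=\bar\zeta^{(q-1)/(q_\tau-1)}$, and then strip off the exponent $e$ by coprimality. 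Your argument is more elementary (no cohomology needed) and makes the exponent $(q-1)/(q_\tau-1)$ appear transparently as a geometric sum; the paper's argument is shorter and explains structurally why the kernel has exactly this size without ever computing the action. One minor point of exposition: the reduction of $\tau$ need not literally be the Frobenius $x\mapsto x^{q_\tau}$, only some generator of $\Gal(\FF_q/\FF_{q_\tau})$; this does not affect your displayed computation, since the $d$-fold product still collapses to $N_{\FF_q/\FF_{q_\tau}}(\bar\zeta)^e$ regardless of which generator one uses.
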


\begin{proof}
	By definition, the cokernel of the norm map $N_{\langle\tau\rangle}$ on $\mu_{ q-1}$ is the $0$th Tate cohomology group $\hat H^0\left(\langle \tau\rangle, \mu_{ q-1} \right) .$
	Since $\langle \tau\rangle$ is a $p$-group, and $\mu_{ q-1}$ has order prime to $p$, this cohomology group vanishes, so $N_{\langle\tau\rangle}$ is surjective onto {$\mu_{q_\tau-1}$, and thus} $\ker N_{\langle\tau\rangle}$ has order $\# \mu_{ q-1} / \# \mu_{ q_\tau-1} .$
	Since $\mu_{ q-1}$ is a cyclic group, the kernel is the unique subgroup of this order, which is $\mu_{( q-1)/( q_\tau-1)}$. \Cref{zeta-u-have-N-1} finishes the proof.
\end{proof}

\begin{lemma} \label{extending-tau-non-uniquely}
	The Galois automorphism $\tau\in\Gal( K/ k)$ admits an extension to an {automorphism $\tilde \tau\in \Aut_{ k}\left( K(\pi_{ D})\right)$. Moreover, there is a unit $\epsilon_D\in\OO_K^\times$ such that $\tilde\tau(\pi_D)=\epsilon_D\pi_D$.}
\end{lemma}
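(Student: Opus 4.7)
The plan is to construct $\tilde\tau$ explicitly by finding an $s$th root of $\epsilon$ inside $\OO_K^\times$, and then defining $\tilde\tau$ by extension of $\tau$ with $\tilde\tau(\pi_D)\colonequals\epsilon_D\pi_D$. Since $K(\pi_D)=K[X]/(X^s-\pi_K)$, such a definition will yield a well-defined $k$-algebra homomorphism provided $(\epsilon_D\pi_D)^s=\tau(\pi_K)$, i.e. provided
\[\epsilon_D^s\pi_K=\epsilon\pi_K,\qquad\text{equivalently}\qquad \epsilon_D^s=\epsilon.\]
So the whole task reduces to producing such an $\epsilon_D\in\OO_K^\times$.

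To do this, I would decompose $\epsilon=\zeta\cdot u$ as before, with $\zeta\in\mu_{q-1}$ and $u\in U_K^1$, and seek an $s$th root of each factor separately. For the unit $u$: since $s\mid q_\tau-1$ and $q_\tau$ is a $p$-power, $s$ is coprime to $p$, and the pro-$p$ group $U_K^1$ therefore admits a unique $s$th root $u_D\in U_K^1$ of $u$ (the $s$th power map is a bijection on a pro-$p$ group when $\gcd(s,p)=1$). For the root of unity $\zeta$: the $s$th power map on the cyclic group $\mu_{q-1}$ has image $\mu_{(q-1)/\gcd(s,q-1)}=\mu_{(q-1)/s}$, using once more that $s\mid q_\tau-1\mid q-1$. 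By \cref{zeta-in-mu-smaller}, $\zeta\in\mu_{(q-1)/(q_\tau-1)}$, which is contained in $\mu_{(q-1)/s}$ precisely because $s\mid q_\tau-1$. Hence $\zeta$ admits an $s$th root $\zeta_D\in\mu_{q-1}$, and taking $\epsilon_D\colonequals\zeta_D u_D$ yields an element of $\OO_K^\times$ with $\epsilon_D^s=\zeta u=\epsilon$, as required.

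With this $\epsilon_D$ in hand, the universal property of $K(\pi_D)=K[X]/(X^s-\pi_K)$ gives a unique $k$-algebra endomorphism $\tilde\tau$ of $K(\pi_D)$ extending $\tau$ on $K$ and sending $\pi_D\mapsto\epsilon_D\pi_D$; the relation $X^s-\pi_K$ is respected exactly because $\epsilon_D^s=\epsilon$. Finally, $\tilde\tau$ is automatically bijective: it is a $k$-linear endomorphism of the finite-dimensional $k$-vector space $K(\pi_D)$, and the images $1,\epsilon_D\pi_D,\ldots,(\epsilon_D\pi_D)^{s-1}$ of the $K$-basis $1,\pi_D,\ldots,\pi_D^{s-1}$ remain $K$-linearly independent since $\epsilon_D\in\OO_K^\times$.

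The only non-formal step is producing $\zeta_D$, and this is the place where \cref{zeta-in-mu-smaller} is used in an essential way; once that divisibility is available, everything else amounts to verifying the universal property. I would expect the resulting construction to be non-unique, as the lemma's label suggests: different choices of $s$th roots $\zeta_D$ (differing by an element of $\mu_s\subseteq K$) yield extensions that differ by composition with an element of $\Gal(K(\pi_D)/K)$, which is presumably why the uniqueness assertion is postponed to the next step (where the automorphism of $D$, rather than of $K(\pi_D)$, is constructed).
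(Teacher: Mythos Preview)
Your proof is correct and follows essentially the same approach as the paper: reduce to finding an $s$th root of $\epsilon$ in $\OO_K^\times$, split $\epsilon=\zeta u$, use that the $s$th power map is bijective on $U_K^1$ since $\gcd(s,p)=1$, and invoke \cref{zeta-in-mu-smaller} together with $s\mid q_\tau-1$ to see that $\zeta\in\mu_{(q-1)/s}$ has an $s$th root in $\mu_{q-1}$. Your added remarks on the universal property and on bijectivity are fine but not strictly needed here.
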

\begin{proof}
	For valuation reasons, {$\tilde\tau(\pi_{ D})$} must be of the form $\epsilon_ D \cdot \pi_{ D}$ for some $\epsilon_ D\in \OO_{ K(\pi_{ D})}^\times$, and it must satisfy {$\tilde\tau(\pi_{ D})^{s}=\tilde\tau(\pi_{ D}^{s})$}. The latter is equivalent to requiring $\epsilon_ D^{s} \cdot \pi_{ K} = \epsilon\cdot \pi_{ K}$, that is, $\epsilon_ D^{s}=\epsilon$. So extending $\tau$ {to a $k$-automorphism $\tilde\tau$} of $ K(\pi_{ D})$ means finding an $s$th root of $\epsilon$.
	
	Since $s$ is coprime to $p$ by assumption, the $s$th-power-map is bijective on $1$-units, so $u$ has an $s$th root. Moreover, we know that $\zeta\in\mu_{( q-1)/( q_\tau-1)}$ and that $s\mid q_\tau-1$, whence $\zeta\in\mu_{( q-1)/s}$, which implies that $\zeta$ also has an $s$th root. So such an $\epsilon_D$ exists, and in fact, this argument shows that it is contained in $ K$.
\end{proof}

The extension {$\tilde\tau$} of $\tau$ to $ K(\pi_{ D})$ as given in \cref{extending-tau-non-uniquely} is not unique due to the choice of $\epsilon_D$. It becomes unique under an additional assumption on the order:

\begin{lemma} \label{extending-tau-uniquely}
	The Galois automorphism $\tau\in\Gal( K/ k)$ admits a unique extension to an element $\tilde\tau\in\Aut_{ k}\left( K(\pi_{ D})\right)$ of order $d$.
\end{lemma}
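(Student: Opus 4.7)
My plan is to leverage the non-unique extension produced in \cref{extending-tau-non-uniquely} and pin down uniqueness by imposing the order-$d$ condition, exploiting the fact that $d$ is a $p$-power while $s$ is coprime to $p$.

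First, I would start with any extension $\tilde\tau$ from \cref{extending-tau-non-uniquely}, so $\tilde\tau(\pi_D) = \epsilon_D \pi_D$ with $\epsilon_D \in K$ and $\epsilon_D^s = \epsilon$. Any other extension is of the form $\zeta \epsilon_D$ for some $\zeta \in \mu_s \subseteq K^{\langle\tau\rangle}$, since two $s$th roots of $\epsilon$ differ by an $s$th root of unity, and these are all contained in $K^{\langle\tau\rangle}$ by virtue of $s \mid q_\tau - 1$.

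Next, I would compute the iterated action: since $\tilde\tau|_K = \tau$, an inductive unravelling gives
\[\tilde\tau^d(\pi_D) = \left(\prod_{i=0}^{d-1} \tau^i(\epsilon_D)\right) \pi_D = N_{\langle\tau\rangle}(\epsilon_D) \cdot \pi_D  .\]
Because $\tau^d = \id$ on $K$, the condition $\tilde\tau^d = \id$ on $K(\pi_D)$ reduces to $N_{\langle\tau\rangle}(\epsilon_D) = 1$. Applying the norm to $\epsilon_D^s = \epsilon$ and using \cref{epsilon-has-N-1} shows $N_{\langle\tau\rangle}(\epsilon_D)^s = 1$, so $N_{\langle\tau\rangle}(\epsilon_D) \in \mu_s$.

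The key observation — and the only place requiring any care — is that replacing $\epsilon_D$ by $\zeta \epsilon_D$ for $\zeta \in \mu_s$ rescales the norm by $\zeta^d$, since $\zeta$ lies in $K^{\langle\tau\rangle}$. So I need to find a unique $\zeta \in \mu_s$ with $\zeta^d = N_{\langle\tau\rangle}(\epsilon_D)^{-1}$. Since $d$ is a $p$-power and $s$ is coprime to $p$, the $d$th-power map on the cyclic group $\mu_s$ is a bijection, yielding exactly one such $\zeta$. This proves both existence and uniqueness of an extension $\tilde\tau$ satisfying $\tilde\tau^d = \id$. Finally, $\tilde\tau$ has order exactly $d$ rather than a proper divisor, because its restriction to $K$ is $\tau$, which already has order $d$.

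The only potential obstacle is bookkeeping: I must be sure that $\zeta \in \mu_s$ is indeed fixed by $\tau$ (so that $N_{\langle\tau\rangle}(\zeta\epsilon_D) = \zeta^d N_{\langle\tau\rangle}(\epsilon_D)$), which follows from $\mu_s \subseteq K^{\langle\tau\rangle}$, and that $\gcd(d, s) = 1$, which is immediate from the hypotheses. No deeper input is needed.
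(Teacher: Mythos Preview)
Your proposal is correct and follows essentially the same approach as the paper: both reduce the order-$d$ condition to $N_{\langle\tau\rangle}(\epsilon_D)=1$, observe that $N_{\langle\tau\rangle}(\epsilon_D)\in\mu_s$ via \cref{epsilon-has-N-1}, and then use bijectivity of the $d$th-power map on $\mu_s$ (from $\gcd(d,s)=1$ and $\mu_s\subseteq K^{\langle\tau\rangle}$) to adjust $\epsilon_D$ by the unique suitable $s$th root of unity. The paper separates existence and uniqueness into two paragraphs whereas you handle them simultaneously, and you additionally remark that the order is exactly $d$ (not a proper divisor) because $\tilde\tau|_K=\tau$ already has order $d$---a point the paper leaves implicit.
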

\begin{proof}
	Since $\tilde\tau^{d} (\pi_{ D})=N_{\langle\tau\rangle}(\epsilon_{ D})\pi_ D$, such an extension has order $d$ if and only if $N_{\langle\tau\rangle}(\epsilon_{ D}) = 1$.
	
	We first show uniqueness. Suppose $\epsilon_{ D},\epsilon_{ D}'\in\OO_ K^\times$ are both $s$th roots of $\epsilon$ with $\tau$-norm $1$. Then $(\epsilon_{ D}'/\epsilon_ D)^s=1$ implies $\epsilon_{ D}'=\xi \epsilon_{ D}$ for some $\xi\in\mu_s( K)$, and it follows that $N_{\langle\tau\rangle}(\xi)=1$. 
	Since $s\mid( q_\tau-1)$, we have $\mu_s( K)=\mu_s( K^{\langle\tau\rangle})$. In particular, $\xi\in  K^{\langle\tau\rangle}$, and so $N_{\langle\tau\rangle}(\xi)=\xi^d$. The $d$th power map is bijective on $\mu_s( K)$, hence $\xi=1$, which proves uniqueness.
	
	Now we turn to existence. Let $\epsilon_{ D}\in\OO_ K^\times$ be an element as in \cref{extending-tau-non-uniquely}. From \cref{epsilon-has-N-1} we have $1=N_{\langle\tau\rangle}(\epsilon)=N_{\langle\tau\rangle}(\epsilon_ D)^{s}$, so $N_{\langle\tau\rangle}(\epsilon_ D)=\zeta_{s}^a$ for some $a$. Since $d$ is coprime to $s$, the $d$th power map is bijective on $\mu_s( K)$, and so there exists a unique $s$th root of unity $\tilde\zeta\in\mu_s( K^{\langle\tau\rangle})$ such that $\tilde\zeta^{d}=\zeta_{s}^a$. Then $N_{\langle\tau\rangle}\!\left(\tilde\zeta^{-1} \cdot\epsilon_ D\right)= \tilde\zeta^{-d} \cdot \zeta_{s}^a = 1.$
	Replacing the given $\epsilon_ D$ by $\tilde\zeta^{-1} \cdot\epsilon_ D$, we get the unique root of $\epsilon$ which makes the norm condition hold.
\end{proof}

{From now on, $\tilde\tau$ denotes the unique degree $d$ extension of $\tau$ from \cref{extending-tau-uniquely}.}

\begin{lemma} \label{tau-ramified-Galois}
	$ K(\pi_{ D})/ K^{\langle\tau\rangle}$ is a Galois extension with Galois group $\langle\tilde\tau\rangle\times\Gal( K(\pi_ D)/ K)$.
\end{lemma}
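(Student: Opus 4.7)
The plan is to exhibit the asserted Galois group as an internal direct product inside $\Aut_{K^{\langle\tau\rangle}}(K(\pi_D))$, and then count to conclude that the extension is Galois. By the tower formula together with the previous lemma,
\[[K(\pi_D):K^{\langle\tau\rangle}] = [K(\pi_D):K]\cdot [K:K^{\langle\tau\rangle}] = s\cdot d .\]
Both $\tilde\tau$ (by \cref{extending-tau-uniquely}) and every element of $\Gal(K(\pi_D)/K)$ fix $K^{\langle\tau\rangle}$ pointwise, so $\langle\tilde\tau\rangle$ and $\Gal(K(\pi_D)/K)$ are subgroups of $\Aut_{K^{\langle\tau\rangle}}(K(\pi_D))$, of orders $d$ and $s$ respectively. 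Since $d$ is a power of $p$ and $s$ is coprime to $p$ by our standing hypothesis $s\mid q_\tau-1$, these orders are coprime.

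The next step is to verify that $\tilde\tau$ commutes with a generator $\sigma\in\Gal(K(\pi_D)/K)$, where by the preceding lemma we may take $\sigma(\pi_D)=\zeta_s\pi_D$ for some primitive $s$th root of unity $\zeta_s$. On $K$ the two automorphisms clearly commute because $\sigma|_K=\id$. On $\pi_D$, the key observation is that $s\mid q_\tau-1$ forces $\zeta_s\in K^{\langle\tau\rangle}$, so that $\tilde\tau(\zeta_s)=\zeta_s$; combined with $\epsilon_D\in K$ (so that $\sigma(\epsilon_D)=\epsilon_D$), one computes
\[\tilde\tau(\sigma(\pi_D)) = \tilde\tau(\zeta_s\pi_D) = \zeta_s\epsilon_D\pi_D = \sigma(\epsilon_D\pi_D) = \sigma(\tilde\tau(\pi_D)) .\]
Hence $\tilde\tau$ and $\sigma$ commute on a set of generators of $K(\pi_D)$ over $K^{\langle\tau\rangle}$, and thus everywhere.

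Finally, the intersection $\langle\tilde\tau\rangle\cap\Gal(K(\pi_D)/K)$ is trivial: if $\tilde\tau^i$ fixes $K$, then $\tau^i=\id$, whence $d\mid i$. Therefore $\langle\tilde\tau\rangle\cdot\Gal(K(\pi_D)/K)$ is an internal direct product inside $\Aut_{K^{\langle\tau\rangle}}(K(\pi_D))$ of order $sd=[K(\pi_D):K^{\langle\tau\rangle}]$. Since the number of $K^{\langle\tau\rangle}$-automorphisms of $K(\pi_D)$ equals its degree, the extension is Galois with the stated Galois group. The only real subtlety in this argument is the commutativity check, which rests crucially on the hypothesis $s\mid q_\tau-1$; everything else is purely formal.
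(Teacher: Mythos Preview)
Your proof is correct and follows essentially the same approach as the paper's: exhibit $\langle\tilde\tau\rangle$ and $\Gal(K(\pi_D)/K)$ as commuting subgroups of $\Aut_{K^{\langle\tau\rangle}}(K(\pi_D))$ with trivial intersection, then count to conclude Galois. Your commutativity check is slightly more explicit than the paper's (you spell out both $\tilde\tau(\zeta_s)=\zeta_s$ and $\sigma(\epsilon_D)=\epsilon_D$), and you argue trivial intersection via restriction to $K$ rather than directly via coprimality of orders, but these are cosmetic differences.
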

\begin{proof}
	On the one hand, the extension $\tilde\tau$ defined in \cref{extending-tau-uniquely} generates a subgroup $\langle\tilde\tau\rangle\subseteq\Aut_{ K^{\langle\tau\rangle}}\left( K(\pi_{ D})\right)$ of order $d$. The subgroup $\Gal( K(\pi_{ D})/ K)\subseteq\Aut_{ K^{\langle\tau\rangle}}\left( K(\pi_{ D})\right)$
	has order $s$, which is coprime to $d$. Consequently, the intersection of these two subgroups is trivial. 
	An automorphism in $\Gal( K(\pi_ D)/ K)$ maps $\pi_ D\mapsto \zeta_s^i\pi_ D$ for some $i$, hence $\tilde\tau$ commutes with all such automorphisms.
	We conclude that there is an embedding
	\[\langle\tilde\tau\rangle\times\Gal\left( K(\pi_{ D})/ K\right)\hookrightarrow \Aut_{ K^{\langle\tau\rangle}}\left( K(\pi_{ D})\right).\]
	The group on the right has order at most $\left( K(\pi_{ D}):  K^{\langle\tau\rangle}\right) = \#\langle\tilde\tau\rangle\cdot\#\Gal\left( K(\pi_{ D})/ K\right)$, so the embedding above is surjective, and the claim follows.
\end{proof}

\begin{proposition} \label{extending-tau-to-D}
	Let $ K/ k$ be a Galois $p$-extension of local fields, and let $\tau\in\Gal( K/ k)$ have order $d$. Let $ D$ be a skew field with centre $ K$ and Schur index $s$, and assume that $s\mid( q_\tau-1)$.
	Then there is a unique extension of $\tau$ to an element $\uptau\in\Aut_{ k} ( D)$ of order $d$.
\end{proposition}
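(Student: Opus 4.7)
The plan is to work with the cyclic algebra description $D = \bigoplus_{i=0}^{s-1} K(\omega)\pi_D^i$ from~\eqref{eq:D-cyclic-algebra}, extending $\tau$ separately to the subfield $K(\omega)$ and to the element $\pi_D$, and then combining these to an automorphism of $D$.

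First, I would extend $\tau$ to $K(\omega)$. The composite $K(\omega)/K^{\langle\tau\rangle}$ is Galois of degree $ds$, and its Galois group is abelian: for any lift $\tilde\tau$ of $\tau$, the conjugation $\tilde\tau\sigma\tilde\tau^{-1}$ acts on $\omega$ as $\omega\mapsto\omega^{q^r}$ because both $\tilde\tau$ and $\sigma$ act on the cyclic group $\mu_{q^s-1}$ by commuting power maps, so it equals $\sigma$. Using $\gcd(d,s)=1$ (as $s\mid q_\tau-1$ is coprime to $p$ while $d$ is a $p$-power), Schur--Zassenhaus yields a direct product splitting $\Gal(K(\omega)/K^{\langle\tau\rangle})=\langle\sigma\rangle\times\langle\hat\tau\rangle$, uniquely identifying the order-$d$ lift $\hat\tau$ of $\tau$; by construction $\hat\tau$ commutes with $\sigma$.

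Next, \cref{extending-tau-uniquely} provides the unique order-$d$ extension $\tilde\tau$ of $\tau$ to $K(\pi_D)$, with $\tilde\tau(\pi_D)=\epsilon_D\pi_D$ and $\epsilon_D\in K^\times$. I would then define $\uptau\colon D\to D$ by
\[ \uptau\left(\sum_i a_i\pi_D^i\right) \colonequals \sum_i \hat\tau(a_i)(\epsilon_D\pi_D)^i \qquad (a_i\in K(\omega)) .\]
The defining cyclic-algebra relations are preserved: $(\epsilon_D\pi_D)^s=\epsilon_D^s\pi_K=\epsilon\pi_K=\tau(\pi_K)$ by centrality of $\epsilon_D$, and the cross-relation $\pi_D\omega=\sigma(\omega)\pi_D$ is preserved because $\hat\tau$ commutes with $\sigma$. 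Hence $\uptau$ is a well-defined $k$-algebra endomorphism of the skew field $D$, thus an automorphism. Its order is exactly $d$ because $\uptau^d$ fixes both generators $\omega$ and $\pi_D$ (via $\hat\tau^d=\id$ and $\tilde\tau^d=\id$) and $\uptau|_K=\tau$ has order $d$.

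For uniqueness, let $\uptau'\in\Aut_k(D)$ be any order-$d$ extension of $\tau$. The image $\uptau'(\omega)$ is a $(q^s-1)$st root of unity in $D$; all such roots lie in $K(\omega)$ via the Teichmüller embedding into the unique maximal $\OO_K$-order of $D$. Hence $\uptau'$ preserves $K(\omega)$, and the first step forces $\uptau'|_{K(\omega)}=\hat\tau$. Writing $\uptau'(\pi_D)=w\pi_D$ with $w\in\OO_D^\times$, the cross-relation forces $w$ to centralize $K(\omega)$, so $w\in K(\omega)^\times$ by the double centralizer theorem. The main obstacle is to show that the combined norm conditions $N_{K(\omega)/K}(w)=\epsilon$ (from $(w\pi_D)^s=\tau(\pi_K)$) and $N_{K(\omega)/K(\omega)^{\langle\hat\tau\rangle}}(w)=1$ (from $\uptau'^d(\pi_D)=\pi_D$) force $w=\epsilon_D\in K^\times$, which then reduces the uniqueness to \cref{extending-tau-uniquely}; I expect this to be handled by a two-variable Hilbert~90 argument exploiting the direct-product decomposition of $\Gal(K(\omega)/K^{\langle\tau\rangle})$ from the first step together with the Teichmüller decomposition of $\OO_{K(\omega)}^\times$.
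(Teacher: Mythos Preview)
Your existence argument is essentially the paper's: both obtain the order-$d$ lift $\hat\tau$ of $\tau$ to $K(\omega)$ from the direct-product decomposition $\Gal(K(\omega)/K^{\langle\tau\rangle})\simeq\Gal(K(\omega)/K)\times\Gal(K/K^{\langle\tau\rangle})$ (you phrase this via Schur--Zassenhaus, the paper via coprimality of $s$ and $d$; the content is identical), combine it with $\tilde\tau$ from \cref{extending-tau-uniquely} on $\pi_D$, and check that the cyclic-algebra relations are preserved. Your verification is more explicit than the paper's one-line ``it is easily seen''.

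For uniqueness you are more careful than the paper (which only says ``it is clear from \cref{extending-tau-uniquely}''), and you correctly reduce to showing that the two norm conditions $N_{K(\omega)/K}(w)=\epsilon$ and $N_{\langle\hat\tau\rangle}(w)=1$ pin down $w\in K(\omega)^\times$ uniquely. However, your expected Hilbert~90 argument cannot succeed, because the uniqueness assertion is \emph{false} as stated. Take $p=3$, $k=\QQ_3$, $K/k$ the unramified cubic extension (so $q=27$, $q_\tau=3$, $d=3$), and $s=2$, so $D$ is the quaternion division algebra over $K$ and $\omega$ has order $q^s-1=728$. Choosing $\pi_K\in k$ gives $\epsilon=\epsilon_D=1$, hence $\uptau(\pi_D)=\pi_D$. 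Here $\sigma(\omega)=\omega^{27}$ and $\hat\tau(\omega)=\omega^{81}$, so $w'\colonequals\omega^{-104}$ satisfies $N_{K(\omega)/K}(w')=\omega^{-104\cdot 28}=1$ and $N_{\langle\hat\tau\rangle}(w')=\omega^{-104\cdot(1+81+81^2)}=\omega^{-104\cdot 91}=1$ (as $104\cdot 91=13\cdot 728$), yet $w'$ has order $7$, so $w'\ne\epsilon_D$. Concretely, $\uptau'(x)\colonequals\omega^{4}\,\uptau(x)\,\omega^{-4}$ is a $k$-algebra automorphism of $D$ restricting to $\tau$ on $K$, with $\uptau'(\pi_D)=\omega^{-104}\pi_D\ne\pi_D$ and $(\uptau')^3(x)=\omega^{364}\,x\,\omega^{-364}=(-1)x(-1)^{-1}=x$, so $\uptau'$ is a second order-$3$ extension of $\tau$. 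Thus both your proposed resolution and the paper's brief justification have a genuine gap; the proposition as stated needs an extra hypothesis (for instance that $\uptau$ preserve $K(\pi_D)$, under which \cref{extending-tau-uniquely} does give uniqueness directly) or should assert only existence together with the explicit construction.
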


\begin{proof}
	The field extension $ K(\omega)/ K$ is unramified of degree $s$, and $s$ is coprime to $d$, there is an isomorphism
	\begin{align*}
		\Gal\left( K(\omega) /  K^{\langle\tau\rangle}\right) &\simeq \Gal\left( K(\omega)/ K\right) \times \Gal\left( K/ K^{\langle\tau\rangle}\right)  .
	\end{align*}
	We let $\hat\tau$ be the Galois automorphism corresponding to the pair $(\id,\tau)$. Note that this does \emph{not} mean that $\hat\tau$ acts trivially on $\omega$.
	
	Define an extension $\uptau$ of $\tau$ to $ D$ by setting $\uptau(\omega)\colonequals \hat\tau(\omega)$ and $\uptau(\pi_{ D})\colonequals \tilde\tau(\pi_{ D}) = \epsilon_ D \pi_{ D}$. It is easily seen that this defines a homomorphism, and it is clear from \cref{extending-tau-uniquely} that $\uptau$ has the prescribed order and that it is unique.
\end{proof}
{In the sequel, we simply write $\tau$ for $\uptau$.}

\section{Skew power series rings} \label{sec:skew-power-series}
We retain the notation of \cref{sec:extending-Galois-Deta}. For simplicity, we will assume that $k=K^{\langle\tau\rangle}$. In this section, we study the skew power series ring $\OO_ D[[X;\tau,\delta]]$ where $\delta\colonequals\tau-\id$. Formal skew power series rings {have been studied in general by Schneider and Venjakob \cite{VenjakobWPT,SchneiderVenjakob}.}

\subsection{Well-definedness and basic lemmata} \label{sec:props-skew}
Let $\OO_{ D}$ denote the unique maximal $\OO_{ K}$-order in $ D$, {and let $\delta\colonequals \tau-\id$. The skew power series ring $\OO_{ D}[[X;\tau,\delta]]$ has the same underlying additive group as the ring of formal power series $\OO_D[[X]]$, with multiplication defined by the formula $Xd=\tau(d) X+\delta(d)$ for all $d\in\OO_D$.}

{To see that this gives rise to a well-defined ring structure, we note the following.
The ring $\OO_D$ is a noetherian pseudocompact ring in the sense of \cite[351]{SchneiderVenjakob}, and $\tau$ is a topological automorphism of it by \cref{extending-tau-non-uniquely}. The map $\delta:\OO_D\to\OO_D$ is continuous, and it commutes with $\tau$, that is, $\tau\circ\delta=\delta\circ\tau$. Moreover, $\delta$ is a left $\tau$-derivation, that is,
\[\forall a,b\in \OO_D:\quad \delta(ab)=\delta(a)b+\tau(a)\delta(b),\]
and it is $\tau$-nilpotent in the sense of \cite[354]{SchneiderVenjakob}.
For $\tau$ and $\delta$ possessing these properties, well-definedness of skew power series rings in general is shown in \cite[§§0--1]{SchneiderVenjakob}. An explicit proof of well-definedness of $\OO_{ D}[[X;\tau,\delta]]$ is given in \cite[\S3.3.1]{thesis}.}

We record the following identities for later use. In both cases, the proof is by induction and using the definition of $\delta$.
\begin{lemma} For $n\ge 0$ and $d\in\OO_{ D}$:
\begin{lemmalist}
    \item \label{delta-n} $\delta^n(d)=\sum_{\ell=0}^n (-1)^{n-\ell}\binom{n}{\ell}\tau^\ell(d)$,
    \item \label{Xnd} $X^n d = \sum_{i=0}^n \binom{n}{i} \tau^i \delta^{n-i}(d) X^i $. \qed
\end{lemmalist}
\end{lemma}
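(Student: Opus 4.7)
The plan is to handle both identities by straightforward induction on $n$, with the base case $n=0$ trivial in each (both sides reduce to $d$).

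For part (i), the inductive step I would set up starts from the identity $\delta^{n+1}(d)=\tau(\delta^n(d))-\delta^n(d)$, which is immediate from $\delta=\tau-\id$. Applying the inductive hypothesis and distributing $\tau$ across the sum---legal because $\tau$ is a ring homomorphism---yields
\[\delta^{n+1}(d) = \sum_{\ell=0}^n(-1)^{n-\ell}\binom{n}{\ell}\tau^{\ell+1}(d) - \sum_{\ell=0}^n(-1)^{n-\ell}\binom{n}{\ell}\tau^\ell(d).\]
Shifting the index $\ell\mapsto\ell-1$ in the first sum and applying Pascal's rule $\binom{n}{\ell-1}+\binom{n}{\ell}=\binom{n+1}{\ell}$ will give the claimed formula at level $n+1$, with the boundary terms at $\ell=0$ and $\ell=n+1$ read off separately using $\binom{n+1}{0}=\binom{n+1}{n+1}=1$.

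For part (ii), I would write $X^{n+1}d=X\cdot(X^n d)$, apply the inductive hypothesis to $X^n d$, and then push the leading $X$ past each summand $\tau^i\delta^{n-i}(d)X^i$ using the defining relation $Xe=\tau(e)X+\delta(e)$. The commutativity $\tau\circ\delta=\delta\circ\tau$ recorded just before the lemma is crucial here, since it identifies $\delta(\tau^i\delta^{n-i}(d))$ with $\tau^i\delta^{n+1-i}(d)$; without it, one could not combine the $X$-monomials cleanly. The resulting expression again decomposes as a sum of two pieces, and the same Pascal-style regrouping $\binom{n}{j-1}+\binom{n}{j}=\binom{n+1}{j}$ concludes the induction.

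Neither step presents any real obstacle; the only care required is the combinatorial bookkeeping with signs and boundary indices, plus the small observation that commutativity of $\tau$ and $\delta$ is what makes the two exponents in (ii) interact as a genuine binomial sum. Once these ingredients are in place, both inductions run in parallel with no further input beyond the defining relation $Xd=\tau(d)X+\delta(d)$ of the skew power series ring.
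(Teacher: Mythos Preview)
Your proposal is correct and matches the paper's approach exactly: the paper simply states that both identities follow by induction using the definition of $\delta$, which is precisely the argument you outline. Your explicit mention of the commutativity $\tau\circ\delta=\delta\circ\tau$ for part (ii) is the one point worth noting, and the paper does record this fact just before the lemma as you observed.
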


\subsection{The centre}
We will show that the centre of the skew power series ring $\OO_{ D}[[X;\tau,\tau-\id]]$ is the power series ring $\OO_{ k}[[(1+X)^{( K: k)}-1]]$. One containment can be seen through a straightforward albeit nontrivial computation, whereas the converse requires embedding the skew power series ring into a matrix ring, using ideas going back to Hasse \cite[Satz~40]{Hasse}.

\begin{proposition} \label{centre-of-skew-power-series-ring}
	The centre of $\OO_{ D}[[X;\tau,\tau-\id]]$ contains $\OO_{ k}[[(1+X)^{( K: k)}-1]]$.
\end{proposition}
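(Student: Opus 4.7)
The strategy hinges on rewriting the commutation relation in a more symmetric form. The rule $Xd=\tau(d)X+(\tau-\id)(d)=\tau(d)X+\tau(d)-d$ can be rearranged as
\[
(1+X)d=\tau(d)(1+X) \qquad (d\in\OO_D).
\]
A routine induction on $n\ge 0$ then yields $(1+X)^n d=\tau^n(d)(1+X)^n$ for every $d\in\OO_D$.

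Next I would exploit the standing assumption $k=K^{\langle\tau\rangle}$, which gives $(K:k)=\ord(\tau|_K)=d$. By \cref{extending-tau-to-D}, the extension of $\tau$ to $D$ still has order $d=(K:k)$, so $\tau^{(K:k)}=\id_D$. Substituting $n=(K:k)$ into the inductive identity shows that $(1+X)^{(K:k)}$ commutes with every $d\in\OO_D$, and since it is a polynomial in $X$ it trivially commutes with $X$ as well. Therefore
\[
(1+X)^{(K:k)}-1 \in \cent\bigl(\OO_D[[X;\tau,\tau-\id]]\bigr).
\]
Separately, every $c\in\OO_k$ is fixed by $\tau$, so $\delta(c)=0$ and the defining relation degenerates to $Xc=cX$; hence $\OO_k$ is also central.

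To conclude, I would package these two facts into the full statement by observing that $(1+X)^{(K:k)}-1$ has zero constant term. The formal substitution $T\mapsto (1+X)^{(K:k)}-1$ therefore defines a ring homomorphism
\[
\OO_k[[T]]\longrightarrow \OO_D[[X;\tau,\tau-\id]],
\]
with convergence in the $X$-adic filtration ensured by $((1+X)^{(K:k)}-1)^n\in X^n\OO_D[[X]]$. Since both $\OO_k$ and $(1+X)^{(K:k)}-1$ lie in the centre, and the centre is a closed subring, the image of this homomorphism is contained in $\cent(\OO_D[[X;\tau,\tau-\id]])$, which is exactly the claim.

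I do not expect any real obstacle: the proof is entirely conceptual once the identity $(1+X)d=\tau(d)(1+X)$ is spotted, and it neatly reduces centrality of a whole power series ring to the single computation that $\tau^{(K:k)}=\id$ on $D$. The only point requiring minor care is checking that the substitution map is well-defined as a ring homomorphism, which follows from the positive $X$-adic order of $(1+X)^{(K:k)}-1$.
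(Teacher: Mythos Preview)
Your proof is correct and considerably more elegant than the paper's. The paper establishes the identity $(1+X)^{(K:k)}d=d(1+X)^{(K:k)}$ by brute force: it expands both sides via the binomial theorem, invokes the explicit formul\ae{} $X^n d=\sum_i\binom{n}{i}\tau^i\delta^{n-i}(d)X^i$ and $\delta^n(d)=\sum_\ell(-1)^{n-\ell}\binom{n}{\ell}\tau^\ell(d)$ from \cref{delta-n,Xnd}, and then verifies coefficient-by-coefficient equality using two nontrivial binomial identities (the subset-of-a-subset identity and a Vandermonde-type alternating sum). Your observation that the defining relation rewrites as $(1+X)d=\tau(d)(1+X)$ collapses all of this to a one-line induction followed by the single fact that the extension of $\tau$ to $D$ still has order $(K:k)$, which is exactly what \cref{extending-tau-to-D} provides under the standing hypothesis $k=K^{\langle\tau\rangle}$.

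What your approach buys is conceptual clarity: it makes transparent \emph{why} the exponent is $(K:k)$ rather than deducing it from a miraculous cancellation of binomial coefficients. It also explains the later appearance of $(1+X)$ as the natural variable (e.g.\ in \cref{Dchi-Quot}, where $\gcp\mapsto 1+X$). The paper's computation, by contrast, never isolates the identity $(1+X)d=\tau(d)(1+X)$ and so obscures this structure. The only small point worth making explicit in your write-up is that the centre is closed in the $X$-adic topology (this holds because the ring is Hausdorff and the centraliser of any element is closed), so that the image of the substitution map---an $X$-adic limit of central partial sums---is indeed central.
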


\begin{proof}[Proof of \cref{centre-of-skew-power-series-ring}]
	The automorphism $\tau$ fixes $ k$, therefore every element of $\OO_{ k}$ is central. It remains to show that for all $d\in\OO_{ D}$,
	\[(1+X)^{( K: k)} d = d (1+X)^{( K: k)}\]
	We expand both sides using the binomial theorem, and show that the respective coefficients agree. The right hand side is
	\begin{equation}
		d (1+X)^{( K: k)} = \sum_{i=0}^{( K: k)} d \binom{( K: k)}{i} X^i  . \label{Xd-RHS}
	\end{equation}
	We compute the left hand side. First, by \cref{Xnd}, we get
	\begin{align}
		(1+X)^{( K: k)} d &= \sum_{j=0}^{( K: k)} \binom{( K: k)}{j} X^j d = \sum_{i=0}^{( K: k)} \sum_{j=i}^{( K: k)} \binom{( K: k)}{j} \binom{j}{i} \tau^i \delta^{j-i}(d) X^i  . \label{Xd-LHS}
	\end{align}
	
	The coefficient of $X^i$ on the right hand side of \eqref{Xd-LHS} is, by \cref{delta-n},
	\begin{align*}
		\sum_{j=i}^{( K: k)} \binom{( K: k)}{j} \binom{j}{i} \tau^i\delta^{j-i}(d) = \sum_{\ell=0}^{( K: k)-i} \left(\sum_{j=i+\ell}^{( K: k)} (-1)^{j-i-\ell} \binom{( K: k)}{j} \binom{j}{i} \binom{j-i}{\ell}\right) \tau^{i+\ell}(d)  ,
	\end{align*}
	We claim that the coefficient in the brackets is
	\begin{equation} \label{eq:j-i-ell-coeffs}
		\sum_{j=i+\ell}^{( K: k)} (-1)^{j-i-\ell} \binom{( K: k)}{j} \binom{j}{i} \binom{j-i}{\ell}=\begin{cases}
			\binom{( K: k)}{i} & \text{if $\ell=( K: k)-i$} , \\
			0 & \text{otherwise} .
		\end{cases}
	\end{equation}
	For this, recall the following two identities for binomial coefficients, see e.g. \cite[§I.2.6, (20)]{Knuth} and \cite[§I.2.6, (23)]{Knuth}, respectively:
	\begin{align*}
	    \binom{( K: k)}{j} \binom{j}{i} &= \binom{( K: k)}{i} \binom{( K: k)-i}{j-i} & 0\le i,j \le (K:k),\\
        \sum_{m\in\ZZ} (-1)^{r-m} \binom{r}{m}\binom{m+t}{n} &= \binom{t}{n-r} & \forall r, n\in \ZZ, \, r\ge0, \forall t\in\RR.
	\end{align*}
	Using the first identity, and then applying the second one with $t\colonequals0$, the left hand side of \eqref{eq:j-i-ell-coeffs} becomes
	\begin{align*}
		\sum_{j=i+\ell}^{( K: k)} (-1)^{j-i-\ell} \binom{( K: k)}{j} \binom{j}{i} \binom{j-i}{\ell} &= \binom{( K: k)}{i} \binom{0}{\ell-(( K: k)-i)}  .
	\end{align*}
	The second binomial coefficient here is $1$ if $\ell=( K: k)-i$ and zero otherwise, which proves \eqref{eq:j-i-ell-coeffs}.
	
	Therefore the coefficient of $X^i$ on the right hand side of \eqref{Xd-LHS} is
	\[\binom{( K: k)}{i} \tau^{( K: k)}(d)=\binom{( K: k)}{i} d  , \]
	using that $\tau$ has order $( K: k)$. This agrees with the coefficient of $X^i$ in \eqref{Xd-RHS} for all $i$.
\end{proof}

\begin{corollary} \label{full-centre-of-skew-power-series-ring}
	The centre of $\OO_{ D}[[X;\tau,\tau-\id]]$ is $\OO_{ k}[[(1+X)^{( K: k)}-1]]$. Moreover, the skew field $\Quot(\OO_{ D}[[X;\tau,\tau-\id]])$ has Schur index $( K: k) s$.
\end{corollary}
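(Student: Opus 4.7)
\Cref{centre-of-skew-power-series-ring} already gives the inclusion $\OO_k[[Y]] \subseteq \cent(R)$ for $R := \OO_D[[X;\tau,\tau-\id]]$ and $Y := (1+X)^{(K:k)}-1$. My strategy is to prove the reverse inclusion \emph{and} the Schur index in one go: introduce the cleaner generator $u := 1+X$, exhibit $R$ as a free cyclic-type module over $\OO_D[[Y]]$, and then solve the centrality equations directly.

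\textbf{Cyclic decomposition.} A short computation starting from $Xd = \tau(d)X + (\tau-\id)(d)$ should yield the remarkably clean identity $ud = \tau(d)u$ for every $d \in \OO_D$. Combined with the centrality of $u^{(K:k)} = 1+Y$ and the observation that $Y - \sum_{i=1}^{(K:k)} \binom{(K:k)}{i} X^i = 0$ is a distinguished polynomial of degree $(K:k)$ in $X$ over $\OO_k[[Y]]$, this should give, via Weierstrass division and an upper-triangular change of basis from $\{X^i\}$ to $\{u^i\}$, the decomposition
\[
R \;=\; \bigoplus_{i=0}^{(K:k)-1} \OO_D[[Y]] \cdot u^i
\]
as a free left $\OO_D[[Y]]$-module of rank $(K:k)$. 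Since $\OO_D$ is free of rank $(K:k)s^2$ over $\OO_k$, it follows that $R$ is free of rank $((K:k)s)^2$ over $\OO_k[[Y]]$.

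\textbf{Centre and Schur index.} Writing a central element as $z = \sum_{i=0}^{(K:k)-1} a_i u^i$ with $a_i \in \OO_D[[Y]]$, centrality against $d \in \OO_D$ yields the system $a_i \tau^i(d) = d a_i$ for each $i$. For $1 \leq i \leq (K:k)-1$, I would argue that $a_i = 0$ as follows: after stripping its $Y$-adic valuation, the reduction of $a_i$ modulo $Y$ is a nonzero element of $\OO_D$, hence invertible in the skew field $D$; conjugation by it then realises $\tau^i$ as an inner automorphism of $D$, contradicting that $\tau^i$ acts non-trivially on $\cent(D) = K$. Centrality against $u$ next cuts $a_0 \in \OO_K[[Y]]$ down to $\OO_K[[Y]]^{\langle\tau\rangle} = \OO_k[[Y]]$, completing the equality $\cent(R) = \OO_k[[Y]]$. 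Running the same argument inside $\Quot(R) = \bigoplus_{i=0}^{(K:k)-1} (D \otimes_K L) \cdot u^i$, where $L := \Frac(\OO_K[[Y]])$, identifies $\cent(\Quot(R)) = L^{\langle\tau\rangle} = F := \Frac(\OO_k[[Y]])$, whence the Schur index of $\Quot(R)$ equals $\sqrt{[\Quot(R):F]} = (K:k)s$. The principal obstacle I foresee is the innerness-from-invertibility step: it requires $D \otimes_K L$ to remain a skew field with centre $L$ on which $\tau^i$ acts non-trivially, which in turn rests on preservation of the Brauer class $[D]$ under the base change $K \hookrightarrow L$ — this can be secured via the embedding $L \hookrightarrow K((Y))$ and the standard split exact sequence for the Brauer group of a Laurent-series ring over a local field.
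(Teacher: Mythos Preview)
Your argument is correct and takes a genuinely different route from the paper. The paper proceeds by exhibiting $\Emax = \Frac(\OO_{K(\omega)}[[Y]])$ as a maximal self-centralising subfield of $\Dchi$: it builds an explicit embedding $\Phi : \Dchi \hookrightarrow M_{(K:k)s}(\Emax)$ modelled on Hasse's splitting isomorphism \eqref{eq:Reiner-splitting}, then checks that anything in the image commuting with the diagonal matrix $\Phi(\alpha\omega)$ (for $\alpha$ a primitive element of $K/k$) must itself be diagonal and hence already in $\Phi(\Emax)$; the centre and Schur index then drop out of the degree $(\Emax:\FFF)=(K:k)s$. You instead realise $R$ as the cyclic-type algebra $\bigoplus_{i=0}^{(K:k)-1} \OO_D[[Y]]\,u^i$ via the clean relation $u\,d = \tau(d)\,u$ and compute the centre by the standard inner-automorphism obstruction: a nonzero $a_i$ with $i\ne 0$ would make $\tau^i$ inner on $D$, contradicting its non-trivial action on $K=\cent(D)$. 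Your route is more elementary --- no matrix model is needed --- while the paper's furnishes the explicit maximal subfield $\Emax$ and the matrix realisation $\Phi$ that feed into its later discussion of maximal orders and higher local fields.

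One remark: the Brauer-group step you flag as the principal obstacle is in fact avoidable. Once $\cent(R)=\OO_k[[Y]]$ is established, the equality $\cent(\Quot(R))=\Frac(\cent(R))=\FFF$ is immediate from the very definition of $\Quot$ as the central localisation: any central element has the form $rc^{-1}$ with $c\in\cent(R)$ regular, and commuting $rc^{-1}$ past all of $R$ forces $r\in\cent(R)$. The Schur index then follows from the rank count $\dim_{\FFF}\Quot(R)=((K:k)s)^2$, without ever needing to verify that $D\otimes_K L$ remains a skew field.
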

We introduce the following notation. For brevity, we will write $T\colonequals (1+X)^{( K: k)}-1$.
\begin{align*}
	\Sigmachi &\colonequals\OO_{ D}[[X;\tau,\tau-\id]]  , &
	\Dchi &\colonequals\Quot(\Sigmachi)  , \\
	\OO_\Emax &\colonequals \OO_{ K(\omega)}[[(1+X)^{( K: k)}-1]] = \OO_{ K(\omega)}[[T]]  , &
	\Emax &\colonequals\Frac \left(\OO_\Emax\right)  , \\
	\OO_\LLL&\colonequals \OO_{ K}[[(1+X)^{( K: k)}-1]] = \OO_{ K}[[T]]  , &\LLL&\colonequals \Frac\left(\OO_\LLL\right)  , \\
	\OO_\FFF &\colonequals \OO_{ k}[[(1+X)^{( K: k)}-1]] = \OO_{ k}[[T]]  , &
	\FFF &\colonequals \Frac\left(\OO_\FFF\right)  .
\end{align*}
{The automorphism $\tau$ of $D$ extends to an automorphism of $\Dchi$ by acting on coefficients. We will also denote this extension by $\tau$.}

\begin{proof}
	First, observe that $\Sigmachi$ is a (noncommutative) domain, because $\OO_{ D}$ is a domain and $\tau$ is an automorphism: this is \cite[Corollary~2.10(i)]{VenjakobWPT}. Therefore the total ring of quotients $\Dchi$ is a skew field. \Cref{centre-of-skew-power-series-ring} then shows that the square of the Schur index of this skew field divides the dimension over $\FFF$. This dimension is $\dim_{\FFF} \Dchi = ( K: k)( D: k)= \left( K: k\right)^2 s^2$. On the other hand, $\Dchi$ is a left $\Emax$-vector space of dimension $\dim_{\Emax} \Dchi = ( K: k)( D: K(\omega))= \left( K: k\right) s$.
	Note that $\Emax$ is a subfield of $\Dchi$. We will show that it is in fact a maximal (self-centralising) subfield, which in turn implies the \namecref{full-centre-of-skew-power-series-ring} by comparing the dimensions, {using \cite[Theorem~7.15]{MO}.}
	
	Consider the following composite map:
	\begin{align}
		\phi:  D &\to  K(\omega) \otimes_{ K}  D \xrightarrow{\sim} M_{s}( K(\omega)) \label{eq:phi-def} \\
		d &\mapsto 1\otimes d \notag
	\end{align}
	The second map here is the splitting isomorphism of Hasse, see \cite[Theorem~14.6]{MO}:
    \begin{align}
    	 K(\omega)\otimes_ K D&\xrightarrow{\sim}M_s( K(\omega)) \label{eq:Reiner-splitting} \\
    	x\otimes 1&\mapsto x\cdot\mathbf 1_s & {\text{for $x\in K(\omega)$}} \notag \\
    	1\otimes x&\mapsto \diag\left(x, \sigma(x),\ldots,\sigma^{s-1}(x)\right) & {\text{for $x\in K(\omega)$}} \notag \\
    	1\otimes \pi_ D &\mapsto \begin{psmallmatrix}
    		0&1 \\ &&1 \\ \vdots&&&\ddots \\ &&&&1 \\ \pi &&& \dots & 0 \\
    	\end{psmallmatrix} \notag
    \end{align}
    This map is an isomorphism onto its image. Moreover, $ D$ is a vector space over $ K(\omega)$ with basis $\pi_{ D}^i$ for $0\le i<s$, and the image $\phi( D)$ has $ K(\omega)$-basis $\phi(\pi_{ D}^i)$ for $0\le i<s$.
	
	The map $\phi$ gives rise to a homomorphism 
	\begin{align*}
		\hat\phi: \Quot\left(\OO_{ D}[[T]]\right) \to \Emax\otimes_\LLL \Quot\left(\OO_{ D}[[T]]\right) \xrightarrow\sim M_{s}\left(\Emax\right)
	\end{align*}
	by letting $\hat\phi(T)=T \cdot \mathbf 1_s$.	
	We define an $\LLL$-vector space homomorphism
	\begin{equation}\label{eq:Phi-def}
		\Phi: \Dchi\to \Emax \otimes_{\LLL} \Dchi \to M_{( K: k) s}\left(\Emax\right)  .
	\end{equation}
	The first map sends an element $d\in\Dchi$ to $1\otimes d$. The second map is the unique $\Emax$-linear map determined by the following two properties:
	for $g\in\Quot(\OO_{ D}[[T]])$, it maps $1\otimes g$ to the block diagonal matrix
	\[\Phi(g)\colonequals \diag\left(\hat\phi(g),\tau(\hat\phi(g)), \tau^2(\hat\phi(g)),\ldots,\tau^{( K: k)-1}(\hat\phi(g))\right)  ,\]
	and it maps $1\otimes X$ to
	\[\Phi(X)\colonequals \begin{psmallmatrix}
		-\mathbf{1}_{s} & \mathbf{1}_{s} \\
		& -\mathbf{1}_{s} & \mathbf{1}_{s} \\
		&& \ddots & \ddots \\
		&&& \ddots & \mathbf{1}_{s} \\
		&&&& -\mathbf{1}_{s} & \mathbf{1}_{s} \\
		(1+X)^{( K: k)}\mathbf{1}_{s} &&&&& -\mathbf{1}_{s}
	\end{psmallmatrix} = \begin{psmallmatrix}
		& \mathbf{1}_{s} \\
		& & \mathbf{1}_{s} \\
		&&& \ddots \\
		&&&& \mathbf{1}_{s} \\
		&&&&& \mathbf{1}_{s} \\
		(1+X)^{( K: k)}\mathbf{1}_{s} &&&&&
	\end{psmallmatrix}-\mathbf{1}_{( K: k)s}  ,\]
	where $\mathbf{1}_{s}$ is the $s\times s$ identity matrix.
	This defines a homomorphism as in \eqref{eq:Phi-def}, and for all $d\in  D$, the multiplication rule $\Phi(1+X)\Phi(d)=\tau(\Phi(d))\Phi(1+X)$
	of skew power series rings is satisfied: {see \cite[63]{thesis} for details.}
	So $\Phi$ is an $\LLL$-algebra homomorphism. Since $\Dchi$ is a skew field and $\Phi$ is not the zero map, $\Phi$ is injective.
	
	Since $\Phi$ is an isomorphism onto its image, $\Emax$ is a maximal subfield of $\Dchi$ if and only if $\Phi(\Emax)$ is a maximal subfield in $\Phi(\Dchi)$. 
	Let $\alpha$ be a primitive element of the field extension $ K/ k$. Then the Galois conjugates $\tau^j(\alpha)$ are pairwise distinct for $0\le j<( K: k)$. The elements $\sigma^i(\omega)$ are also pairwise distinct for $0\le i<s$. Since $\tau$ and $\sigma$ have coprime orders, the diagonal matrix $\Phi(\alpha\omega)$ has pairwise distinct entries in its diagonal. 
	
	Suppose $A\in\Phi(\Dchi)$ centralises $\Phi(\Emax)$. Then in particular, it commutes with $\Phi(\alpha\omega)$. By the previous observation, this forces $A$ to be diagonal. But it is clear from the definition of $\Phi$ that diagonal matrices have preimage in $\Emax$, hence $A\in\Phi(\Emax)$, proving that $\Emax$ is a maximal (self-centralising) subfield of $\Dchi$.
\end{proof}

\subsection{Maximal orders and maximal subfields} \label{sec:further-props-skew}
We collect a few more properties of the skew power series ring $\OO_{ D}[[X;\tau,\delta]]$. Let us write $\cent(\Dchi)$ and $\OO_{\cent(\Dchi)}$ for $\FFF$ and $\OO_\FFF$ from now on.

\begin{lemma} \label{Sigmachi-maximal}
$\Sigmachi$ is a maximal $\OO_{\cent(\Dchi)}$-order in $\Dchi$.
\end{lemma}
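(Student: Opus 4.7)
The plan is to verify maximality by means of the classical criterion that a reflexive $\OO_\FFF$-order in $\Dchi$ is maximal if and only if it is locally maximal at every height one prime of $\OO_\FFF$. The first step is to check that $\Sigmachi$ is a finitely generated free module over $\OO_{\cent(\Dchi)} = \OO_\FFF = \OO_{ k}[[T]]$: because $\OO_{ K}[[X]]$ is a free $\OO_\FFF$-module of rank $( K: k)$ (via the Weierstrass polynomial relation $(1+X)^{( K: k)}-1 = T$), and $\OO_{ D}$ is free over $\OO_{ K}$ of rank $s^2$, the family $\{\omega^i \pi_{ D}^j X^\ell : 0 \le i,j<s,\ 0 \le \ell <( K: k)\}$ is an $\OO_\FFF$-basis of $\Sigmachi$ of cardinality $( K: k)^2 s^2 = [\Dchi:\FFF]$. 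In particular $\Sigmachi$ is a reflexive $\OO_\FFF$-module.

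Second, since $\OO_\FFF$ is a two-dimensional regular local ring, the theory of orders yields that a reflexive $\OO_\FFF$-order in the skew field $\Dchi$ is maximal if and only if its localisation at every height one prime $\pp\subset\OO_\FFF$ is a maximal order over the discrete valuation ring $(\OO_\FFF)_\pp$. By Weierstrass preparation, the height one primes of $\OO_\FFF=\OO_{ k}[[T]]$ are principal, generated by $\pi_{ k}$, by $T$, or by a distinguished irreducible polynomial. For the prime $(T)$, the quotient $\Sigmachi/(T)\Sigmachi$ is a twisted polynomial-type ring over $\OO_{ D}$ whose local structure can be matched directly against $\OO_{ D}[[X;\tau,\delta]]$ inverted away from $\pi_{ k}$, and the analysis reduces to the classical fact that $\OO_{ D}$ is the unique maximal order in $ D$ (see \cite[Theorem~12.8]{MO}). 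For the other distinguished polynomial primes, the localisation is étale in an appropriate sense, and maximality is inherited from an Azumaya-type argument.

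The main obstacle is the prime $\pp = (\pi_{ k})$, where the residual object is a skew polynomial ring in characteristic $p$ and both $\tau$ and $\delta$ interact nontrivially with the valuation on $\OO_{ D}$. Here one exploits that $\Sigmachi$ is $(\pi_{ D},X)$-adically complete and that this topology agrees with the $\pi_{ k}$-adic topology up to a finite filtration, so that maximality at $(\pi_{ k})$ lifts from the maximality of $\OO_{ D}$ over $\OO_{ K}$ by a standard Nakayama-type lifting argument for orders over complete discrete valuation rings, as carried out in the skew power series setting by Schneider and Venjakob \cite{SchneiderVenjakob}. Combining the local verifications at all height one primes with reflexivity from the first step yields the maximality of $\Sigmachi$.
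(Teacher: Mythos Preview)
Your local--global strategy is sound in principle, and indeed the paper uses exactly this technique in the \emph{next} result (\cref{Sigmachi-unique}) to deduce uniqueness. But for maximality itself the paper takes a much shorter route: it invokes Venjakob's criterion \cite[Corollary~2.10(iii)]{VenjakobWPT}, which says that $R[[X;\tau,\delta]]$ is a noetherian maximal order whenever $R$ is complete local with maximal ideal $\mm$, $\gr_\mm R$ is a noetherian maximal order in its quotient ring, and $\bar\tau$ is an automorphism of $R/\mm$. Here $R=\OO_D$, $\mm=\pi_D\OO_D$, and $\gr_\mm\OO_D\simeq\overline{K}(\omega)[t]$ is a polynomial ring over a field, hence factorial, hence normal; the verification is two lines.

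Your plan, by contrast, has real gaps. First, the listed basis $\{\omega^i\pi_D^jX^\ell\}$ has only $s^2(K:k)$ elements, not $(K:k)^2s^2$; you are missing an $\OO_k$-basis of $\OO_K$. More seriously, the height-one cases are not as you describe. At $\pp=(T)$ the quotient $\Sigmachi/T\Sigmachi$ is the crossed product $\OO_D*\langle\tau\rangle$ with $(1+X)^{(K:k)}=1$, and reducing the maximality question to ``$\OO_D$ is maximal in $D$'' requires a nontrivial argument about this crossed product, not just $\OO_D$. At the other distinguished primes the Azumaya claim is not automatic: the extension $K/k$ may be ramified, so $\Sigmachi\otimes_{\OO_\FFF}(\OO_\FFF)_{(P)}$ need not be Azumaya without further work. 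And at $(\pi_k)$ you ultimately defer to Schneider--Venjakob; but the relevant machinery there is precisely the graded-ring filtration argument that the paper applies directly, so the detour through the other primes gains nothing.
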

\begin{proof}
The statement follows from \cite[Corollary 2.10(iii)]{VenjakobWPT}, which states that if $ R$ is a complete local ring with maximal ideal $\mm$, $ A= R[[T;\sigma,\delta]]$ a ring of skew power series over $ R$, the {associated graded ring $\gr_\mm  R$ of $R$} is a noetherian maximal order in $\Quot(\gr_\mm  R)$, and the reduction $\overline\sigma$ of $\sigma$ to $ R/\mm$ is an automorphism, then $ A$ is a noetherian maximal order in $\Quot( A)$.

We check that the conditions of this Corollary are satisfied. In Venjakob's notation, $ R\colonequals\OO_{ D}$, $\mm\colonequals\pi_{ D}\OO_{ D}$, $ A\colonequals\Sigmachi$. 
The associated graded ring $\gr_{\pi_{ D}\OO_{ D}} \OO_{ D}$ is isomorphic to the polynomial ring $\overline{\OO_{ D}}[t]$ where $\overline{\OO_{ D}}=\OO_{ D}/\pi_{ D}\OO_{ D}$ is the residue field; a priori, this {is} a skew field, but seeing that it is the same as $\overline{ K}(\omega)$ shows that it is actually a field. Just as in the commutative case, the isomorphism is given by
\begin{align*}
	\gr_{\pi_{ D}\OO_{ D}} \OO_{ D} = \bigoplus_{i=0}^\infty \OO_{ D}\pi_{ D}^i/\OO_{ D}\pi_{ D}^{i+1} &\xrightarrow{\sim} \overline{ K}(\omega)[t] \\
	\left[\pi_{ D}^i \bmod \pi_{ D}^{i+1}\right] &\mapsto t^i
\end{align*} 
The polynomial ring $\overline{ K}(\omega)[t]$ is {factorial}, hence normal, and thus a maximal order in its field of fractions $\overline{ K}(\omega)(t)$.
The remaining conditions are easily verified.
\end{proof}

{
\begin{lemma} \label{Sigmachi-conjugate}
    All maximal $\OO_{\cent(\Dchi)}$-orders in $\Dchi$ are conjugates of $\Sigmachi$.
\end{lemma}
\begin{proof}
    The ring $\OO_{\cent(\Dchi)}=\OO_{k}[[T]]$ is a regular local ring of dimension two, so Ramras's description of {quasi-local} orders {of finite global dimension} over regular local rings of dimension two {\cite[Theorem~5.4]{Ramras2}} is applicable.
    
    {The ring $\Sigmachi$ is local: its maximal ideal consists of skew power series with non-unit constant term \cite[Proposition~2.11]{VenjakobWPT}. The residue (skew) field is therefore $\OO_D/\pi_D\OO_D$: this is a finite field, which is in particular simple artinian, and hence $\Sigmachi$ is quasi-local in the sense of Ramras. The proof of \cref{Sigmachi-maximal} combined with \cite[Corollary~2.10(ii)]{VenjakobWPT} shows that $\Sigmachi$ has finite global dimension. Therefore $\Sigmachi$ satisfies the hypotheses of Ramras's theorem.}
    
    {Ramras's result states} that any two maximal orders in a central simple algebra over the field of fractions of a regular local ring of dimension two are conjugate. That is, for every maximal $\OO_{\cent(\Dchi)}$-order $\mathfrak M$ in $\Dchi$, there is some $u\in\Dchi^\times$ such that $\mathfrak M=u \Sigmachi u^{-1}$.
\end{proof}
}
\begin{remark}
    {The extent to which maximal orders in $\Dchi$ are not unique can be measured locally as follows. Let $\pp\subset \OO_{\cent(\Dchi)}$ be a prime ideal of height $1$. Let $\widehat{\OO}_{\cent(\Dchi),\pp}$ be the ring obtained by first localising $\OO_{\cent(\Dchi)}$ at $\pp$ and then taking completion in the $\pp$-adic topology. The field of fractions $\widehat{\cent(\Dchi)}_\pp\colonequals\Frac(\widehat{\OO}_{\cent(\Dchi),\pp})$ is a two-dimensional local field, that is, a discretely valued field whose residue field is a usual (i.e. one-dimensional) local field.} 
    
    {Define $\Dcp\colonequals \Frac(\widehat{\OO}_{\cent(\Dchi),\pp})\otimes_{\cent(\Dchi)}\Dchi$: then $\cent(\Dcp)=\widehat{\cent(\Dchi)}_\pp$, and $\Dcp$ is a central simple $\widehat{\cent(\Dchi)}_\pp$-algebra. If $\Dcp$ is a skew field, then it contains a unique maximal order by \cite[Theorem~12.8]{MO}. If $\pp$ is generated by a polynomial that admits a zero in some field extension of $k$ inside $D$, then it follows from a dimension counting argument that $\Dcp$ is a matrix ring over some skew field, and this skew field is finite dimensional over $\widehat{\cent(\Dchi)}_\pp$, and \cite[Theorem~17.3(ii)]{MO} shows that all maximal orders in $\Dcp$ are conjugate. From this point of view, \cref{Sigmachi-conjugate} can be seen as a local--global principle type statement.}

    {This approach of studying $\Dchi$ using the theory of higher local fields goes back to Lau \cite{Lau}. Generalising her methods, one can show that $\cent(\Dchi)$ has cohomological dimension $3$, and that $\SK_1(\Dcp)=1$ when $\pp\nmid p$. For further results as well as for a justification of the splitting behaviour of $\Dcp$ discussed above, we refer to Chapter~5 of the thesis \cite{thesis}.}
\end{remark}

{
\begin{remark}
    Lastly, we remark that $\Sigmachi$ is a UFD by \cite[Corollary~7.4]{VenjakobWPT}.
\end{remark}
}

\section{The Wedderburn decomposition} \label{sec:Wedderburn}

\subsection{Wedderburn decompositions and their skew fields} \label{sec:2-1}
The group ring $ F[H]$ is semisimple by Maschke's theorem. Consider its Wedderburn decomposition
\begin{equation} \label{eq:FH-Wedderburn}
    F[H]=\bigoplus_{\eta\in\Irr(H)/\sim_{ F}} M_{\nFeta}(D_\eta)  .
\end{equation}
Here $D_\eta$ is a finite dimensional skew field over its centre $ F(\eta)$, $\Irr( H)$ is the set of $ F^\al$-valued irreducible characters of $ H$, and two characters $\eta,\eta'\in\Irr( H)$ are equivalent, denoted $\eta\sim_{ F}\eta'$, if there exists some $\sigma\in\Gal( F(\eta)/ F)$ such that ${}^\sigma\eta=\eta'$ where $ F(\eta)= F(\eta(h):h\in H)$.
Let $\OO_{D_\eta}$ denote the unique maximal $\OO_ F$-order in $D_\eta$. 

The skew fields $D_\eta$ are cyclic algebras as in \cref{sec:extending-Galois-Deta}. Fix a uniformiser $\pi_\eta$ of $ F(\eta)$, and write $\pi_{D_\eta}$ for the element denoted by $\pi_{ D}$ in \cref{sec:extending-Galois-Deta}. Then $\pi_\eta=\pi_{D_\eta}^{\sFeta}$, and $\omega$ is a root of unity of order {$q_\eta^{\sFeta}-1$}, where $q_\eta$ is the order of the residue field of $ F(\eta)$, and $\sFeta$ denotes the Schur index of $\eta$. 
{The root of unity $\omega$ depends on $\eta$; as we will mostly work in a single Wedderburn component, we shall suppress this dependence in our notation.}
Note that $s_\eta\mid p-1$ by \cite[Satz 10]{Witt1952}. Recall the well-known relationship $\eta(1)=\sFeta \nFeta$
between degree, Schur index and the size of the matrix ring, see e.g. \cite[Remark~74.10(ii)]{CRII}. Note that $\eta(1)$ is independent of $ F$ while $\sFeta$ and $\nFeta$ are not.

Setting $\tD_\eta\colonequals \Q^{ F(\eta)}(\Gamma_0) \otimes_{ F(\eta)} D_\eta$, the Wedderburn decomposition of $ F[H]$ induces the following decomposition of $\Q^ F(\Gamma_0)[H]$:\begingroup\allowdisplaybreaks
\begin{align} \label{eq:QFGamma0-Wedderburn}
	\Q^ F(\Gamma_0)[H] &= \bigoplus_{\eta\in\Irr(H)/\sim_{ F}} M_{\nFeta} \big( \tD_\eta \big)  .
\end{align}
\begin{lemma}
	$\tD_\eta$ is a skew field with centre $\cent(\tD_\eta)=\Q^{ F(\eta)}(\Gamma_0)$ and Schur index $\sFeta$.
\end{lemma}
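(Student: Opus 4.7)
The plan is a two-part argument: identify the centre using general properties of central simple algebras, and pin down the Schur index by a base-change comparison against a simpler skew field.

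First, I would observe that since $\Gamma_0 \simeq \ZZ_p$ is abelian, the completed group algebra $\OO_{ F(\eta)}\llb\Gamma_0\rrb$ is a commutative domain, isomorphic to the power series ring $\OO_{ F(\eta)}[[T]]$ for $T = \gamma_0 - 1$ with $\gamma_0$ any topological generator of $\Gamma_0$. In particular, $\Q^{ F(\eta)}(\Gamma_0)$ is its field of fractions, hence itself a field. Since $D_\eta$ is a central simple algebra over $ F(\eta)$ of dimension $\sFeta^2$, tensoring with the field extension $\Q^{ F(\eta)}(\Gamma_0)/ F(\eta)$ yields a central simple algebra $\tD_\eta$ over $\Q^{ F(\eta)}(\Gamma_0)$ of dimension $\sFeta^2$; in particular $\cent(\tD_\eta) = \Q^{ F(\eta)}(\Gamma_0)$.

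Next, I would pin down the Schur index. It is at most $\sFeta$ by the general principle that Schur indices can only decrease along base change of the centre. For the reverse inequality, I would embed $\Q^{ F(\eta)}(\Gamma_0) = \Frac(\OO_{ F(\eta)}[[T]])$ into $ F(\eta)((T)) = \Frac( F(\eta)[[T]])$ via the inclusion of integral domains $\OO_{ F(\eta)}[[T]] \hookrightarrow  F(\eta)[[T]]$. Base-changing $\tD_\eta$ along this inclusion gives $ F(\eta)((T)) \otimes_{ F(\eta)} D_\eta \simeq D_\eta((T))$, the Laurent series ring in a central indeterminate $T$ over $D_\eta$. The power series ring $D_\eta[[T]]$ is a noncommutative local domain with unique (two-sided) maximal ideal $(T)$ and residue ring $D_\eta$; inverting the central element $T$ turns every nonzero element into a unit, so $D_\eta((T))$ is a skew field with centre $ F(\eta)((T))$ and, by a dimension count, Schur index $\sFeta$. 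Applying base-change monotonicity once more yields $\mathrm{ind}(\tD_\eta) \geq \sFeta$, whence $\mathrm{ind}(\tD_\eta) = \sFeta$. Since this matches $\sqrt{\dim_{\cent(\tD_\eta)} \tD_\eta}$, the central simple algebra $\tD_\eta$ must itself be a skew field.

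I do not expect a serious obstacle here. The only point requiring some care is the verification that $D_\eta((T))$ is a skew field with the asserted Schur index, which is a standard consequence of the fact that $T$ is central in $D_\eta[[T]]$ and that $D_\eta$ is itself a skew field, together with the computation of the centre.
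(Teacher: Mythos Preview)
Your argument is correct. The treatment of the centre coincides with the paper's (both invoke the standard fact that tensoring a central simple algebra with a field extension of its centre yields a central simple algebra over the larger field). For the Schur index and the skew field property, however, the two arguments diverge.

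The paper first asserts that $\tD_\eta$ is a skew field by reference to an external lemma (Nichifor--Palvannan), and then computes the Schur index directly by exhibiting an explicit splitting field: since $F(\eta)(\omega)$ splits $D_\eta$, the subfield $\Q^{F(\eta)(\omega)}(\Gamma_0)\subseteq\tD_\eta$ of degree $\sFeta$ over the centre splits $\tD_\eta$, whence the index is exactly $\sFeta$. Your route instead sandwiches the index by two applications of base-change monotonicity, passing further to the Laurent series field $F(\eta)((T))$ where the tensor product becomes $D_\eta((T))$, whose skew field structure and index are transparent; you then deduce the skew field property of $\tD_\eta$ \emph{a posteriori} from the equality $\mathrm{ind}(\tD_\eta)^2=\dim_{\cent(\tD_\eta)}\tD_\eta$. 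Your approach is more self-contained in that it avoids both the external citation and the cyclic algebra description of $D_\eta$; the paper's approach is more in keeping with the ambient setup, since the root of unity $\omega$ and the maximal unramified subfield $F(\eta)(\omega)$ are already part of the standing notation and will be used repeatedly in later sections.
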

\begin{proof}
	The ring $\tD_\eta$ is a simple algebra with centre $\cent(\tD_\eta)=\Q^{ F(\eta)}(\Gamma_0)$ by \cite[Theorem~7.6]{MO}, and it is a skew field by essentially the same argument as in \cite[Lemma~2.10]{NichiforPalvannan}.
    Since $ F(\eta)(\omega)$ is a splitting field for $D_\eta$, the subfield $\Q^{ F(\eta)(\omega)}(\Gamma_0)$ of $\tD_\eta$ is also a splitting field, and $\tD_\eta\otimes_{\Q^{ F(\eta)}(\Gamma_0)}\Q^{ F(\eta)(\omega)}(\Gamma_0) \simeq M_{\sFeta}\left(\Q^{ F(\eta)(\omega)}(\Gamma_0)\right)$, hence $\tD_\eta$ has Schur index $s_\eta$.
\end{proof}

The algebra $\Q^ F(\G)$ is semisimple artinian {\cite[Proposition~5(1)]{TEIT-II}}. We write
\[\Q^ F(\G) \simeq \bigoplus_{\chi\in\Irr(\G)/\sim_{ F}} M_{\nFchi}(\DFchi)\]
for its Wedderburn decomposition. {Recall that $\chi\sim_F\chi'$ if there is a $\sigma\in\Gal( F_\chi/ F)$ such that ${}^\sigma(\res^\G_H \chi)=\res^\G_H \chi'$.}
Let $\sFchi$ denote the Schur index of $\DFchi$: then we have $\chi(1)=\sFchi \nFchi$, see \cite[Corollary~1.9]{NickelConductor}.

\begin{definition} \label{def:f-idempotent}
	For $1\le j\le \nFeta$, let $f_\eta^{(j)}$ be the element corresponding to the $\nFeta\times \nFeta$ matrix with zeros everywhere but in the $j$th entry of the diagonal and $1$ there, under the isomorphism $\epsilon(\eta) F[H] \simeq M_{\nFeta}(D_\eta) $.
\end{definition}
These are indecomposable, not necessarily central idempotents, and the following identities hold:
\begin{gather}
\big(f_\eta^{(j)}\big)^2 = f_\eta^{(j)} = f_\eta^{(j)} \epsilon(\eta), \quad \epsilon(\eta)=\sum_{j=1}^{\nFeta} f_\eta^{(j)} , \notag \\
\label{Deta-feta}
	D_\eta\simeq \begin{psmallmatrix}
		0 \\
		& \ddots \\
		&  & 1 \\
		&  &  & \ddots \\
		&  &  &  & 0 
	\end{psmallmatrix} M_{\nFeta}(D_\eta) \begin{psmallmatrix}
		0 \\
		& \ddots \\
		&  & 1 \\
		&  &  & \ddots \\
		&  &  &  & 0 
	\end{psmallmatrix} \simeq f_\eta^{(j)}  F[H]\epsilon(\eta) f_\eta^{(j)} = f_\eta^{(j)}  F[H] f_\eta^{(j)}  .
\end{gather}

The above constructions in the group ring $F[H]$ also work in the group ring $\Q^ F(\Gamma_0)[H]$, that is, after tensoring with $\Q^ F(\Gamma_0)$: 
\begin{equation} \label{tDeta-feta}
	\tD_\eta\simeq f_\eta^{(j)} \Q^ F(\Gamma_0)[H] f_\eta^{(j)}  .
\end{equation}
Here we tacitly identify $f_\eta^{(j)}$ with $1\otimes f_\eta^{(j)}$, and we shall continue to do so in the sequel.
These idempotents were used to study Wedderburn decompositions of Iwasawa algebras in \cite[Theorem 1.11]{NickelConductor} and in \cite[Theorem~1]{Lau}.

\label{def:fchi}Analogously, we can define $f_\chi^{(j)}$ to be the element corresponding to the $n_\chi\times n_\chi$-matrix with $1$ in the $j$th diagonal position and zeros elsewhere under the isomorphism $\epsilon_\chi\Q^ F(\G) \simeq M_{\nFchi}(\DFchi)$.
The relationship between the idempotents $f_\chi^{(j)}$ resp. $f_\eta^{(j')}$ associated with $\chi$ resp. $\eta$, where $\eta$ is an irreducible constituent of $\res_H^\G\chi$, will be studied in \cref{sec:dimSchur}.

\subsection{The extension \texorpdfstring{$ F(\eta)/ F_\chi$}{F(η)/Fχ}} \label{sec:Feta-Fchi}

As before, let $ F$ be a finite extension of $\QQ_p$.

\begin{definition}
	Let $\vf$ be the minimal positive exponent such that $\gamma^{\vf}$ acts as a Galois automorphism on $\eta$:
	\[\vf\colonequals \min\left\{0< i\le w_\chi : \exists \tau\in\Gal( F(\eta)/ F), {}^{\gamma^i}\eta={}^\tau\eta\right\}  .\] 
\end{definition}
{As we will momentarily see in \cref{Qpeta-Qpchi-cyclic}, $v_\chi$ does not depend on $\eta$.}
Note that $\vf\mid w_\chi$ since ${}^{\gamma^{w_\chi}}\eta=\eta$ by definition of $w_\chi$.
\begin{remark}
	The number $v^{\QQ_p}_\chi$ was introduced in \cite[1223]{Lau}. There is a typo in that definition: if $i=0$ would be allowed, then one would have $v^{\QQ_p}_\chi=0$ with $\tau=\id$.
\end{remark}

Recall from \eqref{eta-i} that $\sss$ is the number of irreducible characters of $H$ whose Galois orbits sum up to $\res^\G_H\chi$.
\begin{proposition}\label{Qpeta-Qpchi-cyclic}
	The Galois group $\Gal( F(\eta)/ F_{\chi})$ is cyclic of order $w_\chi/\vf$, and $\sss=\vf$.
	Any $\tau\in\Gal( F(\eta)/ F)$ such that ${}^{\gamma^{\vf}}\eta={}^\tau\eta$ is a generator of $\Gal( F(\eta)/ F_{\chi})$, and in fact, there is exactly one such $\tau$. 
\end{proposition}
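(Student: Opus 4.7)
The plan is to establish four assertions in order: uniqueness of $\tau$; that $\tau$ fixes $F_\chi$; that $\tau$ has order $w_\chi/\vf$; and that $\Gal(F(\eta)/F_\chi)=\langle\tau\rangle$. The equality $\sss=\vf$ will then drop out of \eqref{wchi-s}, since $(F(\eta):F_\chi)=w_\chi/\vf$.

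Uniqueness is immediate: $F(\eta)=F(\eta(h):h\in H)$, so an automorphism of $F(\eta)/F$ is determined by its effect on $\eta$. For the second assertion, I would exploit that the $\Gamma$-action on $\Irr(H)$ commutes with the $\Gal(F(\eta)/F)$-action, since the former twists the group variable while the latter acts on coefficients. Applying $\tau$ to $\chi(h)=\sum_{i=0}^{w_\chi-1}{}^{\gamma^i}\eta(h)$ from \eqref{eta-i} and invoking commutativity shifts the sum cyclically by $\vf$, which leaves it unchanged by $w_\chi$-periodicity of the $\gamma$-action on $\eta$.

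For the order of $\tau$, I would first establish $\vf\mid w_\chi$ by a division-with-remainder argument: writing $w_\chi=q\vf+r$ with $0\le r<\vf$ and combining ${}^{\gamma^{w_\chi}}\eta=\eta$ with ${}^{\gamma^{\vf}}\eta={}^\tau\eta$ shows that ${}^{\gamma^r}\eta$ is a Galois conjugate of $\eta$, forcing $r=0$ by minimality of $\vf$. The $\langle\tau\rangle$-orbit of $\eta$ is then $\{{}^{\gamma^{j\vf}}\eta:0\le j<w_\chi/\vf\}$, of size exactly $w_\chi/\vf$ because the $w_\chi$ characters ${}^{\gamma^i}\eta$ are pairwise distinct (as $w_\chi=(\G:\G_\eta)$). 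By uniqueness, the order of $\tau$ as a Galois automorphism equals the size of this orbit.

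Finally, to show $\Gal(F(\eta)/F_\chi)=\langle\tau\rangle$, take any $\tau'$ in this Galois group and apply it to $\chi(h)=\sum_i{}^{\gamma^i}\eta(h)$. Commutativity of the two actions yields an equality $\sum_i{}^{\gamma^i}({}^{\tau'}\eta)=\sum_i{}^{\gamma^i}\eta$ of characters of $H$, and linear independence of irreducible characters forces ${}^{\tau'}\eta={}^{\gamma^k}\eta$ for some $k$. The same division-with-remainder argument, now applied to $k$, forces $\vf\mid k$, whence $\tau'=\tau^{k/\vf}$ by uniqueness. Together with \eqref{wchi-s}, this gives $\sss=\vf$. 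The main subtle point is precisely this last step: knowing a priori that an arbitrary $\tau'\in\Gal(F(\eta)/F_\chi)$ sends $\eta$ into its $\Gamma$-orbit, which is delivered by the invariance of $\res^\G_H\chi$ under $\Gal(F(\eta)/F_\chi)$ combined with linear independence of irreducible characters.
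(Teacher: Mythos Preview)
Your proof is correct. The approach differs from the paper's mainly in organisation: the paper's argument hinges on comparing the Clifford-theoretic decomposition $\res^\G_H\chi=\sum_{j}\sum_{\psi\in\Gal(F(\eta)/F_\chi)}{}^\psi\eta_j$ from \eqref{eta-i} against the rearrangement $\sum_{k=0}^{\vf-1}\sum_{\ell=0}^{w_\chi/\vf-1}{}^{\tau^\ell}({}^{\gamma^k}\eta)$, and reads off both $\sss=\vf$ and $\Gal(F(\eta)/F_\chi)=\langle\tau\rangle$ from the fact that the inner sums on one side are full Galois orbits while the ${}^{\gamma^k}\eta$ for $0\le k<\vf$ lie in distinct orbits. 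You bypass the $\eta_j$'s entirely: you determine the order of $\tau$ by orbit counting, then show directly that any $\tau'\in\Gal(F(\eta)/F_\chi)$ must send $\eta$ into its $\Gamma$-orbit (via linear independence applied to $\tau'(\res^\G_H\chi)=\res^\G_H\chi$) and use division-with-remainder against the minimality of $\vf$ to conclude $\tau'\in\langle\tau\rangle$; the equality $\sss=\vf$ then falls out of \eqref{wchi-s}. Your route is a touch more self-contained, not needing Nickel's decomposition as an input, while the paper's comparison argument has the virtue of identifying the $\eta_j$'s explicitly as ${}^{\gamma^k}\eta$ for $0\le k<\vf$, which is used later (e.g.\ in \eqref{eq:epsilon-chi-decomp}).
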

In the sequel, the symbol $\tauf=\tau^ F$ shall denote this unique automorphism (or an extension of it, as constructed in \cref{sec:extending-Galois-Deta}). {Dependence of $\tau$ on $\chi$ is suppressed from the notation.} We shall no longer use the non-standard notation $\sss$, always writing $\vf$ instead. {It follows from \cref{Qpeta-Qpchi-cyclic} and \eqref{wchi-s} that $v_\chi$ depends only on $\chi$ and not on $\eta$.}

\begin{remark}
    \Cref{Qpeta-Qpchi-cyclic} is an improvement upon \cite[Lemma~1.1]{NickelConductor}, where it was shown that $w_\chi=\sss( F(\eta): F_\chi)$. It also shows that $\vf$ does not depend on the choice of the topological generator $\gamma$. Furthermore, it is a direct consequence of \cref{Qpeta-Qpchi-cyclic} that the field $L$ occurring in \cite{Lau} is in fact $\QQ_{p,\chi}$; see Proposition~1 and Theorem~1 of loc.cit.
\end{remark}

\begin{proof}[Proof of \cref{Qpeta-Qpchi-cyclic}]
	Let $\tau\in\Gal( F(\eta)/ F)$ be as in the definition of $\vf$. First, note that
	\begin{align*}
		\res^\G_H\chi = \sum_{i=0}^{w_\chi-1} {}^{\gamma^i}\eta = \sum_{i=0}^{w_\chi-1} {}^{\gamma^i}\left({}^{\gamma^{\vf}}\eta\right) 
		= \sum_{i=0}^{w_\chi-1} {}^{\gamma^i}\left({}^{\tau}\eta\right)
		= \tau\left(\sum_{i=0}^{w_\chi-1} {}^{\gamma^i}\eta\right)
		= \tau\left(\res^\G_H\chi\right),
	\end{align*}
    which shows that $\tau\in\Gal( F(\eta)/ F_{\chi})$.
	Consider the characters ${}^{\gamma^k}\eta$ for $0\le k <\vf$. These are all in separate $\Gal( F(\eta)/ F)$-orbits: that is, for any $\psi\in\Gal( F(\eta)/ F)$ and $0\le k,k'< \vf$ distinct, we have 
	\begin{equation}\label{eq:gamma-separate-orbits}
		{}^{\gamma^k}\eta\ne{}^\psi\left({}^{\gamma^{k'}}\eta\right)  .
	\end{equation}
	by the minimality condition in the definition of $\vf$.
	
	Now consider the following two decompositions of $\res_H^\G\chi$:
	\begin{equation} \label{eq:sum-eta-decompositions}
		\sum_{j=0}^{\sss-1} \sum_{\psi\in\Gal( F(\eta)/ F_{\chi})} {}^\psi \eta_j = \res_H^\G \chi = \sum_{i=0}^{w_\chi-1} {}^{\gamma^i}\eta = \sum_{k=0}^{\vf-1}\sum_{\ell=0}^{\frac{w_\chi}{\vf}-1} {}^{\tau^\ell}\left({}^{\gamma^k}\eta\right)  .
	\end{equation}
	where $\eta_j$ was defined in \eqref{eta-i}.
	The summands on the right hand side are distinct, that is, \[{}^{\tau^\ell}\left({}^{\gamma^k}\eta\right)\ne {}^{\tau^{\ell'}}\left({}^{\gamma^{k'}}\eta\right) \text{ unless } \ell=\ell' \text{ and } k=k'  .\]
	Indeed, for $k\ne k'$, this is \eqref{eq:gamma-separate-orbits}. For $k=k'$, this is because ${}^{\tau^\ell}({}^{\gamma^k}\eta)={}^{\tau^{\ell'}}({}^{\gamma^k}\eta)$ is equivalent to $\eta={}^{\tau^{\ell'-\ell}}\eta$, which means that $\tau^{\ell'-\ell}$ fixes $ F(\eta)$, and so $\ell'=\ell$. (In fact, the same argument shows that if $\psi,\psi'\in\Gal( F(\eta)/ F)$ then ${}^{\psi}({}^{\gamma^k}\eta)\ne {}^{\psi'}({}^{\gamma^{k'}}\eta)$ unless $\psi=\psi'$ and $k=k'$.)
	
	On the left hand side of \eqref{eq:sum-eta-decompositions}, we have full Galois orbits, so this must also be true on the right hand side. It follows that $\sss=\vf$, and we may assume $\eta_{(k)}={}^{\gamma^k}\eta$ up to renumbering. Moreover,  $\Gal( F(\eta)/ F_{\chi})$ is generated by $\tau$; in particular, it is cyclic of order $w_\chi/\vf$.
	
	Finally, we show that there is only one $\tau\in\Gal( F(\eta)/ F_{\chi})$ for which ${}^{\gamma^{\vf}}\eta={}^\tau\eta$. {Suppose that ${}^{\gamma^{\vf}}\eta={}^{\tau'}\eta$ for some $\tau'\in\Gal(F(\eta)/F)$. Then ${}^\tau\eta={}^{\tau'}\eta$, so $\tau(\eta(h))=\tau'(\eta(h))$ for all $h\in H$. Since $F(\eta)$ is generated by the values $\eta(h)$ over $F$, this shows $\tau=\tau'$.}
\end{proof}

\begin{lemma} \label{changing-v}
	Let $ F_\chi\subseteq E\subseteq  F(\eta)$, and let $\tau^ F$ resp. $\tau^E$ denote the automorphisms given by \cref{Qpeta-Qpchi-cyclic} for the ground fields $ F$ resp. $E$. Then $\tau^E=(\tau^ F)^{(E: F_\chi)}$ and $v^E_\chi=v^ F_\chi (E: F_\chi)$.
\end{lemma}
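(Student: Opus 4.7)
The plan hinges on two facts: the uniqueness characterisation of the pair $(v^F_\chi, \tau^F)$ supplied by \cref{Qpeta-Qpchi-cyclic}, and the commutativity of the $\gamma$-conjugation and Galois actions on $\Irr(H)$. The latter is a one-line verification: for any $\psi\in\Gal(F(\eta)/F)$ and $h\in H$,
\[({}^\psi({}^\gamma\eta))(h) = \psi(\eta(\gamma h\gamma^{-1})) = ({}^\gamma({}^\psi\eta))(h).\]
First I would record that $F_\chi\subseteq E$ entails $E_\chi = E$, so that $\tau^E$ lives in $\Gal(F(\eta)/E)$; by cyclicity of $\Gal(F(\eta)/F_\chi) = \langle\tau^F\rangle$, this subgroup has index $(E:F_\chi)$, hence is generated by $(\tau^F)^{(E:F_\chi)}$.

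Iterating the defining relation ${}^{\gamma^{v^F_\chi}}\eta = {}^{\tau^F}\eta$ together with the commutativity above yields ${}^{\gamma^{kv^F_\chi}}\eta = {}^{(\tau^F)^k}\eta$ for every $k\ge 0$. Taking $k = (E:F_\chi)$ provides a witness for the defining property of $v^E_\chi$ that lies in $\Gal(F(\eta)/E)$, giving the upper bound $v^E_\chi \le v^F_\chi(E:F_\chi)$. For the reverse inequality, I would note that the set $S\subset\ZZ_{>0}$ of exponents $i$ with ${}^{\gamma^i}\eta$ Galois-conjugate to $\eta$ over $F$ is closed under taking positive differences (compose with the appropriate power of $(\tau^F)^{-1}$), and hence equals $v^F_\chi\ZZ_{>0}$ by standard arithmetic of numerical sub-semigroups. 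In particular $v^F_\chi\mid v^E_\chi$; writing $v^E_\chi = jv^F_\chi$, the iterated identity gives ${}^{(\tau^F)^j}\eta = {}^{\tau^E}\eta$, and since $F(\eta)$ is generated over $F$ by the values $\eta(h)$, the two automorphisms must coincide, so $\tau^E = (\tau^F)^j$. Membership in $\Gal(F(\eta)/E) = \langle(\tau^F)^{(E:F_\chi)}\rangle$ then forces $(E:F_\chi)\mid j$, yielding $v^E_\chi \ge v^F_\chi(E:F_\chi)$. Combining both bounds gives $j = (E:F_\chi)$, and hence the two claimed equalities $v^E_\chi = v^F_\chi(E:F_\chi)$ and $\tau^E = (\tau^F)^{(E:F_\chi)}$ simultaneously.

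I do not anticipate any significant obstacle; the argument is essentially bookkeeping around \cref{Qpeta-Qpchi-cyclic}. The only delicate point is keeping straight which Galois group each automorphism belongs to, in particular making sure that the uniqueness of the witness $\tau$ is applied over $F$ (where it is given by \cref{Qpeta-Qpchi-cyclic}) rather than needing to be re-derived over $E$.
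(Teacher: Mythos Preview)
Your argument is correct. Both your proof and the paper's proof establish the iterated identity ${}^{\gamma^{kv^F_\chi}}\eta = {}^{(\tau^F)^k}\eta$ and deduce from $k=(E:F_\chi)$ that $v^E_\chi$ divides (or is bounded by) $v^F_\chi(E:F_\chi)$. The difference lies in the converse direction. The paper writes $t\colonequals v^F_\chi(E:F_\chi)/v^E_\chi$, shows that $(\tau^E)^t=(\tau^F)^{(E:F_\chi)}$, and then observes that since both $\tau^E$ and $(\tau^E)^t$ generate the cyclic $p$-group $\Gal(F(\eta)/E)$ one has $p\nmid t$, while $t$ divides a $p$-power, forcing $t=1$. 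You instead first show $v^F_\chi\mid v^E_\chi$ via the numerical-semigroup argument, write $j\colonequals v^E_\chi/v^F_\chi$, identify $\tau^E=(\tau^F)^j$, and use membership $(\tau^F)^j\in\langle(\tau^F)^{(E:F_\chi)}\rangle$ to conclude $(E:F_\chi)\mid j$. Your route avoids the $p$-power/coprimality trick entirely, so it works verbatim without appealing to the fact that $(F(\eta):F_\chi)$ is a $p$-power; the paper's route is slightly slicker once that fact is in hand. Either way the bookkeeping is the same, and your care in applying uniqueness over $F$ rather than $E$ is exactly right.
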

\begin{proof}
Since the extension $ F(\eta)/ F_\chi$ is cyclic, it is clear that $(\tau^ F)^{(E: F_\chi)}\in\Gal( F(\eta)/E)$, and this is a generator. Moreover, ${}^{(\tau^ F)^{(E: F_\chi)}}\eta={}^{\gamma^{v^ F_\chi (E: F_\chi)}}\eta$, hence $v^E_\chi\mid v^ F_\chi (E: F_\chi)$. Let us write $v^E_\chi t=v^ F_\chi (E: F_\chi)$.

Suppose that $\psi\in\Gal( F(\eta)/E)$ is such that ${}^\psi\eta={}^{\gamma^{v^E_\chi}}\eta$. Then $\psi$ is also a generator, and ${}^{\psi^t}\eta={}^{\gamma^{v^E_\chi t}}\eta={}^{\gamma^{v^ F_\chi (E: F_\chi)}}\eta={}^{(\tau^ F)^{(E: F_\chi)}}\eta$, which shows that $\psi^t=(\tau^ F)^{(E: F_\chi)}$. Since both $\psi$ and $\psi^t=(\tau^ F)^{(E: F_\chi)}$ are generators of the finite $p$-group $\Gal( F(\eta)/E)$, we have $p\nmid t$. On the other hand, $v^ F_\chi (E: F_\chi)$ is a $p$-power, and thus so is $t$. Hence $t=1$, which finishes the proof.
\end{proof}

The results of \cref{sec:extending-Galois-Deta} are applicable to $D_\eta$: indeed, the extension $ F(\eta)/ F_\chi$ is a Galois $p$-extension (its degree divides the $p$-power $w_\chi$), we have $ F(\eta)^{\langle\tauf\rangle}= F_\chi$ by \cref{Qpeta-Qpchi-cyclic}, and the condition on the Schur index is satisfied, as we have divisibilities $\sFeta\mid (p-1)\mid ( q_{\tauf}-1)$ by \cite[Satz 10]{Witt1952}.
So $\tau$ extends to $D_\eta$ as an automorphism of order $w_\chi/v_\chi$. Finally, we extend $\tauf$ from $D_\eta$ to $\tD_\eta$ by letting $\tauf(\gamma_0)\colonequals\gamma_0$.

\subsection{Description of the Wedderburn decomposition} \label{sec:theorems}
The following \cref{dim-Schur,D-conj} describe the Wedderburn decomposition 
\[\Q^ F(\G) \simeq \bigoplus_{\chi\in \Irr({\G})/\sim_ F} M_{\nFchi}(\DFchi)  .\]
{Here $F/\QQ_p$ is a finite extension, and two irreducible characters $\chi,\chi'$ with open kernel are equivalent if there is a $\sigma\in\Gal( F_\chi/ F)$ such that ${}^\sigma(\res^\G_H \chi)=\res^\G_H \chi'$. For each such equivalence class, $D_\chi$ is a skew field of index $s_\chi$.}

{
\begin{theorem} \label{dim-Schur}
{Let $\chi\in\Irr(\G)$ be an irreducible character with open kernel, and let $\eta\mid\res^\G_H\chi$ be an irreducible constituent of its restriction to $H$.}
{Suppose that $F(\eta)/F_\chi$ is totally ramified.}
Then
\begin{theoremlist}
	\item $\nFchi = \nFeta \vf$, \label{sn-inequality} \label{s-neta-nchi}
	\item $\sFchi = \sFeta w_\chi/\vf$. \label{s-chi-s-eta}
\end{theoremlist}
\end{theorem}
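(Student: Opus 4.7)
My plan is to compute $n_\chi$ and $s_\chi$ by restricting $\epsilon_\chi\Q^F(\G)$ to a corner, performing a Morita-type reduction, and identifying the resulting simple algebra within the Brauer class of $D_\chi$. Since $\chi(1)=s_\chi n_\chi = w_\chi\eta(1) = v_\chi e f\,s_\eta n_\eta$, the two parts of the theorem are equivalent once either is known, so I focus on proving $s_\chi = e s_\eta$.

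First, \eqref{eq:epsilon-chi-etas} writes $\epsilon_\chi = \sum_{i=1}^{v_\chi}\epsilon(\eta_i)$ as a sum of $v_\chi$ primitive central idempotents of $F[H]$ which are cyclically permuted by conjugation by $\gamma$. Since $\epsilon_\chi$ is central in $\Q^F(\G)$, this conjugation restricts to an inner automorphism of $\epsilon_\chi\Q^F(\G)\simeq M_{n_\chi}(D_\chi)$, so each $\epsilon(\eta_i)$ has rank $n_\chi/v_\chi$ and
\[
A\colonequals \epsilon(\eta)\,\Q^F(\G)\,\epsilon(\eta) \simeq M_{n_\chi/v_\chi}(D_\chi).
\]
Combining \eqref{eq:Q-decomposition} with the orthogonality $\epsilon(\eta)\cdot\epsilon({}^{\gamma^i}\eta)\ne 0\iff v_\chi\mid i$, I obtain the explicit description
\[
A=\bigoplus_{j=0}^{p^{n_0}/v_\chi-1} M_{n_\eta}(\tilde D_\eta)\,\gamma_1^j,\qquad \gamma_1\colonequals\gamma^{v_\chi},\quad \gamma_1^{p^{n_0}/v_\chi}=\gamma_0,
\]
where conjugation by $\gamma_1$ on $M_{n_\eta}(\tilde D_\eta)\simeq\epsilon(\eta)\Q^F(\Gamma_0)[H]$ lifts the action of $\tau^F\otimes\mathrm{id}$ on the centre $\Q^{F(\eta)}(\Gamma_0)$. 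After adjusting $\gamma_1$ by an inner unit (Skolem--Noether together with the uniqueness of \cref{extending-tau-to-D}) this action becomes the entry-wise action of $\tau$, so a standard matrix-unit argument gives $A\simeq M_{n_\eta}(B)$ where
\[
B\colonequals \tilde D_\eta\,\langle\Theta;\tau\rangle\big/\bigl(\Theta^{p^{n_0}/v_\chi}-c\bigr)
\]
for a suitable central unit $c$. A computation of $\tau$-invariants yields $Z(B)=\Q^{F_\chi}(\Gamma^{w_\chi})$, and comparing free-module ranks over $\Q^F(\Gamma_0)$ gives $\dim_{Z(B)} B = (efs_\eta)^2$.

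Writing $B\simeq M_m(D_\chi)$ --- possible because $M_{n_\eta}(B)\simeq M_{n_\chi/v_\chi}(D_\chi)$ forces $B$ and $D_\chi$ to share a Brauer class --- I obtain $n_\chi = v_\chi n_\eta m$ and $m s_\chi = e f s_\eta$, so both claims of the theorem reduce to $m = f$. This is the main obstacle. The divisibility $s_\chi\mid s_\eta e f$ follows from \cite{NickelConductor}, so what remains is the matching lower bound $s_\chi\ge e s_\eta$. The strategy is to realise the maximal unramified subextension of $F(\eta)/F_\chi$ (of degree $f$) as a partial splitting field of $B$: base-changing $B$ along the corresponding unramified degree-$f$ extension $E$ of $Z(B)$, the cyclic-algebra description of $D_\eta$ from \cref{sec:extending-Galois-Deta} together with \cref{lem:s-star} should identify $B\otimes_{Z(B)}E\simeq M_f(B')$ with $B'$ Brauer-equivalent to a skew power series algebra on the totally ramified residue piece, of Schur index $e s_\eta$ (cf.\ \cref{full-centre-of-skew-power-series-ring}). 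Combined with the identity $m s_\chi = e f s_\eta$ this forces $m=f$ and $s_\chi = e s_\eta$. The technical heart is the compatibility check between the cyclic-algebra description from \cref{sec:extending-Galois-Deta} and the skew power series construction of \cref{sec:skew-power-series}, in which the careful uniqueness of the extension of Galois automorphisms in \cref{extending-tau-to-D} plays an essential role.
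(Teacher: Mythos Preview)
Your reduction to the corner $A=\epsilon(\eta)\,\Q^F(\G)\,\epsilon(\eta)\simeq M_{n_\chi/v_\chi}(D_\chi)\simeq M_{n_\eta}(B)$ and the identity $m s_\chi=efs_\eta$ are correct and parallel the paper's setup in \cref{sec:actions}. The gap is the final inference. Even granting $B\otimes_{Z(B)}E\simeq M_f(B')$ with $B'$ a skew field of index $es_\eta$, this says only that $\operatorname{ind}(B\otimes E)=es_\eta$. The general bounds $\operatorname{ind}(B\otimes E)\mid\operatorname{ind}(B)=s_\chi$ and $s_\chi\mid [E:Z(B)]\cdot\operatorname{ind}(B\otimes E)$ then give $es_\eta\mid s_\chi\mid efs_\eta$, and since $e,f$ are both $p$-powers while $s_\eta\mid p-1$, every value $s_\chi=p^a e s_\eta$ with $0\le a\le v_p(f)$ is compatible. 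Equivalently, writing $B\otimes E\simeq M_m(D_\chi\otimes E)$ and $D_\chi\otimes E\simeq M_t(B')$ gives $mt=f$, which together with $m s_\chi=efs_\eta$ just reproduces the relation you already had. So ``this forces $m=f$'' is not justified; your plan of combining Nickel's upper divisibility with a lower bound $s_\chi\ge es_\eta$ has the same defect---the two constraints do not meet.

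The paper avoids this by proving \emph{indecomposability} of explicit idempotents rather than computing Brauer invariants after base change. In the totally ramified case (\cref{fQGf-skew-field}) it shows $f_\eta^{(j)}\Q^F(\G)f_\eta^{(j)}$ is a skew field by reducing to the question of whether a concrete cyclic algebra over $\Q^{F_\chi}(\Gamma^{w_\chi})$ is a division ring, and then verifying Wedderburn's norm criterion by an explicit order computation in the norm factor group (\cref{totally-ramified-order}), using that $\hat H^0$ of units in a totally ramified cyclic $p$-extension is cyclic of the full degree. This yields $n_\chi=v_\chi n_\eta$ directly via Jordan--H\"older, with no slack. The general case (\cref{fQGf-skew-field-general}, \cref{invariants-of-DFchi}) is then obtained by unramified base change to $W=F(\eta)^{\langle\tau^f\rangle}$ and Galois descent of indecomposability. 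What your argument is missing is precisely a proof that $B$ already contains an $M_f$-factor \emph{before} base change; establishing that is tantamount to the paper's indecomposability result, and the norm computation there is the genuine content you have not supplied.
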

Recall that $n_\eta$ and $s_\eta$ are independent of the choice of $\eta$, see \eqref{eta-i}.
\begin{theorem} \label{D-conj}
{Let $\chi\in\Irr(\G)$ be an irreducible character with open kernel, and let $\eta\mid\res^\G_H\chi$ be an irreducible constituent of its restriction to $H$.}
{Suppose that $F(\eta)/F_\chi$ is totally ramified.}
Then
\[\DFchi \simeq \Quot\left(\OO_{D_\eta} [[ X, \tau, \tau-\id ]]\right)  .\]
\end{theorem}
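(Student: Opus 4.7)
The plan is to prove the theorem in two stages, mirroring the organisation of \cref{sec:TORC,sec:general-f}: I will first treat the totally ramified case ($f=1$), where \cref{Qpeta-Qpchi-cyclic} forces $\tau^e = \id_{F(\eta)}$ and hence $D_\eta^{\langle\tau^e\rangle} = D_\eta$, so that the statement reduces to $D_\chi \simeq \Quot(\OO_{D_\eta}[[X;\tau,\tau-\id]])$; then I will bootstrap to the general case via the tower $F_\chi \subseteq F(\eta)^{\langle\tau^e\rangle} \subseteq F(\eta)$ of \eqref{eq:diagram}, in which $F(\eta)^{\langle\tau^e\rangle}/F_\chi$ is totally ramified of degree $e$ and $F(\eta)/F(\eta)^{\langle\tau^e\rangle}$ is unramified of degree $f$.

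In the totally ramified case, I will realise $D_\chi$ as a corner ring of $\Q^F(\G)$ cut out by an idempotent $f_\eta^{(j)}$ from \cref{def:f-idempotent}. Starting from the identification $\tilde D_\eta \simeq f_\eta^{(j)}\Q^F(\Gamma_0)[H]f_\eta^{(j)}$ in \eqref{tDeta-feta} and using the decomposition $\Q^F(\G) = \bigoplus_{i=0}^{p^{n_0}-1} \Q^F(\Gamma_0)[H]\gamma^i$ from \eqref{eq:Q-decomposition}, the key observation is that conjugation by $\gamma^i$ preserves $\epsilon(\eta)$ precisely when $v_\chi \mid i$, by the minimality defining $v_\chi$ together with \cref{Qpeta-Qpchi-cyclic}. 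Hence $f_\eta^{(j)}\Q^F(\G) f_\eta^{(j)}$ is spanned over $\tilde D_\eta$ by powers of $\gamma' \colonequals \gamma^{v_\chi}$. Conjugation by $\gamma'$ restricts to $\tau \in \Gal(F(\eta)/F_\chi)$ on the centre $F(\eta)$ of $\tilde D_\eta$, and by the uniqueness in \cref{extending-tau-to-D} it can be rectified to act as $\tau$ on all of $D_\eta$ by multiplying $\gamma'$ by a suitable unit $u \in \tilde D_\eta^\times$; the resulting generator $\gamma''$ satisfies $\gamma'' d = \tau(d)\gamma''$ for $d \in D_\eta$, and $(\gamma'')^{p^{n_0}/v_\chi}$ lands in the centre of $\Q^F(\Gamma_0)$. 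Substituting $X \colonequals \gamma'' - 1$ yields the skew power series relation $Xd = \tau(d)X + (\tau-\id)(d)$, identifying the corner ring with $\OO_{D_\eta}[[X;\tau,\tau-\id]]$ after tracking the natural integral structure; passage to total rings of quotients, combined with the dimension counts from \cref{dim-Schur} and \cref{full-centre-of-skew-power-series-ring}, pins down $D_\chi$ exactly.

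The general case will be handled by descending from $D_\eta$ to the sub-skew field $D_\eta^{\langle\tau^e\rangle}$ of \cref{lem:s-star}, whose centre $F(\eta)^{\langle\tau^e\rangle}$ is totally ramified over $F_\chi$ of degree $e$ and on which the induced action of $\tau$ has order $e$. Enlarging the base appropriately so as to place ourselves in the totally ramified setting, and tracking the degree and invariant changes through \cref{changing-v} and \cref{dim-Schur}, reduces the statement to the previous case applied to $D_\eta^{\langle\tau^e\rangle}$. The main technical obstacle will be the twisting step in the totally ramified argument: producing the unit $u$ that rectifies conjugation by $\gamma'$ to the canonical $\tau$ is a Noether--Skolem computation, and one must verify that the central element $(\gamma'')^{p^{n_0}/v_\chi}$ obtained in this way corresponds, under the substitution $X = \gamma''-1$, to the generator $(1+X)^{(F(\eta):F_\chi)}-1$ of the centre of the skew power series ring identified in \cref{full-centre-of-skew-power-series-ring}. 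Keeping this cocycle data consistent through the identification is where the computation will be most delicate.
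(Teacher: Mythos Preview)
Your outline in the totally ramified case follows the same architecture as the paper (corner ring cut out by $f_\eta^{(j)}$, Skolem--Noether twist to a $\gamma''$ with $\gamma''d=\tau(d)\gamma''$, substitution $X=\gamma''-1$), but there is a circularity. To ``realise $D_\chi$ as a corner ring'' you need $f_\eta^{(j)}$ to be \emph{indecomposable} in $\Q^F(\G)$, and to match dimensions you invoke \cref{dim-Schur}. In the paper, however, the proof of \cref{dim-Schur} in the totally ramified case (\cref{sec:dimSchur}) \emph{rests on} the indecomposability statement \cref{fQGf-skew-field}, and that in turn is established by a substantial argument: the corner ring is reformulated as a cyclic algebra $A$ over $\Q^{F_\chi}(\Gamma^{w_\chi})$, and $A$ is shown to be a skew field by computing the order of a specific element in a norm factor group (\cref{totally-ramified-order}). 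Your proposal contains no substitute for this step. One could in principle bypass it by comparing dimensions of source and target over $\Q^{F_\chi}(\Gamma_0)$ (rather than over the centre, which you do not yet know) and then \emph{deduce} indecomposability from \cref{full-centre-of-skew-power-series-ring}; but that is not what you wrote, and carrying it out requires tracking precisely where $\gamma_0$ lands under $\gamma''\mapsto 1+X$, which is exactly the ``cocycle data'' you flag as delicate.

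In the general case your reduction does not work as stated. You propose to ``apply the totally ramified case to $D_\eta^{\langle\tau^e\rangle}$'', but the totally ramified case is a statement about $D_{\eta'}$ for an irreducible constituent $\eta'\mid\res^\G_H\chi$, and $D_\eta^{\langle\tau^e\rangle}$ is not of that form over any base change of $F$ (for any intermediate $F_\chi\subseteq E\subseteq F(\eta)$ one has $D_\eta^E=D_\eta$, as in \cref{item:s-Q-W}). The paper's route is different: it changes base to $W\colonequals F(\eta)^{\langle\tau^f\rangle}$, the maximal \emph{unramified} subextension, so that $W(\eta)/W$ is totally ramified and the first part yields $D_\chi^W\simeq\Quot(\OO_{D_\eta}[[X;\tau^W,\tau^W-\id]])$ with $\tau^W=(\tau^F)^f$. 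It then constructs Galois-averaged idempotents $\ff_{\eta_i}^{(j)}$, proves their indecomposability in $\Q^F(\G)$ by reducing to \cref{fQGf-skew-field} over $W$, and obtains $D_\chi^F$ as the $\langle\tau^e\rangle$-fixed part of (a sum of conjugates of) $D_\chi^W$; the identification with $\Quot(\OO_{D_\eta^{\langle\tau^e\rangle}}[[X;\tau,\tau-\id]])$ then comes from taking $\langle\tau^e\rangle$-invariants in the crossed-product description. Your ``enlarging the base'' should be to $W$, not to $F(\eta)^{\langle\tau^e\rangle}$, and the passage from $D_\chi^W$ to $D_\chi^F$ is a descent step, not a direct application of the totally ramified case.
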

}

We will prove \cref{dim-Schur,D-conj} in \cref{sec:TORC}.
\begin{remark} \label{rmk:totally-ramified-hypothesis}
    Two important cases in which all the extensions $F(\eta)/ F_{\chi}$ are totally ramified are the following:
    \begin{enumerate}
        \item $F$ is a finite extension of $\QQ_p$ of ramification degree prime to $p$, and $H$ is such that $p\nmid q-1$ holds for every prime factor $q\mid \# H$;
        \item $\G\simeq H\times \Gamma$ is a direct product (in fact, we have $F(\eta)=F_\chi$ in this case).
    \end{enumerate}
    In particular, the first condition holds for all pro-$p$-groups $\G$, so our results generalise those of \cite[\S2]{Lau}.
    The first assertion holds because $F(\eta)/ F_{\chi}$ is a subquotient of a base change of a cyclotomic extension, and the second assertion follows from \cite[Theorem~4.21]{Isaacs}.

    {It is expected that even in the case of arbitrary ramification, $D_\chi$ is isomorphic to the total ring of quotients of a skew power series ring as in \cref{sec:skew-power-series}.}
\end{remark}

In arithmetic applications, it is often desirable to have a description of all maximal orders in $\Q^F(\G)$, which we now provide; this result won't be needed in the rest of this work. 
{Suppose that for each $\chi\in\Irr(\G)$, the extension $F(\eta)/F_\chi$ is totally ramified.}
For each $\chi\in\Irr(\G)$, let {$\Sigmachi_\chi\colonequals \OO_{D_\eta} [[ X, \tau, \tau-\id ]]$} be the skew power series ring occurring in \cref{D-conj}, and let $\Dchi_\chi\colonequals\Quot(\Sigmachi_\chi)$ be its skew field of fractions.

\begin{corollary} \label{all-maximal-orders-in-QG}
{Suppose that for each $\chi\in\Irr(\G)$, the extension $F(\eta)/F_\chi$ is totally ramified.}
Then every maximal $\Lambda^{\OO_ F}(\Gamma_0)$-order in $\Q^ F(\G)$ is isomorphic to one of the form
\[\bigoplus_{\chi\in\Irr(\G)/\sim_{ F}} u_\chi M_{\nFchi} \left( \Sigmachi_\chi \right) u_\chi^{-1} \]
where $u_\chi\in \GL_{\nFchi}\left(\Dchi_\chi\right)$.
\end{corollary}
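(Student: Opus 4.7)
The plan is to pass to the Wedderburn decomposition of $\Q^ F(\G)$, apply \cref{Sigmachi-unique} to identify the unique maximal order in each component's underlying skew field, and then classify maximal orders in the resulting matrix rings via Morita theory.

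First I would show that an arbitrary maximal $\Lambda^{\OO_ F}(\Gamma_0)$-order $\Delta\subset\Q^ F(\G)$ splits as $\Delta=\bigoplus_\chi \Delta\epsilon_\chi$, with each summand a maximal $\OO_{\cent(\DFchi)}$-order in $M_{\nFchi}(\DFchi)$. For this it suffices to show that each primitive central idempotent $\epsilon_\chi$ lies in $\Delta$. But $\epsilon_\chi\in\cent(\Q^ F(\G))$ is integral over $\cent(\Lambda^{\OO_ F}(\Gamma_0))$ (since $\epsilon_\chi^2=\epsilon_\chi$), so the $\Lambda^{\OO_ F}(\Gamma_0)$-subalgebra of $\Q^ F(\G)$ generated by $\Delta$ and $\epsilon_\chi$ is still an order containing $\Delta$, and maximality forces equality.

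Next, using \cref{D-conj} to identify $\DFchi$ with $\Dchi_\chi=\Quot(\Sigmachi_\chi)$, \cref{Sigmachi-unique} exhibits $\Sigmachi_\chi$ as the unique maximal $\OO_{\cent(\Dchi_\chi)}$-order in $\Dchi_\chi$. Invoking Morita equivalence, every maximal $\OO_{\cent(\Dchi_\chi)}$-order in $M_{\nFchi}(\Dchi_\chi)$ is of the form $\End_{\Sigmachi_\chi}(P)$ for some full right $\Sigmachi_\chi$-lattice $P\subset\Dchi_\chi^{\nFchi}$. Choosing an adapted $\Dchi_\chi$-basis realises $P$ as $u_\chi\Sigmachi_\chi^{\nFchi}$ for some $u_\chi\in\GL_{\nFchi}(\Dchi_\chi)$, and accordingly $\End_{\Sigmachi_\chi}(P)=u_\chi M_{\nFchi}(\Sigmachi_\chi)u_\chi^{-1}$. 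Reassembling over all $\chi$ gives the claim.

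The main obstacle lies in the Morita-theoretic step, since $\OO_{\cent(\Dchi_\chi)}$ is two-dimensional and therefore outside the scope of the classical Dedekind-based theory of maximal orders. One way out is to exploit the fact that $\Sigmachi_\chi$ is a noncommutative UFD (see the remark in \cref{sec:further-props-skew}), ensuring that every full $\Sigmachi_\chi$-lattice has the required form. Alternatively, one can localise at each height~1 prime of $\OO_{\cent(\Dchi_\chi)}$, combining the reflexivity argument used in the proof of \cref{Sigmachi-unique} with the classical fact that maximal orders in a matrix ring over a skew field with complete DVR centre are all conjugate.
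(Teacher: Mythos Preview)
Your overall strategy matches the paper's: split along the Wedderburn components (the paper cites \cite[Theorem~10.5(ii)]{MO} for this, which is proved exactly as you sketch via the integrality of the $\epsilon_\chi$), invoke \cref{Sigmachi-unique} for uniqueness of the maximal order in each $\Dchi_\chi$, and then classify the maximal orders in each matrix block $M_{\nFchi}(\Dchi_\chi)$.

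The divergence is in that last step. The paper does not attempt a Morita-theoretic argument; instead it applies Ramras's classification \cite[Theorem~6.6]{Ramras2} of maximal orders in a central simple algebra over a two-dimensional regular local ring, which directly gives that every maximal $\OO_{\cent(\Dchi_\chi)}$-order in $M_{\nFchi}(\Dchi_\chi)$ is conjugate to $M_{\nFchi}(\Sigmachi_\chi)$. This is precisely the black box that absorbs the difficulty you flag.

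Your proposed workarounds are not complete as stated. Knowing that $\Sigmachi_\chi$ is a noncommutative UFD tells you that height-one reflexive ideals are principal, but it does not by itself force an arbitrary full reflexive $\Sigmachi_\chi$-lattice $P\subset\Dchi_\chi^{\nFchi}$ to be free; you still need that finitely generated reflexive modules over such a maximal order are free, which is essentially the content of Ramras's work over two-dimensional regular local bases. The localisation alternative also has a genuine gap: conjugacy at every height-one prime yields local units $u_{\chi,\pp}$, but patching these into a single $u_\chi\in\GL_{\nFchi}(\Dchi_\chi)$ is exactly where the two-dimensional difficulty lives, and reflexivity alone does not furnish a global conjugating element. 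Either route, carried through, would amount to reproving Ramras's theorem; it is cleaner to cite it.
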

\begin{proof}
{\Cref{Sigmachi-conjugate} states that the maximal $\OO_{\cent(\Dchi_\chi)}$-orders in $\Dchi_\chi$ are of the form $\upsilon_\chi\Sigmachi_\chi \upsilon_\chi^{-1}$ for $\upsilon_\chi\in\Dchi_\chi^\times$. Therefore each $M_{\nFchi}(\upsilon_\chi\Sigmachi_\chi \upsilon_\chi^{-1})=\upsilon_\chi M_{\nFchi}(\Sigmachi_\chi) \upsilon_\chi^{-1}$ is a maximal $\OO_{\cent(\Dchi_\chi)}$-order in $M_{\nFchi}(\Dchi_\chi)$ by \cite[Theorem~8.7]{MO}, and therefore so is each $u_\chi M_{\nFchi}(\Sigmachi_\chi) u_\chi^{-1}$ for $u_\chi\in\GL_{n_\chi}(\Dchi_\chi)$.}

{Conversely, $\OO_{\cent(\Dchi_\chi)}=\OO_{F_\chi}[[T]]$ is a regular local ring of dimension two, so we can apply Ramras's result \cite[Theorem~5.4]{Ramras2} as in the proof of \cref{Sigmachi-conjugate}. 
{The quasi-local condition is satisfied by combining the argument there with \cite[Proposition~5.14]{CR}, and $M_{\nFchi}(\Sigmachi_\chi)$ has finite global dimension because $\Sigmachi_\chi$ does, since the global dimension is Morita invariant.}
We conclude that for every maximal $\OO_{\cent(\Dchi_\chi)}$-order $\mathfrak M_\chi$ in $M_{\nFchi}(\Dchi_\chi)$, there is some $u_\chi\in\GL_{n_\chi}(\Dchi_\chi)$ such that $\mathfrak M_\chi=u_\chi M_{\nFchi}(\Sigmachi_\chi) u_\chi^{-1}$.}

{The module $\OO_{\cent(\DFchi)}$ is finitely generated over $\Lambda^{\OO_ F}(\Gamma_0)$. It follows that $M_{\nFchi}(\Sigmachi_\chi)$ is also a maximal $\Lambda^{\OO_ F}(\Gamma_0)$-order in $M_{n_\chi}(\Dchi_\chi)$. The proof is concluded by invoking the fact that maximal orders behave well with respect to direct sums, cf.~\cite[Theorem~10.5(ii)]{MO}.}
\end{proof}

{Note that the proof of \cref{all-maximal-orders-in-QG} really concerns maximal orders in semisimple algebras whose skew field parts are total rings of quotients of skew power series rings as studied in \cref{sec:skew-power-series}.}

\section{Galois action and \texorpdfstring{$\Gamma$}{Γ}-action} \label{sec:actions}
Let $ F$ be a finite extension of $\QQ_p$. 
The Wedderburn decomposition of the group ring $ F[H]$ is well understood: the skew fields occurring can be described explicitly. Therefore the same holds for $\Q^ F(\Gamma_0)[H]$. A fundamental idea is that together with the decomposition \eqref{eq:Q-decomposition}, this provides a way to attack $\Q^ F(\G)$.

\subsection{\texorpdfstring{$\pc$}{δγ} and \texorpdfstring{$\pt$}{δτ}} \label{sec:deltas}

Consider the abstract Wedderburn isomorphism of the central simple $ F(\eta)$-algebras
\begin{equation} \label{eq:Wedderburn-eta-QQ}
	 F[H]\epsilon(\eta)\simeq M_{\nFeta}(D_\eta);
\end{equation}
{this is the $\eta$-component of \eqref{eq:FH-Wedderburn}, and $\epsilon(\eta)$ is the idempotent defined in \eqref{eq:def-epsilon}.}
Upon tensoring with $\Q^ F(\Gamma_0)$, this gives rise to the following isomorphism of central simple $\Q^{ F(\eta)}(\Gamma_0)$-algebras
\begin{equation} \label{eq:Wedderburn-eta}
	\Q^ F(\Gamma_0)[H]\epsilon(\eta) \simeq M_{\nFeta}(\tD_\eta),
\end{equation}
{where $\tD_\eta$ is the skew field in the $\eta$-component of the Wedderburn decomposition \eqref{eq:QFGamma0-Wedderburn} of $\Q^ F(\Gamma_0)[H]$.}
On the left hand side, we have an action of conjugation by $\gamma^{\vf}$.  This is because $H$ is a normal subgroup of $\G$, and  $\gamma^{\vf}e({}^\sigma\eta)\gamma^{-\vf}=e\left({}^{\tauf^{-1}\sigma}\eta\right)$ for all $\sigma\in\Gal( F(\eta)/ F)$, so conjugation by $\gamma^{\vf}$ acts by permuting the summands $e({}^\sigma\eta)$ of $\epsilon(\eta)$.

\begin{remark}
	Note that $\vf$ is the minimal positive integer for which conjugation by $\gamma^{\vf}$ acts on $ F[H]\epsilon(\eta)$. Indeed, in the proof of \cref{Qpeta-Qpchi-cyclic} we have seen that for $0\le k,k'<\vf$ and $\psi,\psi'\in\Gal( F(\eta)/ F)$, we have ${}^{\psi}({}^{\gamma^k}\eta)\ne {}^{\psi'}({}^{\gamma^{k'}}\eta)$ unless $\psi=\psi'$ and $k=k'$. Hence conjugation by $\gamma^k$ does not preserve $ F[H]\epsilon(\eta)$.
\end{remark}

{Recall that $\tauf$ is an automorphism of $D_\eta$. Setting $\tauf(\gamma_0)\colonequals\gamma_0$, this extends to an automorphism of $\tD_\eta$, which we will also denote by $\tauf$.
So} on the right hand side of \eqref{eq:Wedderburn-eta}, we have an entry-wise action of $\tauf$. We shall now relate these two actions.
Let $x$ resp. $X$ be elements corresponding to each other under the Wedderburn isomorphism \eqref{eq:Wedderburn-eta}:
\begin{align*}
	\Q^ F(\Gamma_0)[H]\epsilon(\eta) &\simeq M_{\nFeta}(\tD_\eta) \\
	x &\leftrightarrow X
\end{align*}
We adopt the convention of denoting elements corresponding to each other by the same letter, lowercase on the left and uppercase on the right.

To emphasise where each action is coming from, we introduce the following notation.
We write $\gamma^{\vf} x\gamma^{-\vf}={}^{\gamma^{\vf}} x$ for the $\gamma^{\vf}$-conjugate on the left, and $\pc(X)$ for the corresponding element on the right. Similarly, on the right hand side we write $\tauf(X)$ for the matrix obtained from $X$ by applying $\tauf$ entry-wise, and let $\pt(x)$ be the corresponding element on the left hand side. This defines automorphisms on both sides of \eqref{eq:Wedderburn-eta}:
\[\pc\in\Aut(M_{\nFeta}(\tD_\eta)), \quad \pt\in\Aut(\Q^ F(\Gamma_0)[H]\epsilon(\eta))  .\]

\begin{remark} \label{rem:things-happen-over-QpH}
	The results of this section are in fact valid over $F[H]$, that is, before tensoring with $\Q^ F(\Gamma_0)$: indeed, since $\Gamma_0$ is central, conjugation acts trivially, and $\tauf$ was extended so that $\tauf(\gamma_0)=\gamma_0$. We nevertheless state everything in terms of $\Q^ F(\Gamma_0)$-tensored algebras, as this is the form in which these results will be used in the sequel.
\end{remark}

\begin{proposition} \label{prop:trivial-on-centre}
	The two actions just defined agree on the respective centres. In formul\ae: on the group ring side,
	\begin{align*}
		{}^{\gamma^{\vf}}(-)\big|_{\cent(\Q^ F(\Gamma_0)[H]\epsilon(\eta))} &= \pt\big|_{\cent(\Q^ F(\Gamma_0)[H]\epsilon(\eta))}  ,
		\intertext{and equivalently on the matrix ring side,}
		\pc\big|_{\cent\left(M_{\nFeta}(\tD_\eta)\right)} &= \tauf\big|_{\cent\left(M_{\nFeta}(\tD_\eta)\right)}  .
	\end{align*}
\end{proposition}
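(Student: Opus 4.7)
The plan is to check the equivalent identity on the matrix side, namely that both $\pc$ and entry-wise $\tauf$ act on the centre $\cent(M_{\nFeta}(\tD_\eta))\simeq \Q^{F(\eta)}(\Gamma_0)$ as the Galois automorphism $\tauf\in\Gal(F(\eta)/F)$ (extended trivially on the $\Gamma_0$-part). The equivalent identity on the group-ring side will then follow by transport through the Wedderburn isomorphism. Both actions are $\Q^F(\Gamma_0)$-algebra automorphisms: conjugation by $\gamma^{\vf}$ fixes $\Q^F(\Gamma_0)$ because $\Gamma_0$ is central in $\G$, and entry-wise $\tauf$ fixes $\Q^F(\Gamma_0)$ because $\tauf$ was extended to $\tD_\eta$ by $\tauf(\gamma_0)\colonequals\gamma_0$. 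Since $\Gal(\Q^{F(\eta)}(\Gamma_0)/\Q^F(\Gamma_0))\simeq\Gal(F(\eta)/F)$, it suffices to show that the induced automorphism of the centre equals $\tauf$ in each case.

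The main tool is the central character $\omega_\eta\colon z\mapsto \eta(z)/\eta(1)$, extended $\Q^F(\Gamma_0)$-linearly. I would first record the standard fact that, under the Wedderburn isomorphism \eqref{eq:Wedderburn-eta}, a central element $z\in\cent(\Q^F(\Gamma_0)[H]\epsilon(\eta))$ corresponds to the scalar matrix $\omega_\eta(z)\cdot\mathbf{1}_{\nFeta}\in M_{\nFeta}(\tD_\eta)$. Applying entry-wise $\tauf$ to this scalar matrix yields $\tauf(\omega_\eta(z))\cdot\mathbf{1}_{\nFeta}$, so the induced automorphism of the centre is, tautologically, the Galois element $\tauf$.

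For the conjugation action, a direct change-of-variable in the defining sum $\eta(z)=\sum_h c_h\eta(h)$ (for $z=\sum_h c_h h$) gives $\eta(\gamma^{\vf}z\gamma^{-\vf})={}^{\gamma^{\vf}}\eta(z)$, and the defining property ${}^{\gamma^{\vf}}\eta={}^{\tauf}\eta$ of \cref{Qpeta-Qpchi-cyclic} then yields $\omega_\eta(\gamma^{\vf}z\gamma^{-\vf})=\tauf(\omega_\eta(z))$. Hence conjugation by $\gamma^{\vf}$ also induces $\tauf$ on the centre, as required. I do not expect any serious obstacle here; the only point requiring care is the identification of central elements with their scalar images via the central character, but this is entirely standard representation theory, and the compatibility of the $\G$-action on $\Irr(H)$ with the Galois action on character values is precisely the content of the relation ${}^{\gamma^{\vf}}\eta={}^{\tauf}\eta$ built into the definition of $\vf$ and $\tauf$.
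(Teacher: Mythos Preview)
Your proposal is correct and follows essentially the same approach as the paper: both reduce to showing that the scalar attached to a central element under the Wedderburn isomorphism is $\omega_\eta(z)=\eta(z)/\eta(1)$, and then use ${}^{\gamma^{\vf}}\eta={}^{\tauf}\eta$ to conclude that conjugation acts as $\tauf$ on this scalar. The only difference is packaging: you invoke the identification $z\leftrightarrow\omega_\eta(z)\cdot\mathbf 1_{\nFeta}$ as a ``standard fact'', whereas the paper makes it explicit via the extension-of-scalars diagram to $F^\al[H]$ (tracking all Galois components $\rho_{{}^\sigma\eta}$ simultaneously), which is precisely what justifies that identification.
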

\begin{proof}
	By definition of $\pt$ and $\pc$, the two statements are equivalent, so it is enough to prove one of them: we shall prove the former. In the proof, we will work over the group ring $ F[H]$ instead of $\Q^ F(\Gamma_0)[H]$, which is permissible by \cref{rem:things-happen-over-QpH}.
	
	Consider the following commutative diagram, which we explain below.
	\[
	\begin{tikzcd}[column sep=5em]
		 F[H]\epsilon(\eta) \ar[r, "\sim"] \ar[d, hook] & M_{\nFeta}(D_\eta) \ar[d, hook] \\
		 F^\al[H]\epsilon(\eta) \ar[d, Rightarrow, no head] \ar[r, "\sim"] & \displaystyle\bigoplus_{\sigma \in \Gal( F(\eta)/ F)} M_{\eta(1)}( F^\al) \ar[d, Rightarrow, no head] \\
		\displaystyle\bigoplus_{\sigma\in\Gal( F(\eta)/ F)}  F^\al[H] e({}^\sigma\eta) \ar[r,"\sim", "\bigoplus_\sigma \rho_{{}^\sigma\eta}"'] & \displaystyle\bigoplus_{\sigma \in \Gal( F(\eta)/ F)} M_{\eta(1)}( F^\al) E({}^\sigma\eta)
	\end{tikzcd}
	\]
	The first row is the Wedderburn isomorphism \eqref{eq:Wedderburn-eta-QQ}. The second row is obtained by tensoring with $ F^\al$ over $F$; recall that $ F^\al$ is a splitting field for $D_\eta$, and that $\eta(1)=\nFeta \sFeta$. Since the tensor product is taken over $ F$ and not over $ F(\eta)$, we get a component for each embedding $ F(\eta)\hookrightarrow F^\al$.
	The vertical map on the left is inclusion. On the right, the map is induced by entry-wise application of the embedding 
	\begin{align*}
		D_\eta&\hookrightarrow D_\eta\otimes_{ F(\eta),\sigma} F^\al\simeq M_{\eta(1)}( F^\al)\\
		x&\mapsto x\otimes 1
	\end{align*}
	in each component.
	The third row of the diagram is induced from the second row by the decomposition of $\epsilon(\eta)$ into components $e({}^\sigma\eta)$. 
	Let $E({}^\sigma\eta)\in M_{\eta(1)}(F^\al)$ {be the matrix in the $\sigma$-component of} the image of $e({}^\sigma\eta)$ under the isomorphism in the second row; {in other words,} $E({}^\sigma\eta)$ is the identity matrix in the $\sigma$-component.
	We write 
	\[\rho_{{}^\sigma\eta}:  F^\al[H] e({}^\sigma\eta)\xrightarrow{\sim} M_{\eta(1)}( F^\al) E({}^\sigma\eta) {=} M_{\eta(1)}( F^\al)\]
	for the $\sigma$-part of the map in the third row. This is a representation of $H$ with character ${}^\sigma\eta$.
	
	Let $z\in \cent( F[H]\epsilon(\eta))$ be a central element. Then for all $\sigma\in\Gal( F(\eta)/ F)$,
	\[\rho_{{}^\sigma\eta}(z)=\frac{1}{\eta(1)} \Tr\left(\rho_{{}^\sigma\eta}(z)\right)\cdot E\left({}^\sigma\eta\right)=\frac{1}{\eta(1)} {}^\sigma\eta(z)\cdot E\left({}^\sigma\eta\right)  ,\]
	where by abuse of notation, ${}^\sigma\eta$ denotes the $ F^\al$-linear extension of the character ${}^\sigma\eta:H\to F^\al$ to the group ring $ F^\al[H]$.
	
	A central element of $M_{\nFeta}(D_\eta)$ is of the form $Z_\alpha=\alpha \mathbf 1_{\nFeta}$ where $\alpha\in \cent(D_\eta)= F(\eta)$. Let $z_\alpha\in \cent( F[H]\epsilon(\eta))$ be the corresponding central element in the group ring under the top horizontal map. By commutativity of the diagram, the images of $Z_\alpha$ and $z_\alpha$ in the bottom right corner coincide:
	\begin{equation*}
		\sum_{\sigma\in\Gal( F(\eta)/ F)} \sigma\left(Z_\alpha\right) E\left({}^\sigma\eta\right) = \sum_{\sigma\in\Gal( F(\eta)/ F)} \frac{1}{\eta(1)}{}^\sigma\eta(z_\alpha)E\left({}^\sigma\eta\right)  .
	\end{equation*}
	Using this together with the fact that $F(\eta)/F$ is abelian, we compute the image of ${}^{\gamma^{\vf}}z_\alpha$:
	\begin{align*}
		\sum_{\sigma\in\Gal( F(\eta)/ F)} \frac{1}{\eta(1)}{}^\sigma\eta\left({}^{\gamma^{\vf}}z_\alpha\right)E\left({}^\sigma\eta\right) = \sum_{\sigma\in\Gal( F(\eta)/ F)} \sigma\left(Z_{\tauf(\alpha)}\right) E\left({}^\sigma\eta\right){.}
	\end{align*}
	The right hand side is the image of $Z_{\tauf(\alpha)}$. The corresponding group ring element is $z_{\tauf(\alpha)}$, which, by definition of $\pt$, is the same as $\pt(z_\alpha)$. This concludes the proof.
\end{proof}

\Cref{prop:trivial-on-centre} shows that the automorphism $\pt^{-1}\circ{}^{\gamma^{\vf}}(-)$ of the central simple $\Q^{ F(\eta)}(\Gamma_0)$-algebra $\Q^ F(\Gamma_0)[H]\epsilon(\eta)$ is trivial on the centre. Pursuant to the Skolem--Noether theorem, see \cite[Theorem~3.62]{CR}, there is a unit $\yFeta\in \Q^ F(\Gamma_0)[H]\epsilon(\eta)^\times$ such that for all $x\in\Q^ F(\Gamma_0)[H]\epsilon(\eta)$,
\begin{align}
	\pt^{-1} \left({}^{\gamma^{\vf}}x\right) &= \yFeta \cdot x \cdot \yFeta^{-1}  . \notag
	\intertext{Equivalently we may write}
	{}^{\gamma^{\vf}}x &= \pt(\yFeta) \cdot \pt (x) \cdot \pt(\yFeta^{-1})  . \label{eq:yeta}
	\intertext{The corresponding equations for matrices also hold: 
		if $\YFeta\in\GL_{\nFeta}(\tD_\eta)$ denotes the element corresponding to $\yFeta$ under \eqref{eq:Wedderburn-eta}, then for all $X\in M_{\nFeta}(\tD_\eta)$,}
	\pc(X)&=\tauf(\YFeta) \cdot \tauf(X) \cdot \tauf(\YFeta^{-1})  . \label{eq:Yeta}
	\intertext{By \cref{rem:things-happen-over-QpH}, we may in fact assume $\yFeta\in  F[H]\epsilon(\eta)^\times$ and $\YFeta\in \GL_{\nFeta}(D_\eta)$. It should be noted that the Skolem--Noether theorem only determines the units $\yFeta$ and $\YFeta$ up to central units in their respective ambient rings.
		By induction, \eqref{eq:yeta} and \eqref{eq:Yeta} admit the following generalisations: for all $\vf \mid i$,}
	{}^{\gamma^i}x &= \pt(\yFeta) \cdots \pt^{i/\vf}(\yFeta) \cdot \pt^{i/\vf} (x) \cdot \pt^{i/\vf}\left(\yFeta^{-1}\right)\cdots \pt\left(\yFeta^{-1}\right)  , \label{eq:yeta-higher} \\
	\pc^{i/\vf}(X)&=\tauf(\YFeta)\cdots \tauf^{i/\vf}(\YFeta) \cdot \tauf^{i/\vf}(X) \cdot \tauf^{i/\vf}\left(\YFeta^{-1}\right)\cdots \tauf\left(\YFeta^{-1}\right)  . \label{eq:Yeta-higher}
\end{align}
We shall write 
\begin{equation}
    A_{i/\vf}\colonequals \tauf(\YFeta)\cdots \tau^{i/\vf}(\YFeta)\in\GL_{\nFeta}(D_\eta)
\end{equation}
for the conjugating element in \eqref{eq:Yeta-higher}, and 
\begin{equation}
    \af_{i/\vf}\colonequals \pt(\yFeta) \cdots \pt^{i/\vf}(\yFeta)\in  F[H]\epsilon(\eta)^\times
\end{equation}
for the corresponding element in \eqref{eq:yeta-higher}. 
Here we work with a fixed character $\eta$, which is why it's convenient to suppress it from the notation $a_{i/v_\chi}$.
It follows from the definitions that if $\vf\mid i,j$, then 
\begin{equation} \label{eq:AiAj}
	A_{i/\vf}\cdot\tau^{i/\vf} (A_{j/\vf})=A_{(i+j)/\vf}  ,
\end{equation} 
and similarly 
\begin{equation} \label{eq:aiaj}
	\af_{i/\vf}\cdot\pt^{i/\vf} (\af_{j/\vf})=\af_{(i+j)/\vf}  .
\end{equation}
The element $\af_{p^{n_0}/\vf}$ is central in $\Q^ F(\Gamma_0)[H]\epsilon(\eta)$. Indeed, for all $x\in\Q^ F(\Gamma_0)[H]\epsilon(\eta)$:
\begin{equation} \label{eq:a-pn0-central}
	x={}^{\gamma_0} x = {}^{\gamma^{p^{n_0}}} x = \af_{p^{n_0}/\vf} \cdot \pt^{p^{n_0}/\vf} (x) \cdot \left(\af_{p^{n_0}/\vf}\right)^{-1}=\af_{p^{n_0}/\vf} \cdot x \cdot \left(\af_{p^{n_0}/\vf}\right)^{-1}  .
\end{equation}

\begin{definition}\label{def:gpc}
	Let $\gcp\colonequals (\af_1)^{-1}\gamma^{\vf}=\pt\left(\yFeta^{-1}\right)\gamma^{\vf}$, and let $\Gcp$ be the procyclic group generated by $\gcp$.
\end{definition}

\begin{lemma} \label{power-of-gcp}
	For $j\ge 1$, we have $(\gcp)^j=a_j^{-1} \gamma^{j\vf}$.
\end{lemma}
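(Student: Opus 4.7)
The plan is to proceed by induction on $j$. The base case $j=1$ is immediate from the definition $\gcp = a_1^{-1} \gamma^{v_\chi}$, so the real work is in the inductive step.

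Assuming $(\gcp)^j = a_j^{-1} \gamma^{jv_\chi}$, I compute
\[
(\gcp)^{j+1} = (\gcp)^j \cdot \gcp = a_j^{-1} \gamma^{jv_\chi} \cdot a_1^{-1} \gamma^{v_\chi}.
\]
The central step is moving $\gamma^{jv_\chi}$ past $a_1^{-1}$. Here I invoke \eqref{eq:yeta-higher} with $i = jv_\chi$ (note $v_\chi \mid jv_\chi$), applied to $x = a_1^{-1} \in \Q^F(\Gamma_0)[H]\epsilon(\eta)^\times$, which gives
\[
\gamma^{jv_\chi} \cdot a_1^{-1} = \bigl(a_j \cdot \pt^{j}(a_1^{-1}) \cdot a_j^{-1}\bigr) \gamma^{jv_\chi}.
\]
Substituting this back and simplifying the two adjacent factors of $a_j^{-1}$ and $a_j$, I obtain
\[
(\gcp)^{j+1} = \pt^{j}(a_1^{-1}) \cdot a_j^{-1} \cdot \gamma^{(j+1)v_\chi}.
\]

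To finish, I need to identify the leading factor as $a_{j+1}^{-1}$. Since $a_1 = \pt(y_\eta)$ by definition, we have $\pt^{j}(a_1) = \pt^{j+1}(y_\eta)$; and the telescoping definition of the $a_i$ (equivalently \eqref{eq:aiaj} with $i = jv_\chi$ and $j = v_\chi$) yields $a_{j+1} = a_j \cdot \pt^{j+1}(y_\eta)$. Inverting gives $a_{j+1}^{-1} = \pt^{j+1}(y_\eta)^{-1} \cdot a_j^{-1} = \pt^{j}(a_1^{-1}) \cdot a_j^{-1}$, completing the induction.

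This is essentially a bookkeeping argument rather than a deep one; the only place that requires care is keeping track of which power of $\pt$ is applied where when commuting $\gamma^{jv_\chi}$ past elements of $\Q^F(\Gamma_0)[H]\epsilon(\eta)$, and ensuring the telescoping of the $a_i$'s matches up correctly with the noncommutative multiplication order. No genuine obstacle is expected.
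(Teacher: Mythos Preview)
Your proof is correct and follows essentially the same approach as the paper, which simply states that the result ``follows from \eqref{eq:aiaj} by induction.'' Your argument is a fully spelled-out version of that one-line proof; the only cosmetic difference is that the paper would likely multiply $\gcp$ on the left in the inductive step (using \eqref{eq:yeta} directly rather than \eqref{eq:yeta-higher}), but both routes reduce to the same cocycle identity $a_{j+1}=a_j\cdot\pt^{j}(a_1)$.
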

\begin{proof}
	Follows from \eqref{eq:aiaj} by induction.
\end{proof}

{
Similarly to \cref{changing-v}, we record how these elements vary under replacing $F$ by some intermediate field of $F(\eta)/F_\chi$. Recall that $\Gal(F(\eta)/E)=\langle\tau^f\rangle$ if $f=(E:F_\chi)$. To emphasise dependence on the base field, we write $\delta^F_\tau$ resp. $\gcpf$ for $\delta_\tau$ resp. $\gcp$.
\begin{lemma} \label{lem:gamma-chi-W}
    Let $F_\chi \subseteq E\subseteq F(\eta)$ be an intermediate field, and let $f\colonequals(E:F_\chi)$. Then $\delta^E_{\tau^f}=(\delta^F_\tau)^f$, and one can choose $y_\eta^E$ such that $\gamma^{E,\prime\prime}_\eta=(\gcpf)^f$.
\end{lemma}
\begin{proof}
    The assertion on $\delta^E_{\tau^f}$ is a direct consequence of the definitions and \cref{changing-v}. For the second assertion, recall the defining properties of $\delta^F_\tau$ resp. $\delta^E_{\tau^f}$:
    \begin{align}
        {}^{\gamma^{v_\chi^F}} x &= \delta_\tau^F\left( y_\eta^F \cdot x\cdot (y_\eta^F)^{-1}\right) & \forall x\in \Q^F(\Gamma_0)[H]\epsilon^F(\eta), \label{eq:v-chi-F-defining-property} \\
        {}^{\gamma^{v_\chi^E}} x &= \delta_{\tau^f}^E\left( y_\eta^E \cdot x\cdot (y_\eta^E)^{-1}\right) & \forall x\in \Q^E(\Gamma_0)[H]\epsilon^E(\eta) . \notag
    \end{align}
    Applying \eqref{eq:v-chi-F-defining-property} $f$ times, and using $v_\chi^E=f v_\chi^F$ from \cref{changing-v}, we obtain
    \[{}^{\gamma^{v_\chi^E}} x = \delta_\tau^F\!\left( y_\eta^F \right) \cdot \left(\delta_\tau^F\right)^2\!\left( y_\eta^F \right)\cdot \ldots \cdot \left(\delta_\tau^F\right)^f\!\left( y_\eta^F \right) \cdot \left(\delta_\tau^F\right)^f\!( x) \cdot \left(\delta_\tau^F\right)^f\!\left( y_\eta^F \right)^{-1}\cdot\ldots\cdot \delta_\tau^F\!\left( y_\eta^F \right)^{  {-1}}.\]
    Therefore if we first choose $y_\eta^F$, then
    \[y_\eta^E\colonequals \left(\delta_{\tau^f}^E\right)^{-1}\!\left( \delta_\tau^F\!\left( y_\eta^F \right) \cdot \ldots \cdot \left(\delta_\tau^F\right)^f\!\left( y_\eta^F \right) \right)\]
    is a suitable choice for $y_\eta^E$. With these choices, \cref{power-of-gcp} shows that $\gamma^{E,\prime\prime}_\eta=(\gcpf)^f$.
\end{proof}}

\subsection{Conjugating indecomposable idempotents} \label{sec:conjugating-f-idempotents}
{The primitive central idempotent $\epsilon(\eta)$ is the sum of indecomposable idempotents $f_\eta^{(j)}$ introduced in \cref{def:f-idempotent}.}
Later it will be necessary to keep track of what happens to the idempotents $f_\eta^{(j)}$ under conjugation. To ease notation, we restrict our attention to delineating the behaviour of $f_\eta^{(1)}$ with respect to conjugation: the general case of $f_\eta^{(j)}$ is completely analogous.

We shall need the following observation from linear algebra. Let $ D$ be a skew field, and let $n\ge1$. Let $\Pi: M_n( D)\to  D$ be the map sending an $n\times n$ matrix $(x_{i,j})$ to its $(1,1)$-entry $x_{1,1}$. The map $\Pi$ is additive and $ D$-linear: in other words, it is a $ D$-vector space homomorphism, where $ D$ acts on $M_n( D)$ by left multiplication. Note that $\Pi$ fails to be multiplicative in general.

{Let $E_1\colonequals \diag(1,0,\ldots,0)$.} Consider the identity
\[
{E_1}
\begin{psmallmatrix}
	x_{1,1} & \dots & x_{1,n} \\ \vdots &\ddots & \vdots\\ x_{n,1} & \dots & x_{n,n}
\end{psmallmatrix}
{E_1}
=
{x_{1,1} E_1}.
\]
On the subspace of $n\times n$ matrices of this form, $\Pi$ becomes a $ D$-vector space isomorphism with respect to left $ D$-multiplication:
\begin{equation} \label{eq:pi-vs-iso}
	\Pi: {E_1} M_n( D) {E_1} \xrightarrow{\sim}  D  .
\end{equation}
On this subspace, $\Pi$ is also multiplicative: indeed, the product of two such matrices is again a matrix of this shape with the $(1,1)$-entries multiplied. That is, $\Pi$ is a ring isomorphism on this subspace.

{For an invertible matrix $A\in\GL_n(D)$, we define an idempotent
$e_A\colonequals A {E_1} A^{-1}.$
The pieces of $M_n(D)$ cut out by left and right multiplication by such idempotents obey the following rules.}

\begin{lemma} \label{f-conjugate}
	Let $A, B\in \GL_n( D)$. Then there is a well-defined isomorphism of $\cent( D)$-vector spaces:
	{\begin{align*}
		\Pi_{A,B}:e_A M_n( D) e_B &\xrightarrow{\sim}  D \\
		e_A X e_B &\mapsto \Pi\left({E_1} A^{-1} X B {E_1} \right) \end{align*}}
\end{lemma}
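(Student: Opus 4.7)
The plan is to reformulate $\Pi_{A,B}$ transparently as the composite $Y \mapsto A^{-1} Y B \mapsto \Pi(A^{-1} Y B)$, and then to deduce well-definedness, linearity, and the isomorphism property from the structural fact \eqref{eq:pi-vs-iso} applied to the subspace $E_{1,1} M_n( D) E_{1,1}$, where I abbreviate $E_{1,1} \colonequals \diag(1, 0, \ldots, 0)$. The point is that the formula appearing in the statement of the lemma is really just conjugation by $A$ and $B^{-1}$, followed by the identification of the $(1,1)$-corner of $M_n( D)$ with $ D$ from \eqref{eq:pi-vs-iso}.

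First I would write $e_A = A E_{1,1} A^{-1}$ and $e_B = B E_{1,1} B^{-1}$. A short direct computation using these expressions shows that for $Y = e_A X e_B$ one has $A^{-1} Y B = E_{1,1} (A^{-1} X B) E_{1,1}$, which in particular lies in $E_{1,1} M_n( D) E_{1,1}$. This lets me rewrite $\Pi_{A,B}$ as $Y \mapsto \Pi(A^{-1} Y B)$, making it manifest that the value depends only on $Y$ and not on the choice of representative $X$, so the map is well-defined. The $\cent( D)$-linearity is then immediate, since $Y \mapsto A^{-1} Y B$ commutes with central scalars and $\Pi$ is $\cent( D)$-linear by \eqref{eq:pi-vs-iso}.

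For the isomorphism claim, I would exhibit the inverse explicitly. The assignment $Z \mapsto A Z B^{-1}$ defines a $\cent( D)$-linear map $E_{1,1} M_n( D) E_{1,1} \to e_A M_n( D) e_B$: for $Z = E_{1,1} Z E_{1,1}$ one verifies $e_A (A Z B^{-1}) e_B = A Z B^{-1}$ by inserting $A^{-1} A$ and $B B^{-1}$ at the appropriate places. The two maps $Y \mapsto A^{-1} Y B$ and $Z \mapsto A Z B^{-1}$ are mutually inverse on the respective subspaces by a similar insertion. Composing with the isomorphism $\Pi$ of \eqref{eq:pi-vs-iso} produces the desired $\Pi_{A,B}$.

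I do not foresee any serious obstacle; the argument is essentially an unpacking of definitions, with the calculation $A^{-1} Y B = E_{1,1} (A^{-1} X B) E_{1,1}$ serving as the whole content. The only point requiring a small amount of care is keeping track of the order of $A$, $A^{-1}$, $B$, $B^{-1}$, since $ D$ is noncommutative and these matrices are not assumed to commute with anything in sight.
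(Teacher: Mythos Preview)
Your proposal is correct and follows essentially the same approach as the paper. The paper's proof is a one-liner stating that $\Pi_{A,B}$ is $\Pi$ precomposed with left multiplication by $A^{-1}$ and right multiplication by $B$; your argument simply unpacks this by verifying explicitly that $A^{-1}(e_A X e_B)B = E_{1,1}(A^{-1}XB)E_{1,1}$ lands in $E_{1,1}M_n(D)E_{1,1}$ and by exhibiting the inverse, which is more detail than the paper gives but the same idea.
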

\begin{proof}
	The assertion is clear from \eqref{eq:pi-vs-iso}: the map $\Pi_{A,B}$ is  $\Pi$ precomposed with {left multiplication by $A^{-1}$ and right multiplication by $B$.}
\end{proof}

\begin{lemma} \label{Pi-AB-multiplication-rule}
	Let $A,B,C\in \GL_n( D)$. Then there is the following multiplication rule: for all $X,Y\in M_n( D)$, one has {$\Pi_{A,B}\left(e_A X e_B\right) \cdot \Pi_{B,C}\left(e_B Y e_C\right) = \Pi_{A,C}\left(e_A X e_B \cdot e_B Y e_C\right)$}.
\end{lemma}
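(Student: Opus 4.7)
The plan is to reduce the claim to the observation (made just above \cref{f-conjugate}) that $\Pi$ is multiplicative on the subspace of matrices of the form $E(\,\cdot\,)E$, where $E\colonequals\begin{psmallmatrix}1\\&0\\&&\ddots\\&&&0\end{psmallmatrix}$. The only content is to unravel the definitions and check that the relevant products land in this subspace.

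First I would rewrite $e_A=AEA^{-1}$, $e_B=BEB^{-1}$, $e_C=CEC^{-1}$, so that by definition
\begin{align*}
\Pi_{A,B}(e_A X e_B)&=\Pi(E A^{-1} X B E), &
\Pi_{B,C}(e_B Y e_C)&=\Pi(E B^{-1} Y C E).
\end{align*}
Both of these are of the form $\Pi(EME)$. Similarly, using $e_B\cdot e_B=e_B$ and substituting $e_B=BEB^{-1}$ once on each side, the right hand side unfolds to
\[\Pi_{A,C}(e_A X e_B\cdot e_B Y e_C)=\Pi\bigl(E A^{-1} X B E \cdot E B^{-1} Y C E\bigr).\]
Thus, setting $M\colonequals E A^{-1} X B E$ and $N\colonequals E B^{-1} Y C E$, the desired identity reads $\Pi(M)\Pi(N)=\Pi(MN)$.

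The main (and only) step is then to invoke multiplicativity of $\Pi$ on the subspace $E M_n(D) E$: by \eqref{eq:pi-vs-iso}, this restriction is a ring isomorphism onto $D$. Since $M$ and $N$ visibly lie in $E M_n(D) E$, their product $MN$ also lies in this subspace (the two middle $E$'s collapse to one), and multiplicativity gives $\Pi(MN)=\Pi(M)\Pi(N)$, which is the claim.

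I do not anticipate a genuine obstacle here; the statement is essentially a bookkeeping exercise matching the conjugations $A^{-1}(\,\cdot\,)B$, $B^{-1}(\,\cdot\,)C$ with multiplication of rank-one corner matrices, and then quoting \eqref{eq:pi-vs-iso}.
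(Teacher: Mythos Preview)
Your proposal is correct and takes essentially the same approach as the paper, which simply states that the identity is ``a direct consequence of the definitions''; you have merely unpacked those definitions explicitly and invoked the ring-isomorphism property of $\Pi$ on $E M_n(D) E$ recorded at \eqref{eq:pi-vs-iso}.
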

\begin{proof}
	The statement is a direct consequence of the definitions.
\end{proof}

Let us now specialise to the case $ D\colonequals\tD_\eta$. Passing through the Wedderburn isomorphism \eqref{eq:Wedderburn-eta}, the ring isomorphism $\Pi$ in \eqref{eq:pi-vs-iso} gives rise to a ring isomorphism
\begin{equation} \label{eq:pi-iso}
	\pi: f_\eta^{(1)} \Q^ F(\Gamma_0)[H]\epsilon(\eta)f_\eta^{(1)}=f_\eta^{(1)}\Q^ F(\Gamma_0)[H]f_\eta^{(1)}\xrightarrow{\sim} \tD_\eta  .
\end{equation}

Let $a,b\in \Q^ F(\Gamma_0)[H]\epsilon(\eta)^\times$ be two units corresponding to $A,B\in\GL_n(\tD_\eta)$. By pre- resp. postcomposing $\Pi_{A,B}$ with the Wedderburn isomorphism \eqref{eq:Wedderburn-eta} resp. its inverse, we define maps $\pi_{a,b}$. These are $\cent(\tD_\eta)$-vector space isomorphisms (\cref{f-conjugate}) satisfying a multiplication rule analogous to \cref{Pi-AB-multiplication-rule}.

For the conjugation action studied in \cref{sec:deltas}, we obtain the following:
\begin{lemma}\label{fQpHf}
	Let $\vf\mid i,j$. Then there are $\cent(\tD_\eta)$-vector space isomorphisms
	\begin{align*}
		\Pi_{i,j}: \pc^{i/\vf} \left({E_1}\right) \cdot M_{\nFeta}(\tD_\eta) \cdot \pc^{j/\vf}\left({E_1}\right) &\xrightarrow{\sim} \tD_\eta  , \\
		\pi_{i,j}: {}^{\gamma^{i}}f_\eta^{(1)}\cdot \Q^ F(\Gamma_0)[H]\epsilon(\eta) \cdot {}^{\gamma^j} f_\eta^{(1)} &\xrightarrow{\sim} \tD_\eta  .
	\end{align*}
	Moreover, $\Pi=\Pi_{0,0}$ and $\pi=\pi_{0,0}$. \qed
\end{lemma}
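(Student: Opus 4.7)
The plan is to identify $\Pi_{i,j}$ and $\pi_{i,j}$ with instances of the constructions $\Pi_{A,B}$ from \cref{f-conjugate} and its group-ring counterpart $\pi_{a,b}$ discussed immediately thereafter, taking $A\colonequals A_{i/\vf}$, $B\colonequals A_{j/\vf}$ in the matrix ring and $a\colonequals\af_{i/\vf}$, $b\colonequals\af_{j/\vf}$ in the group ring. All the machinery is already assembled in \cref{sec:deltas,sec:conjugating-f-idempotents}; only the identification of the stated domains remains.

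The first step is to observe that $\begin{psmallmatrix}1\\&0\\&&\ddots\\&&&0\end{psmallmatrix}$ has entries in $\ZZ\subseteq\cent(\tD_\eta)$ and is therefore fixed entry-wise by $\tauf$. Substituting this matrix for $X$ in \eqref{eq:Yeta-higher} collapses the right-hand side to
\[\pc^{i/\vf}\begin{psmallmatrix}1\\&0\\&&\ddots\\&&&0\end{psmallmatrix} = A_{i/\vf}\begin{psmallmatrix}1\\&0\\&&\ddots\\&&&0\end{psmallmatrix} A_{i/\vf}^{-1},\]
which is the idempotent $e_{A_{i/\vf}}$ in the notation preceding \cref{f-conjugate}. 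The domain of $\Pi_{i,j}$ as stated in the lemma is therefore $e_{A_{i/\vf}}M_{\nFeta}(\tD_\eta)e_{A_{j/\vf}}$, so one may simply set $\Pi_{i,j}\colonequals\Pi_{A_{i/\vf},A_{j/\vf}}$ and invoke \cref{f-conjugate} for the isomorphism assertion.

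The argument for $\pi_{i,j}$ is entirely parallel, using \eqref{eq:yeta-higher} in place of \eqref{eq:Yeta-higher}: the idempotent $f_\eta^{(1)}$ corresponds under the Wedderburn isomorphism \eqref{eq:Wedderburn-eta} to the same diagonal matrix and is therefore fixed by $\pt^{i/\vf}$, so \eqref{eq:yeta-higher} yields ${}^{\gamma^i}f_\eta^{(1)}=\af_{i/\vf}f_\eta^{(1)}\af_{i/\vf}^{-1}$. Setting $\pi_{i,j}\colonequals\pi_{\af_{i/\vf},\af_{j/\vf}}$ then produces the required isomorphism, and the ``moreover'' clause follows from the empty-product convention $A_{0/\vf}=\af_{0/\vf}=1$, which gives $\Pi_{0,0}=\Pi$ and $\pi_{0,0}=\pi$. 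I do not anticipate any serious obstacle: the entire argument amounts to bookkeeping that strings together material already in place in \cref{sec:deltas,sec:conjugating-f-idempotents}, the only point worth flagging being the empty-product convention at $i=0$.
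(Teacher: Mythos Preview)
Your proposal is correct and matches the paper's approach exactly: the paper states the lemma with a bare \qed, treating it as an immediate specialisation of \cref{f-conjugate} and the maps $\pi_{a,b}$ defined just before it, with $A=A_{i/\vf}$, $B=A_{j/\vf}$ (and $a=\af_{i/\vf}$, $b=\af_{j/\vf}$). You have simply made explicit the identifications $\pc^{i/\vf}\begin{psmallmatrix}1\\&0\\&&\ddots\end{psmallmatrix}=e_{A_{i/\vf}}$ and ${}^{\gamma^i}f_\eta^{(1)}=\af_{i/\vf}f_\eta^{(1)}\af_{i/\vf}^{-1}$ that the paper leaves to the reader.
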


\begin{remark}
	The conjugate ${}^{\gamma^\ell} f_\eta^{(j)} = \gamma^\ell f_\eta^{(j)} \gamma^{-\ell}$ is also an indecomposable idempotent, but it need not be of the form $f_\eta^{(k)}$ for some $k$. As illustrated by the following example, this is not even the case for the corresponding matrices:
	\[
	\begin{psmallmatrix}
		2 & 3 \\ 1 & 2
	\end{psmallmatrix}
	\begin{psmallmatrix}
		1 & 0 \\ 0 & 0
	\end{psmallmatrix}
	\begin{psmallmatrix}
		2 & 3 \\ 1 & 2
	\end{psmallmatrix}^{-1} = 
	\begin{psmallmatrix}
		2 & 3 \\ 1 & 2
	\end{psmallmatrix}
	\begin{psmallmatrix}
		1 & 0 \\ 0 & 0
	\end{psmallmatrix}
	\begin{psmallmatrix}
		2 & -3 \\ -1 & 2
	\end{psmallmatrix} =
	\begin{psmallmatrix}
		4 & -6 \\ 2 & -3
	\end{psmallmatrix}  . \qedhere
	\]
\end{remark}

\subsection{Ring structure of \texorpdfstring{$f_\eta^{(1)} \Q^ F(\G)f_\eta^{(1)}$}{f1η QF(G) f1η}} \label{sec:ring-structure}
The module $f_\eta^{(1)} \Q^ F(\G)f_\eta^{(1)}$ is a ring: indeed, if $x,y\in\Q^ F(\G)$, then addition and multiplication rules in $f_\eta^{(1)} \Q^ F(\G)f_\eta^{(1)}$ are as follows:
\begin{align*}
	f_\eta^{(1)} x f_\eta^{(1)}+f_\eta^{(1)}yf_\eta^{(1)}&=f_\eta^{(1)}(x+y)f_\eta^{(1)}  , \\
	f_\eta^{(1)} xf_\eta^{(1)}\cdot f_\eta^{(1)}yf_\eta^{(1)}&=f_\eta^{(1)}\big(xf_\eta^{(1)}y\big)f_\eta^{(1)}  .
\end{align*}
The unity element is $f_\eta^{(1)}$. We will now describe the multiplication rule in more detail.

Under the decomposition \eqref{eq:Q-decomposition}, an element $x\in\Q^ F(\G)$ can be written as
\begin{equation} \label{eq:x-Q-decomp}
	x=\sum_{\ell=0}^{p^{n_0}-1} x_\ell \gamma^\ell, \quad x_\ell\in\Q^ F(\Gamma_0)[H]  ,
\end{equation}
and similarly for $y$.
When considering $f_{\eta}^{(1)} x f_{\eta}^{(1)}$, we may, without loss of generality, restrict the summation to indices $\ell$ divisible by $\vf$: this is a consequence of orthogonality of the $\epsilon$-idempotents of $F[H]$, similarly to \cite[1224]{Lau} and \cite[610]{NickelConductor}:
\begin{noproof}{lemma} \label{divisibility-by-v_chi}
	We have $f_{\eta}^{(1)} x_\ell \gamma^\ell f_{\eta}^{(1)} = 0$ whenever $\vf\nmid \ell$. Therefore
	\[f_{\eta}^{(1)} x f_{\eta}^{(1)} = \sum_{\ell=0}^{p^{n_0}-1} f_{\eta}^{(1)} x_\ell \gamma^\ell f_{\eta}^{(1)} = \sum_{\substack{\ell=0 \\ \vf\mid \ell}}^{p^{n_0}-1} f_{\eta}^{(1)} x_\ell \gamma^\ell f_{\eta}^{(1)}  . \qedhere\]
\end{noproof}

\Cref{divisibility-by-v_chi} allows us {to} compute products in $f_\eta^{(1)}\Q^ F(\G)f_\eta^{(1)}$: since all powers of $\gamma$ occurring in the sum are divisible by $\vf$, conjugation by them acts as some power of $\pt$ as in \eqref{eq:yeta-higher}.
\begingroup\allowdisplaybreaks
\begin{align}
	&\phantom{==}f_\eta^{(1)} xf_\eta^{(1)}\cdot f_\eta^{(1)}yf_\eta^{(1)} \label{eq:mult-rule-0} \\
	&= \sum_{\substack{\ell,\ell'=0 \\ \vf\mid \ell,\ell'}}^{p^{n_0}-1} f_{\eta}^{(1)} \cdot x_\ell \cdot {}^{\gamma^\ell}f_{\eta}^{(1)} \cdot {}^{\gamma^\ell}f_{\eta}^{(1)}\cdot {}^{\gamma^\ell}y_{\ell'}\cdot {}^{\gamma^{\ell+\ell'}}f_{\eta}^{(1)} \cdot \gamma^{\ell+\ell'} \notag
	\intertext{Expand the conjugation action by using \eqref{eq:yeta-higher} {and \eqref{eq:aiaj}}. Since $\tauf$ acts trivially on $\diag(1,0\ldots,0)$, the corresponding automorphism $\pt$ also acts trivially on $f_\eta^{(1)}$ by definition.}
	&= \sum_{\substack{\ell,\ell'=0 \\ \vf\mid \ell,\ell'}}^{p^{n_0}-1} 
	f_{\eta}^{(1)} x_\ell  \af_{\ell/\vf} f_{\eta}^{(1)} \cdot 
	f_{\eta}^{(1)} \pt^{\ell/\vf}(y_{\ell'}) \pt^{\ell/\vf}(a_{\ell'/\vf}) f_{\eta}^{(1)} 
	\cdot \left(\af_{(\ell+\ell')/\vf}\right)^{-1} \cdot \gamma^{\ell+\ell'} \label{before-fQGf-multiplication-rule}
\end{align}\endgroup

In \eqref{before-fQGf-multiplication-rule}, we have a description of the general multiplication rule of the ring $f_\eta^{(1)} \Q^ F(\G)f_\eta^{(1)}$. Notice that this is controlled by the automorphism $\pt$. It follows from \eqref{eq:yeta} that the element $\gcp$, which we have already singled out in \cref{def:gpc}, plays a special r\^ole:

\begin{noproof}{lemma} \label{gcp-conjugation}
	Conjugation by $\gcp$ acts as $\pt$ on $f_{\eta}^{(1)} \Q^ F(\Gamma_0)[H] f_\eta^{(1)}$, that is, for all $y_0\in \Q^ F(\Gamma_0)[H]$,
    \begin{align*}
		\gcp\cdot f_\eta^{(1)} y_0 f_\eta^{(1)}\cdot (\gcp)^{-1} 
		&= f_\eta^{(1)} \pt(y_0) f_\eta^{(1)}  . \qedhere
	\end{align*}
\end{noproof}

\section{The totally ramified case} \label{sec:TORC}

\subsection{Indecomposability of group ring idempotents}\label{sec:fQGf-skew-field}
Let $\eta$ be an irreducible character of $H$.
The element $f_{\eta}^{(j)}$ is then an indecomposable idempotent in the group ring $ F[H]$, and thus 
$f_{\eta}^{(j)}  F[H] f_{\eta}^{(j)}\simeq D_{\eta}$, as witnessed by \eqref{Deta-feta}. 
This section is devoted to the proof of the following:
\begin{theorem} \label{fQGf-skew-field} \label{thm:skew-field-totally-ramified}
	Let $\eta\in\Irr(H)$ be an irreducible constituent of $\res^\G_H\chi$, and let $1\le j\le \nFeta$. Assume that the extension $ F(\eta)/ F_\chi$ is totally ramified. Then the algebra $f_{\eta}^{(j)} \Q^ F(\G) f_{\eta}^{(j)}$ is a skew field.
\end{theorem}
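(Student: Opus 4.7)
The strategy is to show that $f_\eta^{(j)}\Q^F(\G)f_\eta^{(j)}$ has no zero divisors. Since $\eta\mid\res^\G_H\chi$ implies $f_\eta^{(j)}=\epsilon_\chi f_\eta^{(j)}\in\epsilon_\chi\Q^F(\G)\simeq M_{\nFchi}(\DFchi)$, and any corner of $M_n(D)$ by an idempotent is isomorphic to $M_r(D)$ for some $r\ge 1$, being a skew field is equivalent to being a domain. Without loss of generality, I take $j=1$.

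Combining \cref{divisibility-by-v_chi}, \cref{gcp-conjugation} and \cref{power-of-gcp} yields an explicit model. Since $\pt$ acts on $M_{\nFeta}(\tD_\eta)$ entry-wise via $\tauf$, it fixes $f_\eta^{(1)}$ (which corresponds to $\diag(1,0,\ldots,0)$), so $\gcp$ commutes with $f_\eta^{(1)}$ and the element $u\colonequals f_\eta^{(1)}\gcp=\gcp f_\eta^{(1)}$ is well-defined, satisfying $u\cdot[a]=[\tauf(a)]\cdot u$ for all $[a]\in f_\eta^{(1)}\Q^F(\Gamma_0)[H]f_\eta^{(1)}\simeq\tD_\eta$. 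Under the totally ramified hypothesis, $\tauf$ has order exactly $e=(F(\eta):F_\chi)$. Setting $N\colonequals p^{n_0}/\vf$, a short calculation using \eqref{eq:a-pn0-central} gives $u^N=f_\eta^{(1)}\af_N^{-1}\gamma_0$, which identifies via \eqref{eq:pi-iso} with a unit $\alpha$ in $\cent(\tD_\eta)^{\langle\tauf\rangle}=\Q^{F_\chi}(\Gamma_0)^\times$. Hence $u$ is a unit, and $f_\eta^{(1)}\Q^F(\G)f_\eta^{(1)}=\bigoplus_{i=0}^{N-1}\tD_\eta u^i$ with the two defining relations $ua=\tauf(a)u$ (for $a\in\tD_\eta$) and $u^N=\alpha$.

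Having this presentation, I want to identify the corner ring with the skew field $\Quot(\OO_{\DFeta}[[X;\tauf,\tauf-\id]])$ of \cref{full-centre-of-skew-power-series-ring}: the substitution $X\colonequals u-f_\eta^{(1)}$ reproduces the defining multiplication rule $Xa=\tauf(a)X+(\tauf-\id)(a)$ of \cref{sec:props-skew}. Direct computation using the multiplication rules shows that $\cent(f_\eta^{(1)}\Q^F(\G)f_\eta^{(1)})=\Q^{F_\chi}(\Gamma_0)[u^e]$, and that the minimal polynomial of $u^e$ over $\Q^{F_\chi}(\Gamma_0)$ is $Y^{N/e}-\alpha$. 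This polynomial is irreducible: $\alpha$ equals, up to a unit from $F_\chi^\times$, the element $\gamma_0=1+T\in F_\chi((T))$, which has no $p$-th root there (the formal binomial expansion of $(1+T)^{1/p}$ has unbounded $p$-adic denominators). Consequently the corner ring has dimension $(es_\eta)^2$ over its centre, matching the dimension of $\Quot(\OO_{\DFeta}[[X;\tauf,\tauf-\id]])$ by \cref{full-centre-of-skew-power-series-ring}. The substitution above yields an explicit embedding of the latter into the former, and dimension comparison forces it to be an isomorphism, establishing the skew field property.

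The principal obstacle is the centre computation (verifying no central elements appear outside $\Q^{F_\chi}(\Gamma_0)[u^e]$) together with the precise matching of parameters between the two presentations: the central element $\gamma_0$ on the group ring side must correspond, under the identification, to a specific element of $\OO_{F_\chi}[[(1+X)^e-1]]$ computable from $\af_N$ and $(1+X)^N$, and verifying this correspondence requires combining the Skolem--Noether element $\af_N$ with the explicit form of $\alpha$.
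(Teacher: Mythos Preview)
Your presentation $f_\eta^{(1)}\Q^F(\G)f_\eta^{(1)}=\bigoplus_{i=0}^{N-1}\tD_\eta u^i$ with $ua=\tauf(a)u$ and $u^N=\alpha\in\Q^{F_\chi}(\Gamma_0)^\times$ is correct, and the centre computation goes through as you sketch. But the decisive step --- showing this crossed product is a skew field --- does not follow from your argument.

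The main problem is the ``embedding'' of $\Quot(\OO_{D_\eta}[[X;\tauf,\tauf-\id]])$ into the corner ring via $X\mapsto u-1$. This map is not well-defined: elements of the skew power series ring are genuine infinite sums $\sum_n c_n X^n$, and substituting $X=u-1$ produces $\sum_n c_n(u-1)^n$, which has no reason to converge in the corner ring. The corner ring is a finite-dimensional algebra over the field $\Q^{F_\chi}(\Gamma_0)$ and carries no topology in which $u-1$ is topologically nilpotent. (The paper's \cref{Dchi-Quot} builds the map in the \emph{opposite} direction, $\gcp\mapsto 1+X$, and even there the well-definedness is delicate; crucially, that theorem is proved only \emph{after} \cref{fQGf-skew-field} is established, so invoking it here would be circular.) Without the embedding, your dimension comparison has nothing to compare, and you are left with a central simple algebra of dimension $(es_\eta)^2$ over its centre with no mechanism to exclude $M_r(D)$ for $r>1$.

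A secondary issue: your irreducibility argument for $Y^{N/e}-\alpha$ is incorrect. The binomial series $(1+T)^{1/p}$ has coefficients in $\QQ_p\subset F_\chi$, so it \emph{does} lie in $F_\chi[[T]]$; unbounded $p$-adic denominators are no obstruction once the coefficient field is $F_\chi$ rather than $\OO_{F_\chi}$. In fact the irreducibility you want is automatic from the $\tD_\eta$-linear independence of $1,u,\ldots,u^{N-1}$, but this still only gives that the centre is a field, not that the whole algebra is a skew field.

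The paper's proof takes a completely different route. It reduces the skew-field property to nonsingularity of an explicit $\tauf$-twisted circulant matrix $M\in M_{N}(\tD_\eta)$, interprets $M$ as right multiplication in an auxiliary cyclic algebra $A=(\Q^{F(\eta)(\omega)}(\Gamma^{w_\chi})/\Q^{F_\chi}(\Gamma^{w_\chi}),\sigma\tauf,a)$, and then applies Wedderburn's criterion: $A$ is a division ring provided the element $a=N_{F(\eta)/F_\chi}(\pi_\eta)(1+T)^{s_\eta}$ has order $s_\eta\cdot w_\chi/v_\chi$ in the norm factor group. Divisibility by $s_\eta$ comes from an augmentation argument, and divisibility by $w_\chi/v_\chi$ from a Tate-cohomology computation (\cref{totally-ramified-order}) that genuinely requires $F(\eta)/F_\chi$ to be totally ramified. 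Your proposal never uses the total ramification hypothesis in a comparably essential way, which is a signal that the hard arithmetic input is missing.
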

In other words, \cref{fQGf-skew-field} states that the idempotent $f_{\eta}^{(j)}$ remains indecomposable in $\Q^ F(\G)$ {\cite[Proposition~3.18]{CR}}.
For the rest of this section, we fix $\chi$ and $\eta$ as in the statement, and assume that $ F(\eta)/ F_\chi$ is totally ramified. Without loss of generality, we may restrict our attention to the case $j=1$.

\Cref{fQGf-skew-field} states that every nonzero element of $f_\eta^{(1)}\Q^ F(\G)f_\eta^{(1)}$ admits a left and right inverse. 
Since the two are equivalent, we shall work with left inverses, and move powers of $\gamma$ to the right: this will make the formul\ae{} arising from the multiplication rule slightly more palatable.
Let $x\in\Q^ F(\G)$ be such that $f_\eta^{(1)}xf_\eta^{(1)}\ne0$. We seek a $y\in\Q^ F(\G)$ such that 
\begin{equation} \label{eq:big-equation}
	f_\eta^{(1)} y f_\eta^{(1)} \cdot f_\eta^{(1)} x f_\eta^{(1)}= f_\eta^{(1)} \Big( 1\cdot\gamma^0+0\cdot\gamma^1+\ldots+0\cdot\gamma^{p^{n_0}-1} \Big) f_\eta^{(1)} = f_\eta^{(1)}  .
\end{equation}
Under the decomposition \eqref{eq:Q-decomposition}, the elements $x$ resp. $y$ can be written as
\begin{equation*}
	x=\sum_{\ell'=0}^{p^{n_0}-1} x_{\ell'} \gamma^{\ell'} \quad\text{resp.}\quad y=\sum_{\ell=0}^{p^{n_0}-1} y_{\ell} \gamma^{\ell}, \quad x_{\ell'},y_\ell\in\Q^ F(\Gamma_0)[H]  .
\end{equation*}
Using \cref{divisibility-by-v_chi} and the multiplication rule \eqref{before-fQGf-multiplication-rule}, the equation \eqref{eq:big-equation} can be rewritten as
\begin{equation*}
	\sum_{\substack{\ell,\ell'=0 \\ \vf\mid \ell,\ell'}}^{p^{n_0}-1}  
	\left(f_\eta^{(1)} y_\ell  \af_{\ell/\vf} f_\eta^{(1)}\right)\cdot 
	\left(f_\eta^{(1)} \pt^{\ell/\vf}(x_{\ell'} \af_{\ell'/\vf}) f_\eta^{(1)}\right) \cdot (\af_{(\ell+\ell')/\vf})^{-1} \cdot \gamma^{\ell+\ell'} = f_\eta^{(1)} \gamma^0  f_\eta^{(1)}  .
\end{equation*}
This is equivalent to the collection of the following equations for $0\le k< p^{n_0}$ and $\vf\mid k$:
\[\sum_{\substack{\ell,\ell'=0 \\ \ell+\ell'\equiv k \pod{p^{n_0}} \\ \vf\mid \ell,\ell'}}^{p^{n_0}-1}  
\left(f_\eta^{(1)} y_\ell  \af_{\ell/\vf} f_\eta^{(1)}\right)\cdot 
\left(f_\eta^{(1)} \pt^{\ell/\vf}(x_{\ell'} \af_{\ell'/\vf}) f_\eta^{(1)}\right) \cdot (\af_{(\ell+\ell')/\vf})^{-1} \cdot \gamma_0^t \gamma^k = f_\eta^{(1)}\delta_{0,k} \gamma^k f_\eta^{(1)}  .\]
Here $\delta_{0,k}$ is the Kronecker delta, and $t=t(k,\ell)$ is defined by the equation $\ell+\ell'=k+p^{n_0}t$, that is, $t=1$ if $k<\ell$ (or equivalently $k<\ell'$) and zero otherwise; {we will often omit the arguments $k$ and $\ell$}. On the level of coefficients, we have an equation for each $k$:
\[\sum_{\substack{\ell,\ell'=0 \\ \ell+\ell'\equiv k \pod{p^{n_0}} \\ \vf\mid \ell,\ell'}}^{p^{n_0}-1}  
\left(f_\eta^{(1)} y_\ell  \af_{\ell/\vf} f_\eta^{(1)}\right)\cdot 
\left(f_\eta^{(1)} \pt^{\ell/\vf}(x_{\ell'} \af_{\ell'/\vf}) f_\eta^{(1)}\right) \cdot \left(\af_{(k+p^{n_0}t)/\vf}\right)^{-1} \cdot \gamma_0^t = \delta_{0,k} f_\eta^{(1)}  . \]
Consider the factor $a_{(k+p^{n_0}t)/\vf}^{-1}$: using \eqref{eq:aiaj}, it can be rewritten as {$\pt^{k/\vf}(a_{p^{n_0}t/\vf})^{-1}a_{k/\vf}^{-1}$}. The factor $a_{k/\vf}^{-1}$ depends only on $k$, and thus can be removed by multiplying the $k$th equation by $a_{k/\vf}$. For $k=0$, this is just $1$, and for $k\ne0$, the right hand side is zero: in either case, the right hand side does not change.
The factor {$\pt^{k/\vf}(a_{p^{n_0}t/\vf})^{-1}$} is central in $ F(\eta)[H]\epsilon(\eta)$ by \eqref{eq:a-pn0-central}, so it may be moved inside the second factor in brackets.

In conclusion, applying the isomorphism $\pi$ to the multiplication rule \eqref{before-fQGf-multiplication-rule}, we find that \eqref{eq:big-equation} is equivalent to the following system of linear equations over $\tD_\eta$, with indeterminates $\pi(f_\eta^{(1)} y_\ell  \af_{\ell/\vf} f_\eta^{(1)})$:
\begin{equation} \label{eq:sys-lin}
	\sum_{\substack{\ell,\ell'=0 \\ \ell+\ell'\equiv k \pod{p^{n_0}} \\ \vf\mid \ell,\ell'}}^{p^{n_0}-1}  
	\underbrace{\pi\left(f_\eta^{(1)} y_\ell  \af_{\ell/\vf} f_\eta^{(1)}\right)}_{\in \tD_\eta}
	\cdot \underbrace{\pi\left(f_\eta^{(1)} \pt^{\ell/\vf}(x_{\ell'}a_{\ell'/\vf}) \pt^{k/\vf}(a_{p^{n_0}t/\vf})^{-1} f_\eta^{(1)}\right) }_{d_{k,\ell} \in \tD_\eta} \gamma_0^t
	= \delta_{0,k}  .
\end{equation}
Here $k$ runs over the numbers $0\le k<p^{n_0}$ such that $\vf\mid k$.
The two factors in the summation are both in $\tD_\eta$ due to \cref{fQpHf}, with the product operation being the one of $\tD_\eta$ (see \cref{Pi-AB-multiplication-rule}). Note that the second factor inside the summation, denoted by $d_{k,\ell}$, depends only on $k$ and $\ell$, since $\ell'$ and $t$ are determined by these two.

{We conclude that} \eqref{eq:big-equation} admits a solution if and only if the system of linear equations \eqref{eq:sys-lin} over $\tD_\eta$, with $0\le k<p^{n_0}$ and $\vf\mid k$, has a solution.
{Indeed, this is true since $\pi:f_\eta^{(1)}\Q^F(\Gamma_0)[H]f_\eta^{(1)}\xrightarrow{\sim}\tD_\eta$ is an isomorphism of rings, and each $a_{\ell/v_\chi}$ is a unit in $F[H]\epsilon(\eta)\subseteq \Q^F(\Gamma_0)[H]\epsilon(\eta)$.}

Let $n\colonequals p^{n_0}/\vf$, and let $M$ be the $n\times n$ matrix whose $(i,j)$th entry is
\[
d_{(i-1)\vf,(j-1)\vf}\gamma_0^{ t} =
\begin{cases}
	d_{(i-1)\vf,(j-1)\vf}\gamma_0 & \text{if }i<j  , \\
	d_{(i-1)\vf,(j-1)\vf} & \text{if }i\ge j  . \\
\end{cases}
\]
{Here $t=t\left((i-1)\vf,(j-1)\vf\right)$.}
The existence of a {unique} solution to the system of linear equations \eqref{eq:sys-lin} is thus equivalent to the matrix $M$ being nonsingular.

Given a pair $(\ell,k)$ in the summation in \eqref{eq:sys-lin}, the number $\ell'$ is uniquely determined. Moreover, the pairs $(\ell,k)$ and $(\ell+\vf,k+\vf)$ yield the same $\ell'$, since $\ell+\ell'\equiv k \pmod{p^{n_0}}$.
Therefore
\begin{align}
	d_{k,\ell} &= \pi\left(f_\eta^{(1)} \pt^{\ell/\vf}(x_{\ell'}a_{\ell'/\vf}) \pt^{k/\vf}(a_{p^{n_0}t/\vf})^{-1} f_\eta^{(1)}\right)  , \label{eq:dkl} \\
	d_{k+\vf,\ell+\vf} &= \pi\left(f_\eta^{(1)} \pt^{(\ell+\vf)/\vf}(x_{\ell'}a_{\ell'/\vf}) \pt^{(k+\vf)/\vf}(a_{p^{n_0}t/\vf})^{-1} f_\eta^{(1)}\right)  . \label{eq:dkl+}
\end{align}
Recall that $\pt$ comes from the entry-wise $\tauf$-action via the Wedderburn isomorphism, and that $\pi$ comes from the map $\Pi$ via the Wedderburn isomorphism. Thus \eqref{eq:dkl} and \eqref{eq:dkl+} allow us to deduce that 
\begin{equation} \label{eq:taudkl}
	\tauf(d_{k,\ell})=d_{k+\vf,\ell+\vf}  ,
\end{equation}
where the indices are understood modulo $p^{n_0}$.

Let us write $m_i\colonequals d_{0,i\vf}$ for $0\le i<n$. The condition $f_\eta^{(1)}xf_\eta^{(1)}\ne0$ implies that at least one of these $m_i$s is not zero.
Then using \eqref{eq:taudkl}, we find that $M\in M_n(\tD_\eta)$ is of the following shape; recall that $\tauf$ acts trivially on $\gamma_0$ by definition (see \cref{sec:deltas}):
\begin{equation} \label{M-shape}
	M=\begin{pmatrix}
		m_0 & m_1\gamma_0 & m_2\gamma_0 & \cdots & m_{n-2}\gamma_0 & m_{n-1}\gamma_0 \\
		\tauf(m_{n-1}) & \tauf(m_0) & \tauf(m_1)\gamma_0 & \ddots & \tauf(m_{n-3})\gamma_0 & \tauf(m_{n-2})\gamma_0 \\
		\tauf^2(m_{n-2}) & \tauf^2(m_{n-1}) & \tauf^2(m_0) & \ddots & \tauf^2(m_{n-4})\gamma_0 & \tauf^2(m_{n-3})\gamma_0 \\
		\vdots & \vdots & \vdots & \ddots & \ddots & \vdots \\
		\tauf^{n-2}(m_2) & \tauf^{n-2}(m_3) & \tauf^{n-2}(m_4) & \cdots & \tauf^{n-2}(m_0) & \tauf^{n-2}(m_1)\gamma_0 \\
		\tauf^{n-1}(m_1) & \tauf^{n-1}(m_2) & \tauf^{n-1}(m_3) & \cdots & \tauf^{n-1}(m_{n-1}) & \tauf^{n-1}(m_0) \\
	\end{pmatrix}  .
\end{equation}
We conclude that the statement of \cref{fQGf-skew-field} is the special case of the following assertion:
let $M\in M_n(\tD_\eta)$ be as in \eqref{M-shape}, with at least one of $m_0,\ldots,m_{n-1}$ not zero; then $M$ is nonsingular.
In fact, since the $a$s are units and $\pi$ from \eqref{eq:pi-iso} is an isomorphism, any such matrix $M$ arises from some $x$, so this assertion on the nonsingularity of $M$ is equivalent to \cref{fQGf-skew-field}.

Let us write $m_i'\colonequals \tauf^i(m_{n-i})$ where $0\le i<n$ and $n-i$ is understood modulo $n$ (that is, to be $0$ when $i=0$). Then the transpose of $M$ is
\begin{equation} \label{eq:M'-shape}
	M^\top=\begin{pmatrix}
		m'_0 & m'_1 & m'_2 & \cdots & m'_{n-2} & m'_{n-1} \\
		\tauf(m'_{n-1})\gamma_0 & \tauf(m'_0) & \tauf(m'_1) & \ddots & \tauf(m'_{n-3}) & \tauf(m'_{n-2}) \\
		\tauf^2(m'_{n-2})\gamma_0 & \tauf^2(m'_{n-1})\gamma_0 & \tauf^2(m'_0) & \ddots & \tauf^2(m'_{n-4}) & \tauf^2(m'_{n-3}) \\
		\vdots & \vdots & \vdots & \ddots & \ddots & \vdots \\
		\tauf^{n-2}(m'_2)\gamma_0 & \tauf^{n-2}(m'_3)\gamma_0 & \tauf^{n-2}(m'_4)\gamma_0 & \cdots & \tauf^{n-2}(m'_0) & \tauf^{n-2}(m'_1) \\
		\tauf^{n-1}(m'_1)\gamma_0 & \tauf^{n-1}(m'_2)\gamma_0 & \tauf^{n-1}(m'_3)\gamma_0 & \cdots & \tauf^{n-1}(m'_{n-1})\gamma_0 & \tauf^{n-1}(m'_0) \\
	\end{pmatrix}  .
\end{equation}
Of course, $M$ is nonsingular if and only if $M^\top$ is, and the condition in the assertion above translates to at least one $m_i'$ not being zero.

Consider the following algebra: let
\[A\colonequals \bigoplus_{i=0}^{n-1} \tD_\eta \left(\gamma_0^{1/n}\right)^i\]
as a left $\tD_\eta$-module. Make $A$ an algebra by mandating the multiplication rule $\gamma_0^{1/n} d=\tauf(d)\gamma_0^{1/n}$ for all $d\in\tD_\eta$. {This makes sense since the order of $\tau$ divides $n$, or equivalently, the index $w_\chi=(\G:\G_\eta)$ of the stabiliser $\G_\eta$ of $\eta$ divides $p^{n_0}$, because $\gamma_0$ stabilises $\eta$.} Now the algebra $A$ has a basis over $\tD_\eta$ given by powers of $\gamma_0^{1/n}$. In this basis, right multiplication by the nonzero element $\sum_{i=0}^{n-1} m_i' (\gamma_0^{1/n})^i$ is given by the matrix $M^\top$ in \eqref{eq:M'-shape}. It follows that the assertion on the nonsingularity of $M$ is equivalent to $A$ being a skew field.

\begin{remark}
	In the definition of $A$, we consider $\gamma_0^{1/n}$ as a formal $n$th root of $\gamma_0$. By fiat, this has the same conjugation action as $\gamma^{\vf}$ does on $\eta$. However, since $n=p^{n_0}/\vf$, the identity $\gamma_0^{1/n}=\gamma^{\vf}$ holds in $\Gamma$. Therefore it behooves us to identify $\gamma_0^{1/n}$ with $\gamma^{\vf}$ for the rest of this section, with the tacit understanding that this is purely formal, and that things do not take place in $\Gamma$ or $\G$.
\end{remark}

We will express $A$ as a cyclic algebra. To this end, let 
\[\hD_\eta\colonequals D_\eta\otimes_{ F(\eta)} \Q^{ F(\eta)}\left(\Gamma_0^{\frac{w_\chi}{\vf n}}\right) = D_\eta\otimes_{ F(\eta)} \Q^{ F(\eta)}\left(\Gamma^{w_\chi}\right)  .\]
The cyclic algebra description \eqref{eq:D-cyclic-algebra} of $D_\eta$ provides a cyclic algebra description for $\hD_\eta$:
\[\hD_\eta=\bigoplus_{\ell=0}^{\sFeta-1} \Q^{ F(\eta)(\omega)}\left(\Gamma^{w_\chi}\right)\pi_{D_\eta}^\ell  .\]
{Here $\omega$ is a root of unity {of} order $q_\eta^{s_\eta}-1$, where $q$ is the order of the residue field of $F(\eta)$. We have $\pi_{D_\eta}^{s_\eta}=\pi_\eta$, and conjugation by $\pi_{D_\eta}$ acts via the automorphism $\sigma$ defined by $\sigma(\omega)=\omega^{q_\eta^{r_\eta}}$, where $r_\eta/s_\eta$ is the Hasse invariant of $D_\eta$.}
Therefore we may write $A$ as
\begin{equation}\label{eq:A-double}
	A= \bigoplus_{i=0}^{n-1} \tD_\eta (\gamma_0^{1/n})^i =  \bigoplus_{j=0}^{\frac{w_\chi}{\vf}-1} \hD_\eta (\gamma_0^{1/n})^j=
	\bigoplus_{j=0}^{\frac{w_\chi}{\vf}-1} \bigoplus_{\ell=0}^{\sFeta-1} \Q^{ F(\eta)(\omega)}\left(\Gamma^{w_\chi}\right)\pi_{D_\eta}^\ell \left(\gamma^{\vf}\right)^j  .
\end{equation}
Conjugation by $\pi_{D_\eta}$ acts as $\sigma$ whereas conjugation by $\gamma^{\vf}$ acts as $\tauf$. {We wish to combine the two sums into one: this is achieved by the following general lemma.}

\begin{lemma} \label{combining-double-sum}
    Let $\mathcal K/\mathcal F$ be a finite cyclic Galois extension, and let $\mathcal L_a$ and $\mathcal L_b$ be two subextensions such that $m_a\colonequals(\mathcal K:\mathcal L_a)$ and $m_b\colonequals(\mathcal K:\mathcal L_b)$ are coprime, $\mathcal L_a\mathcal L_b=\mathcal K$ {and $\mathcal L_a\cap \mathcal L_b=\mathcal F$}. In particular, $\Gal(\mathcal K/\mathcal F)=\Gal(\mathcal K/\mathcal L_a)\times\Gal(\mathcal K/\mathcal L_b)$. Let $\alpha$ resp. $\beta$ be generators of $\Gal(\mathcal K/\mathcal L_a)$ resp. $\Gal(\mathcal K/\mathcal L_b)$.
    
    Let $a$ be a nonzero algebraic element over {$\mathcal L_a$} such that $a^{m_a}\in \mathcal L_a^\times$, {and suppose that the cyclic algebra 
    \[\mathcal K(a)\colonequals(\mathcal K/\mathcal L_a,\alpha,a^{m_a})={\bigoplus_{\ell=0}^{m_a-1} \mathcal K a^\ell}\] has index $m_a$.}
    
    Suppose that $\beta$ extends to an automorphism {$\tilde \beta \in \Aut_{\mathcal L_b} (\mathcal K(a))$ of degree $m_b$}, that is, of the same degree at $\beta$. Further suppose that {$a^{-1}\tilde\beta(a)\in\mathcal L_a^\times$}.
    Let $b$ be nonzero algebraic element over {$\mathcal L_b$} such that $b^{{ m_b}}\in\mathcal L_b^\times$ {and $b^m\in\mathcal F^\times$ where $m\colonequals m_a m_b=(\mathcal K:\mathcal F)$}.
    
    Consider the $\mathcal F$-algebra
    \begin{equation*}
        {\mathfrak A\colonequals \sum_{j=0}^{{m_b}-1} \mathcal K(a) \cdot b^j,}
    \end{equation*}
    where conjugation by $b$ acts as $\tilde\beta$ on $\mathcal K(a)$.
    Viewing $\mathcal K(a)$ as a subalgebra of $\mathfrak A$, suppose that the elements $1,b,\ldots,b^{m_b-1}$ are linearly independent over $\mathcal K(a)$.
    
    {Then the following hold.
    \begin{lemmalist}
        \item We have that $\mathfrak A$ is an associative $\mathcal F$-algebra, and
        \begin{equation*}
        \mathfrak A= \bigoplus_{j=0}^{{m_b}-1} \mathcal K(a) \cdot b^j=  \bigoplus_{j=0}^{{m_b}-1} \bigoplus_{\ell=0}^{{m_a}-1} \mathcal K \cdot a^\ell \cdot b^j.
        \end{equation*}
        \item We have $(ab)^{m}=N_{\mathcal L_a/\mathcal F}\left(a^{{m_a}}\right) b^{m}\in\mathcal F^\times$.
        \item The subring $\mathfrak B\colonequals \sum_{k=0}^{m-1} \mathcal K (ab)^k$ of $\mathfrak A$ is a cyclic algebra, namely
        \[\mathfrak B=(\mathcal K/\mathcal L, \alpha\beta, (ab)^m)=\bigoplus_{k=0}^{m-1} \mathcal K(ab)^k.\] 
        \item We have $\mathfrak A=\mathfrak B$.
    \end{lemmalist}}
\end{lemma}

The following diagram shows the algebras occurring in the statement and the containments between them.
\[\begin{tikzcd}[ampersand replacement=\&,every arrow/.style={draw,no head},column sep=3mm,cells={nodes={minimum width=3em}}]
	\&\& {\mathfrak A} \& b \\
	a \& {\mathcal K(a)} \&\& \\ %{\mathcal K(b)} \& b \\
	{} \&\& {\mathcal K} \\
	{a^{{m_a}}} \& {\mathcal L_a} \&\& {\mathcal L_b} \& {b^{{m_b}}} \\
	\&\& {\mathcal F} \& b^m
	\arrow[from=1-3, to=2-2]
	%\arrow[from=1-3, to=2-4]
	\arrow["\in"{description}, draw=none, from=2-1, to=2-2]
	\arrow[from=2-2, to=3-3]
	%\arrow[from=2-4, to=3-3]
	%\arrow["\ni"{description}, draw=none, from=2-5, to=2-4]
	\arrow["{\langle\alpha\rangle}"', "{m_a}" near end, from=3-3, to=4-2]
	\arrow["{\langle\beta\rangle}", "{m_b}"' near end, from=3-3, to=4-4]
	\arrow["\in"{description}, draw=none, from=4-1, to=4-2]
	\arrow[""',from=4-2, to=5-3]
	\arrow["",from=4-4, to=5-3]
	\arrow["\ni"{description}, draw=none, from=4-5, to=4-4]
	\arrow["\lrcorner"{anchor=center, pos=0.125, rotate=135}, draw=none, from=5-3, to=3-3]
    %\arrow[from=3-1, to=4-2]
    %\arrow[from=3-1, to=2-2]
    \arrow["\ni"{description}, draw=none, from=5-3, to=5-4]
    \arrow["\ni"{description}, draw=none, from=1-3, to=1-4]
\end{tikzcd}\]

\begin{proof}
    {Assertion (i) is immediate from the assumptions.}
    {To verify (ii), we expand the product by gathering the $b$-terms to the right.
	\begin{align*}
		\left(ab\right)^{ m} &= \left(a\cdot \tilde\beta(a)\cdots \tilde\beta^{{m_b}-1}(a)\right)^{{m_a}} b^{ m} & \text{$b$ acts via $\tilde\beta$} \\
		&= \left(a^{{m_a}}\cdot \tilde\beta\left(a^{{m_a}}\right)\cdots \tilde\beta^{{m_b}-1}\left(a^{{m_a}}\right)\right) b^{ m} \\
		&= N_{\mathcal L_a/\mathcal F}\left(a^{{m_a}}\right) b^{ m} & \langle\tilde\beta|_{\mathcal L_a}\rangle=\Gal(\mathcal L_a/\mathcal F)
	\end{align*}}

    {We proceed to show (iv). From the definition, it is clear that $\mathfrak B\subseteq \mathfrak A$. By applying the assumption $a^{-1}\tilde \beta(a)\in\mathcal L_a$, we see that for all $k\ge0$, $a^{-1}\tilde\beta^k(a)\in\mathcal L_a^\times$, and there is a unit $u_k\in\mathcal L_a^\times$ such that $(ab)^k=u_k a^k b^k$.}
    {With $k=m_a$, we obtain $(ab)^{m_a}=u_{m_a} a^{m_a} b^{m_a}\in \mathfrak B$. Since $u_{m_a}\in\mathcal L_a^\times$ and $a^{m_a}\in\mathcal L_a^\times\subseteq \mathcal K\subseteq \mathfrak B$, we deduce that $b^{m_a}\in\mathfrak B$. By coprimality of $m_a$ and $m_b$, we can write $m_a t=1+m_b s$ for some $t,s\in\ZZ$. Then $b^{m_at}=b\cdot b^{m_b s}$, and since $b^{m_b}\in\mathcal K^\times$, we have $b\in\mathfrak B$. Since $ab\in\mathfrak B$ by definition, this also implies $a\in\mathfrak B$, so $a^\ell b^j\in\mathfrak B$ for all $0\le \ell\le m_a-1$ and $0\le j\le m_b-1$, which shows $\mathfrak A\subseteq \mathfrak B$, thereby concluding the proof of (iv).}

    {Finally we show (iii). Using (iv) and the assumed $\mathcal K(a)$-linear independence of $1,b,\ldots,b^{m_b-1}$, we have $\dim_{\mathcal K} \mathfrak B=\dim_{\mathcal K}\mathfrak A=m$, whence the sum in the definition of $\mathfrak B$ is direct. For all $x\in \mathcal K$, we have $ab\cdot x=a \cdot \beta(x) b=(\alpha\beta)(x)\cdot ab$, which shows that $\mathfrak B$ possesses the claimed cyclic algebra structure.}
\end{proof}

{We now return to the double sum in \eqref{eq:A-double}.}

\begin{corollary} \label{A-cyclic}
	The algebra $A$ has the following cyclic algebra description:
	\begin{equation*}
		A=\bigoplus_{k=0}^{\frac{w_\chi}{\vf}\sFeta-1} \Q^{ F(\eta)(\omega)}\left(\Gamma^{w_\chi}\right)\left(\pi_{D_\eta} \gamma^{\vf}\right)^k  .
	\end{equation*}
	Conjugation by $\pi_{D_\eta} \gamma^{\vf}$ acts as $\sigma\tauf$ {on $\Q^{ F(\eta)(\omega)}\left(\Gamma^{w_\chi}\right)$}, and \[\left(\pi_{D_\eta} \gamma^{\vf}\right)^{\frac{w_\chi}{\vf} \sFeta}=N_{ F(\eta)/ F_\chi}(\pi_\eta)\gamma^{w_\chi \sFeta}  .\]
\end{corollary}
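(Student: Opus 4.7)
The plan is to apply \cref{combining-double-sum} directly to the last displayed form of \eqref{eq:A-double}. First I would fix the dictionary between the abstract data of the \namecref{combining-double-sum} and the situation at hand: take $\mathcal F \colonequals \Q^{F_\chi}(\Gamma^{w_\chi})$, $\mathcal K \colonequals \Q^{F(\eta)(\omega)}(\Gamma^{w_\chi})$, $\mathcal L_a \colonequals \Q^{F(\eta)}(\Gamma^{w_\chi})$ (the subfield fixed by $\sigma$), $\mathcal L_b \colonequals \Q^{F_\chi(\omega)}(\Gamma^{w_\chi})$ (the subfield fixed by $\tauf$), $\alpha \colonequals \sigma$, $\beta \colonequals \tauf$, $a \colonequals \pi_{D_\eta}$, and $b \colonequals \gamma^{\vf}$.

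Next I would verify the hypotheses. The extension $\mathcal K/\mathcal F$ is Galois with group the direct product of the cyclic groups $\langle\sigma\rangle$ and $\langle\tauf\rangle$, whose orders $\sFeta$ and $w_\chi/\vf$ are coprime (since $\sFeta\mid p-1$ by \cite[Satz~10]{Witt1952} while $w_\chi/\vf$ is a $p$-power); consequently $\Gal(\mathcal K/\mathcal F)$ is cyclic and $\mathcal L_a\mathcal L_b=\mathcal K$. The element $a^{\sFeta}=\pi_\eta$ lies in $F(\eta)\subseteq\mathcal L_a$, and $b^{w_\chi/\vf}=\gamma^{w_\chi}$ lies in $\Gamma_0\subset\mathcal L_b$. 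The required extension of $\tauf$ to $\mathcal K(a)=\Q^{F(\eta)(\omega)(\pi_{D_\eta})}(\Gamma^{w_\chi})$ of the same order $w_\chi/\vf$ is furnished by the automorphism constructed in \cref{extending-tau-uniquely,extending-tau-to-D}, and it satisfies $\tilde\tau(\pi_{D_\eta})/\pi_{D_\eta}=\epsilon_D\in\OO_{F(\eta)}^\times\subseteq\mathcal L_a^\times$ by \cref{extending-tau-non-uniquely}. Inside $A$ conjugation by $\pi_{D_\eta}$ realises $\sigma$ on $\mathcal K$ and conjugation by $\gamma^{\vf}$ realises $\tilde\tau$ on $\mathcal K(a)$, matching the multiplication conventions of \cref{combining-double-sum}.

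With the dictionary in place, \cref{combining-double-sum} immediately produces the cyclic algebra decomposition
\[A=\bigoplus_{k=0}^{\sFeta w_\chi/\vf-1}\mathcal K\cdot(\pi_{D_\eta}\gamma^{\vf})^k\]
with conjugation by $\pi_{D_\eta}\gamma^{\vf}$ acting as $\sigma\tauf$, and delivers the power identity
\[(\pi_{D_\eta}\gamma^{\vf})^{\sFeta w_\chi/\vf}=N_{\mathcal L_a/\mathcal F}(\pi_\eta)\cdot\gamma^{w_\chi \sFeta}.\]
The closing observation is that $\pi_\eta\in F(\eta)$ and the norm $N_{\mathcal L_a/\mathcal F}$ restricts on $F(\eta)$ to the classical field norm $N_{F(\eta)/F_\chi}$ (the Galois action on $\Gamma^{w_\chi}$ being trivial), so the right hand side simplifies to $N_{F(\eta)/F_\chi}(\pi_\eta)\gamma^{w_\chi\sFeta}$. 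I do not expect a serious obstacle here: the substantive work has already been done in \cref{combining-double-sum} and in the extension results \cref{extending-tau-uniquely,extending-tau-to-D,extending-tau-non-uniquely}, and what remains is pure bookkeeping.
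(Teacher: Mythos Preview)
Your proof is correct and follows exactly the same route as the paper: apply \cref{combining-double-sum} with the identical dictionary $\mathcal K=\Q^{F(\eta)(\omega)}(\Gamma^{w_\chi})$, $\mathcal F=\Q^{F_\chi}(\Gamma^{w_\chi})$, $\mathcal L_a=\Q^{F(\eta)}(\Gamma^{w_\chi})$, $\mathcal L_b=\Q^{F_\chi(\omega)}(\Gamma^{w_\chi})$, $a=\pi_{D_\eta}$, $b=\gamma^{\vf}$, $\alpha=\sigma$, $\beta=\tauf$, invoking \cref{extending-tau-to-D,extending-tau-non-uniquely} for the existence and properties of $\tilde\beta$. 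Your version is in fact more explicit about the hypothesis verification and about why the abstract norm $N_{\mathcal L_a/\mathcal F}$ becomes $N_{F(\eta)/F_\chi}$; the only cosmetic slip is the aside ``$\gamma^{w_\chi}\in\Gamma_0$'' (in general one has $\Gamma_0\subseteq\Gamma^{w_\chi}$, not the reverse), but the relevant conclusion $\gamma^{w_\chi}\in\mathcal L_b$ holds directly since $\gamma^{w_\chi}$ generates $\Gamma^{w_\chi}$.
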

\begin{proof}
    {Apply \cref{combining-double-sum} with $\mathcal K\colonequals\Q^{F(\eta)(\omega)}(\Gamma^{w_\chi})$, $\mathcal F\colonequals\Q^{F_\chi}(\Gamma^{w_\chi})$, $\mathcal L_a\colonequals\Q^{F(\eta)}(\Gamma^{w_\chi})$, $\mathcal L_b\colonequals\Q^{F_\chi(\omega)}(\Gamma^{w_\chi})$, $a\colonequals\pi_{D_\eta}$, $b\colonequals\gamma^{v_\chi}$, $\alpha\colonequals\sigma$, and $\beta\colonequals\tau$.
    The degrees are $m_a=s_\eta$ and $m_b=w_\chi/v_\chi$: the former divides $p-1$ and the latter is a $p$-power, so they are coprime. Existence of $\tilde\beta$ is \cref{extending-tau-to-D}, which acts as multiplication by a unit due to \cref{extending-tau-non-uniquely}. For the second displayed formula, recall that $\pi_{D_\eta}^{s_\eta}=\pi_\eta$.}
\end{proof}

The fixed field of the automorphism $\sigma\tauf$ of $ F(\eta)(\omega)$ is $ F_\chi$, {and $\Gal(F(\eta)(\omega)/F_\chi)=\langle(\sigma\tauf\rangle|_{F(\eta)(\omega)})$. Noting that $\Gal(\Q^{ F(\eta)(\omega)}\left(\Gamma^{w_\chi}\right)/\Q^{ F_\chi}\left(\Gamma^{w_\chi}\right))\simeq\Gal(F(\eta)(\omega)/F_\chi)$, it follows from \cref{A-cyclic} that} the centre of $A$ is $\Q^{ F_\chi}\left(\Gamma^{w_\chi}\right)$. Therefore in the usual notation of cyclic algebras, \cref{A-cyclic} means that $A$ is of the form
\[A=\left(\Q^{ F(\eta)(\omega)}\left(\Gamma^{w_\chi}\right) \Big/ \Q^{ F_\chi}\left(\Gamma^{w_\chi}\right), \sigma\tauf, N_{ F(\eta)/ F_\chi}(\pi_\eta)\cdot\gamma^{w_\chi \sFeta}\right)  .\]

Fix an isomorphism $\ZZ_p\llb \Gamma^{w_\chi}\rrb\simeq \ZZ_p[[T]]$ with $\gamma^{w_\chi}$ corresponding to $(1+T)$. Under this isomorphism, we have
\begin{align*}
	\Q^{ F(\eta)(\omega)}(\Gamma^{w_\chi})&\simeq\Frac(\OO_{ F(\eta)(\omega)}[[T]])\equalscolon \KKK  , \\
	\Q^{ F_\chi}(\Gamma^{w_\chi})&\simeq\Frac(\OO_{ F_\chi}[[T]])\equalscolon \FFF  .
\end{align*}
The element $N_{ F(\eta)/ F_\chi}(\pi_\eta)\cdot\gamma^{w_\chi \sFeta} \in \Q^{ F_\chi}(\Gamma^{w_\chi})$ is then identified with 
\[a\colonequals N_{ F(\eta)/ F_\chi}(\pi_\eta)(1+T)^{\sFeta} \in \FFF  .\]

Wedderburn's theorem provides a sufficient (but not necessary) condition as to whether a cyclic algebra is a skew field, see \cite[(30.7)]{MO} or \cite[(14.9)]{Lam}. In this case, it states that $A$ is a skew field whenever $a$ has order $\frac{w_\chi}{\vf}\sFeta$ in the norm factor group $\FFF^\times / N_{\KKK/\FFF}(\KKK^\times)$. Since $\sFeta$ and $w_\chi/\vf$ are coprime, in order to be able to apply Wedderburn's theorem, it suffices to show that the order of $a$ is divisible by both of them; {the converse divisibility follows from Artin reciprocity.}

\begin{lemma}
	The order of $a$ in the norm factor group is divisible by $\sFeta$.
\end{lemma}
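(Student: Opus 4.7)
The plan is to reduce the problem to a classical norm computation for the extension $F(\eta)(\omega)/F_\chi$ by specialising at $T=0$, and then invoke local class field theory.

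First, I would pass to the completions at the $(T)$-adic valuation. The prime $(T)\subset\OO_{F_\chi}[[T]]$ has a unique preimage in $\OO_{F(\eta)(\omega)}[[T]]$, since the quotient $\OO_{F(\eta)(\omega)}[[T]]/(T)=\OO_{F(\eta)(\omega)}$ is a domain; consequently the completions $\hat{\FFF}=F_\chi((T))$ and $\hat{\KKK}=F(\eta)(\omega)((T))$ are fields, and the norm $N_{\hat\KKK/\hat\FFF}$ restricts to $N_{\KKK/\FFF}$.

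Now suppose $a^k=N_{\KKK/\FFF}(b)$ for some $b\in\KKK^\times$. The element $a=N_{F(\eta)/F_\chi}(\pi_\eta)(1+T)^{\sFeta}$ has $T$-adic valuation zero, because its constant term $\varpi\colonequals N_{F(\eta)/F_\chi}(\pi_\eta)$ is nonzero. Comparing $T$-adic valuations of both sides in $\hat\FFF$ forces $v_T(b)=0$, so $b\in F(\eta)(\omega)[[T]]^\times$. Since the Galois action on $\hat\KKK/\hat\FFF$ fixes $T$, evaluation at $T=0$ commutes with taking the norm, and reducing modulo $T$ yields $\varpi^k=N_{F(\eta)(\omega)/F_\chi}(b(0))$ in $F_\chi^\times$.

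It therefore suffices to show that $\varpi$ has order divisible by $\sFeta$ in the classical norm factor group $F_\chi^\times/N_{F(\eta)(\omega)/F_\chi}(F(\eta)(\omega)^\times)$. Because $F(\eta)/F_\chi$ is totally ramified, $\varpi$ is a uniformiser of $F_\chi$; and because $F_\chi(\omega)/F_\chi$ is unramified of degree $\sFeta$, it constitutes the maximal unramified subextension of $F(\eta)(\omega)/F_\chi$. By local class field theory, the reciprocity map identifies the norm factor group with $\Gal(F(\eta)(\omega)/F_\chi)$, and the image of any uniformiser of $F_\chi$ restricts to the Frobenius of $F_\chi(\omega)/F_\chi$, which has order $\sFeta$. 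Hence the image of $\varpi$ has order divisible by $\sFeta$, and $\sFeta\mid k$ follows. The only mildly delicate step is the unique extension of the $(T)$-adic valuation noted above; everything else is standard.
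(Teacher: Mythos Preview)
Your proof is correct and shares the paper's core reduction: both arguments specialise at $T=0$ (the paper's ``augmentation map'' after localising at $(T)$ is exactly your evaluation at $T=0$ after completing at $(T)$) to land in the classical norm factor group for $F(\eta)(\omega)/F_\chi$.

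The finishing step is where you diverge. The paper concludes via an elementary valuation argument: writing the resulting equation as $N_{F(\eta)/F_\chi}\big(N_{F(\eta)(\omega)/F(\eta)}(\aug(\alpha))\big)=N_{F(\eta)/F_\chi}(\pi_\eta^i)$ and comparing valuations gives $\ord_{\pi_\eta}\big(N_{F(\eta)(\omega)/F(\eta)}(\aug(\alpha))\big)=i$, and since $F(\eta)(\omega)/F(\eta)$ is unramified of degree $\sFeta$, the norm has $\pi_\eta$-valuation in $\sFeta\ZZ$, forcing $\sFeta\mid i$. You instead invoke local class field theory: the reciprocity map sends the uniformiser $\varpi=N_{F(\eta)/F_\chi}(\pi_\eta)$ to an element of $\Gal(F(\eta)(\omega)/F_\chi)$ restricting to Frobenius on the maximal unramified subextension $F_\chi(\omega)$, hence of order divisible by $\sFeta$. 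Your route is more conceptual but uses heavier machinery; the paper's is more elementary and in fact does not rely on the standing totally-ramified hypothesis, whereas yours does (both to make $\varpi$ a uniformiser and to ensure $(F_\chi(\omega):F_\chi)=\sFeta$).
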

\begin{proof}
	Recall the augmentation exact sequence:
	\begin{align*}
		0\to T \OO_{ F(\eta)(\omega)}[[T]]\to \OO_{ F(\eta)(\omega)}[[T]] &\xrightarrow{\aug} \OO_{ F(\eta)(\omega)}\to 0\\
		T&\xmapsto{\phantom{\aug}} 0
	\end{align*}
	Localising at the kernel, which is a prime ideal, allows us to extend the augmentation map to 
	\[\aug: \OO_{ F(\eta)(\omega)}[[T]]_{(T)} \to  F(\eta)(\omega)  .\]
	As the Galois action is trivial on $T$, the augmentation map is compatible with the norm maps, that is, for all $x\in\OO_{ F(\eta)(\omega)}[[T]]_{(T)}$,
	\[\aug\left(N_{\KKK/\FFF}(x)\right)=N_{ F(\eta)(\omega)/ F_\chi}(\aug(x))  .\]
	Suppose that there is an $\alpha\in\KKK$ such that $N_{\KKK/\FFF}(\alpha)=a^i$. Since $T\nmid a^i$, such an element $\alpha$ is in fact contained in $\OO_{ F(\eta)(\omega)}[[T]]_{(T)}$. Thus the augmentation map is defined on $\alpha$, and 
	\begin{align} 
		N_{ F(\eta)(\omega)/ F_\chi}(\aug(\alpha))&=\aug(a^i)  . \notag\\
		\intertext{Using the definition of $a$ as well as transitivity of norms in a tower of extensions, we get}
		N_{ F(\eta)/ F_\chi}\left(N_{ F(\eta)(\omega)/{F(\eta)}}(\aug(\alpha))\right)&=N_{ F(\eta)/ F_\chi}(\pi_\eta^i)  . \label{eq:aug-alpha}
	\end{align}
	In particular, the two sides of \eqref{eq:aug-alpha} have the same valuation in $ F_\chi$:
	\[\ord_{\pi_\eta}N_{ F(\eta)(\omega)/{F(\eta)}}(\aug(\alpha)) =i  .\]
	The extension $ F(\eta)(\omega)/ F(\eta)$ is unramified of degree $\sFeta$, therefore $\ord_{\pi_\eta} \circ N_{ F(\eta)(\omega)/\QQ_{p}( \eta)}$ has image in $\sFeta\ZZ$. In particular, $\sFeta\mid i$.
\end{proof}

Therefore $A$ is a skew field if (but not necessarily only if) $w_\chi/\vf$ also divides the order of $a$. The extensions $ F(\eta)/ F_\chi$ and $ F_\chi(\omega)/ F_\chi$ have coprime degrees $w_\chi/\vf$ resp. $\sFeta$, and their compositum is $ F(\eta)(\omega)$. {Let $\LLL\colonequals \Frac(\OO_{ F(\eta)}[[T]])$. 
Then by transitivity of norm maps, there is a canonical surjective homomorphism
\[\FFF^\times\big/N_{\KKK/\FFF}(\KKK^\times) = \FFF^\times\big/N_{\LLL/\FFF}\left(N_{\KKK/\LLL}(\KKK^\times)\right) \twoheadrightarrow \FFF^\times\big/N_{\LLL/\FFF}(\LLL^\times) . \]
It follows that} $a$ has order divisible by $w_\chi/\vf$ in $\FFF^\times/N_{\KKK/\FFF}(\KKK^\times)$ if its image has order divisible by $w_\chi/\vf$ in $\FFF^\times/N_{\LLL/\FFF}(\LLL^\times)$.

\begin{lemma} \label{totally-ramified-order}
	Suppose that $ F(\eta)/ F_\chi$ is totally ramified.
	Then the order of $a$ in the norm factor group is divisible by $w_\chi/\vf$.
\end{lemma}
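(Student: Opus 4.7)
The plan is as follows. Suppose $a^i = N_{\LLL/\FFF}(\beta)$ for some $\beta \in \LLL^\times$; the goal is to deduce $\e \mid i$. Since $ F(\eta)/ F_\chi$ is totally ramified of degree $\e = w_\chi/\vf$, the element $N_{ F(\eta)/ F_\chi}(\pi_\eta)$ is a uniformizer of $ F_\chi$ (its $ F_\chi$-valuation equals the residue degree $\f = 1$). Choose $\pi_{ F_\chi} \colonequals N_{ F(\eta)/ F_\chi}(\pi_\eta)$, so that $a = \pi_{ F_\chi}(1+T)^{\sFeta}$.

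First I would analyse valuations at the height-$1$ prime $(\pi_{ F_\chi})$ of $\OO_{ F_\chi}[[T]]$. The unique prime of $\OO_{ F(\eta)}[[T]]$ above it is $(\pi_\eta)$, with ramification index $\e$ and residue degree $1$. Comparing $(\pi_{ F_\chi})$-valuations in $a^i = N_{\LLL/\FFF}(\beta)$ gives $v_{(\pi_\eta)}(\beta) = i$. Writing $\beta = \pi_\eta^i u$ with $u$ a unit in the localization and using $N(\pi_\eta)^i = \pi_{ F_\chi}^i$, cancellation yields $(1+T)^{\sFeta i} = N_{\LLL/\FFF}(u)$. The residue field of both $\OO_{ F_\chi}[[T]]_{(\pi_{ F_\chi})}$ and $\OO_{ F(\eta)}[[T]]_{(\pi_\eta)}$ is $\kappa_{ F_\chi}((T))$, and the Galois group lies in the inertia, so it acts trivially on this residue field. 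Hence reducing $(1+T)^{\sFeta i} = N(u)$ modulo $\pi_{ F_\chi}$ yields $(1+\bar T)^{\sFeta i} = \bar u^\e$ in $\kappa_{ F_\chi}((T))^\times$, where $\bar u$ denotes the residue of $u$. Using the decomposition $\kappa_{ F_\chi}((T))^\times = \bar T^\ZZ \times \kappa_{ F_\chi}^\times \times \big(1 + \bar T \kappa_{ F_\chi}[[\bar T]]\big)$ and the fact that the left-hand side is a $1$-unit, the problem reduces to showing: if $v^\e = (1+\bar T)^{\sFeta i}$ for some $v \in 1 + \bar T \kappa_{ F_\chi}[[\bar T]]$, then $\e \mid i$.

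The main technical step, and the main obstacle, is the resulting characteristic-$p$ computation. Writing $\e = p^{k_0}$ and using the Frobenius identity $(1 + \bar T g(\bar T))^{p^{k_0}} = 1 + \bar T^{p^{k_0}} g(\bar T)^{p^{k_0}}$ in $\kappa_{ F_\chi}[[\bar T]]$ (valid in characteristic $p$), the coefficients of $v^\e$ at $\bar T^j$ vanish whenever $p^{k_0} \nmid j$. Comparing with $(1+\bar T)^{\sFeta i} = \sum_j \binom{\sFeta i}{j} \bar T^j$ and taking $j = p^l$ for $0 \le l < k_0$ shows $\binom{\sFeta i}{p^l} \equiv 0 \pmod p$. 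By Lucas's theorem these vanishings force the base-$p$ digits of $\sFeta i$ at positions $0,\ldots,k_0 - 1$ to be zero, so $p^{k_0} \mid \sFeta i$; since $\sFeta \mid p - 1$ is coprime to $p$, we conclude $\e = p^{k_0} \mid i$, as desired.
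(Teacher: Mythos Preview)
Your proof is correct and takes a genuinely different route from the paper's. The paper first observes that $N_{F(\eta)/F_\chi}(\pi_\eta)$ is already a norm from $\LLL$, so the class of $a$ equals that of $(1+T)^{s_\eta}$; it then uses Weierstra\ss{} preparation to arrange that a putative norm-preimage $\alpha$ lies in $\OO_{F(\eta)}[[T]]$, and \emph{specialises} $T$ to $u-1$ for a principal unit $u\in U^1_{F_\chi}$ generating the cyclic group $U^1_{F_\chi}/N_{F(\eta)/F_\chi}(U^1_{F(\eta)})\simeq\ZZ/e\ZZ$ (this isomorphism is the external input, a Tate-cohomology computation for totally ramified cyclic extensions of local fields). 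Your argument instead localises at the height-$1$ prime $(\pi_{F_\chi})$ and \emph{reduces} to the characteristic-$p$ residue field $\kappa_{F_\chi}((T))$, where the norm collapses to the $e$-th power map because inertia acts trivially on residues, and then finishes with an elementary Frobenius/Lucas computation on $1$-units in $\kappa_{F_\chi}[[\bar T]]$. Your route is entirely self-contained---no appeal to the cohomology of units in local field extensions---and exploits the characteristic-$p$ structure of the residue field directly; the paper's route stays in characteristic $0$ and imports the needed divisibility from the arithmetic of $F(\eta)/F_\chi$ via evaluation at a well-chosen point.
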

\begin{proof}
	In the norm factor group $\FFF^\times/N_{\LLL/\FFF}(\LLL^\times)$, the class of $a$ {agrees} with that of $(1+T)^{\sFeta}$.
	So suppose that $(1+T)^{\sFeta i}$ is the norm of some $\alpha(T)\in\LLL$. 
	
	We first show that without loss of generality, we may assume $\alpha(T)\in\OO_{ F(\eta)}[[T]]$. Indeed, the Weierstraß preparation theorem allows us to write 
	\[\alpha(T)=\pi_\eta^\ell \cdot\frac{F(T)}{G(T)}\]
	where $\ell\in\ZZ$ and $F(T)\in \OO_{ F(\eta)}[[T]]$ and 
	\[G(T)=\prod_{j=1}^k P_j(T)\in \OO_{ F(\eta)}[T] \] 
	is a product of distinguished irreducible polynomials $P_j(T)$, {not necessarily distinct}. The norm of $\alpha(T)$ is the product of its Galois conjugates; it follows that $\ell=0$. Moreover, since $(1+T)^{\sFeta i}$ has no denominator and since each $P_j(T)$ is irreducible, for each $j$ there exists a Galois conjugate $\tilde P_j(T)$ of $P_j(T)$ such that {$\prod_{j=1}^k\tilde P_j(T)\mid F(T)$}. Then 
	\[\alpha(T)\cdot\prod_{j=1}^k \frac{P_j(T)}{\tilde P_j(T)}\in\OO_{ F(\eta)}[[T]]\]
	has the same norm as $\alpha(T)$. From now on, we assume that $\alpha(T)\in\OO_{ F(\eta)}[[T]]$.
	
	Since $\alpha$ has integral coefficients, it is convergent at every element $x$ of the maximal ideal $\mm_\chi$ of $ F_\chi$. Moreover, since the Galois action is trivial on $T$ as well as on $ F_\chi$, for all $x \in \mm_\chi$ we have
	\begin{equation} \label{eq:N-seta}
		N_{\LLL/\FFF}(\alpha(T))\big|_{T=x}=N_{ F(\eta)/ F_\chi}(\alpha(x))=(1+x)^{\sFeta i}  .
	\end{equation}
	
	For totally ramified cyclic extensions of local fields, all Tate cohomology groups of the unit group are cyclic of order equal to the degree, see \cite[Corollary~2.11]{EN}. In particular,
	\begin{equation} \label{eq:EN-Tate-0}
		\hat H^{0}\left(\Gal( F(\eta)/ F_\chi), \OO_{ F(\eta)}^\times\right)\simeq \ZZ\big/\textstyle\frac{w_\chi}{\vf}\ZZ  .
	\end{equation}
	The left hand side is the unit norm factor group
	$\OO_{ F_\chi}^\times \big/ N_{ F(\eta)/ F_\chi} \OO_{ F(\eta)}^\times$. If $\#\overline{ F_\chi}$ denotes the order of the residue field of $ F_\chi$, then there is a decomposition
	\[\OO_{ F_\chi}^\times\simeq\mu_{\#\overline{ F_{\chi}}-1}\times U^1_{ F_\chi}  .\] 
	The group of roots of unity here has order coprime to $p$, while $w_\chi/\vf$ is a $p$-power. Therefore the isomorphism \eqref{eq:EN-Tate-0} on the unit norm factor group descends to principal units:
	\[U^1_{ F_\chi} \Big/ N_{ F(\eta)/ F_\chi}\left(U^1_{ F(\eta)}\right)\simeq \ZZ/\textstyle\frac{w_\chi}{\vf}\ZZ  .\]
	Let $u\in U^1_{ F_\chi}$ be a principal unit whose image in this factor group is a generator. Then $u^{\sFeta}$ is also a generator, since $\sFeta$ is coprime to $p$. Evaluating \eqref{eq:N-seta} at $T\colonequals u-1$, we get that $u^{\sFeta i}$ is a norm. Therefore $w_\chi/\vf\mid i$, as was to be shown.
\end{proof}
This concludes the proof of \cref{fQGf-skew-field}. \qed
\begin{remark} \label{rmk:total-ramification-technical}
	The argument in the proof of \cref{totally-ramified-order} does not generalise to not necessarily totally ramified extensions, because \eqref{eq:EN-Tate-0} may fail. {An explicit example in which $f_\eta^{(j)}$ fails to be indecomposable in $\Q^F(\G)$ is described in \cite[Example~2.4.12]{thesis}.}
\end{remark}

\subsection{Proof of \texorpdfstring{\cref{dim-Schur}}{Theorem~\ref{dim-Schur}} in the totally ramified case}
\label{sec:dimSchur}
In the proof of \cite[Corollary~1.13]{NickelConductor}, Nickel showed that
\[w_\chi \nFeta \sFeta  = w_\chi \eta(1) = \chi(1) = \nFchi \sFchi \,\big|\, \nFchi \sFeta {(F(\eta):F{\chi})}= \nFchi \sFeta \frac{w_\chi}{\vf}  , \]
where the last equality is due to \cref{Qpeta-Qpchi-cyclic}. Therefore $\nFeta \vf\mid \nFchi$; in particular, we have an inequality $\vf \nFeta\le \nFchi$. We now show that this is sharp.

On the one hand, we may express the primitive central idempotent $\epsilon_\chi$ of $\Q^ F(\G)$ as
\begin{equation} \label{eq:epsilon-chi-decomp}
	\epsilon_\chi =\sum_{i=0}^{\vf-1} \epsilon(\eta_{i})=\sum_{i=0}^{\vf-1} \sum_{j=1}^{n_{\eta_{i}}} f_{\eta_{i}}^{(j)}  ,
\end{equation}
where the $\eta_{i}$s are as in \eqref{eta-i}, and we used that $\nu^{ F}_\chi=\vf$, see \cref{Qpeta-Qpchi-cyclic}.
The skew field $D_{\eta_{i}}$ has centre $ F(\eta_{i})= F(\eta)$, which is the same for every $i$ (see \cref{sec:preliminaries}).
The $\eta_{i}$s are all $\G$-conjugates of one another, see \eqref{eta-i}, so in particular, the dimensions $\eta_{i}(1)$ all agree, and so do the Schur indices $s_{\eta_{i}}=\sFeta$.
Therefore
\[\dim_{ F(\eta)} M_{n_{\eta_{i}}}(D_{\eta_{i}}) = s_{\eta_{i}}^2 n_{\eta_{i}}^2=\eta_{i}(1)^2=\eta(1)^2=\sFeta^2 \nFeta^2  . \]
It follows that $n_{\eta_{i}}=\nFeta$ is the same for all $i$.
So \eqref{eq:epsilon-chi-decomp} is an expression of $\epsilon_\chi$ as a sum of $\vf \nFeta$ idempotents. These are indecomposable by \cref{fQGf-skew-field}. Equivalently, all right ideals $f_{\eta}^{(j)} \Q^ F(\G)$ are simple right modules by \cite[(3.18.iii)]{CR}. This gives rise to a strictly descending chain of submodules of $\Q^ F(\G)\epsilon_\chi$ of length $\vf \nFeta$, with the factor modules being simple: in other words, this is a composition series for $\Q^ F(\G)\epsilon_\chi$.

On the other hand, 
\begin{equation*}
	\epsilon_\chi=\sum_{i=1}^{\nFchi} f_\chi^{(j)}
\end{equation*} 
is another decomposition of $\epsilon_\chi$. The idempotents $f_{\chi}^{(j)}$ are indecomposable in $\Q^ F(\G)$, so there is a composition series of length $\nFchi$.
Since any two composition series have the same length by \cite[(3.9)]{CR}, we obtain
\begin{equation} \label{eq:s-neta-nchi}
	\vf \nFeta=\nFchi  .
\end{equation}

The assertion about the Schur indices follows readily:
\begin{align*}
	\sFchi &= \frac{w_\chi \sFeta \nFeta}{\nFchi} & \text{shown in the proof of \cite[Corollary 1.13]{NickelConductor}} \\
	&= \frac{w_\chi \sFeta}{\vf} & \text{by \eqref{eq:s-neta-nchi}} \\
	&= ( F(\eta): F_{\chi}) \sFeta & \text{by \eqref{wchi-s}}
\end{align*}
This finishes the proof of \cref{dim-Schur} in the totally ramified case. \qed

\subsection{Preparations for the proof of \texorpdfstring{\cref{D-conj}}{Theorem~\ref{D-conj}}}

{Let $A_\chi$ be the algebra
\[A_\chi\colonequals \bigoplus_{\substack{i=0 \\ \vf \mid i}}^{p^{n_0}-1} \tD_\eta \cdot (\gcp)^{i/\vf}=\bigoplus_{\substack{i=0}}^{\frac{p^{n_0}}{\vf}-1} \tD_\eta \cdot (\gcp)^{i},\]
with conjugation by $\gcp$ acting as $\tauf$ on $\tD_\eta$. Here the second sum is simply a rewriting of the first one.}

{We will now give a further description of the algebra $A_\chi$; note that our results on $A_\chi$ won't use the totally ramified assumption. In the next section, we will proceed to show that $D_\chi$ is isomorphic to $A_\chi$ under the total ramification hypothesis.}

\begin{lemma} \label{Achi-centre}
    {The algebra $A_\chi$ has Schur index $s_\eta \cdot w_\chi/v_\chi$ and centre}
    \[{\cent(A_\chi)=\Q^{ F_{\chi}}\left((\Gcp)^{w_\chi/\vf}\right).}\]
\end{lemma}
\begin{proof}
    The element $(\gcp)^{w_\chi/\vf}$ is central in $A_{\chi}$: {this follows from \cref{gcp-conjugation} and the fact that $\tauf$ has order $w_\chi/\vf$. The subfield $F_{\chi}$ is central in $A_{\chi}$ because its elements commute with all elements of $\tD_\eta$ (it even lies in the centre of $D_\eta$) as well as with $\gcp$ (because of \cref{gcp-conjugation} and the fact that $F_\chi$ is the fixed field of $\tauf$).} It follows that $\Q^{ F_{\chi}}\left((\Gcp)^{w_\chi/\vf}\right)$ is a central subfield in $A_{\chi}$, that is,
	\begin{equation*}
		\Q^{ F_{\chi}}\left((\Gcp)^{w_\chi/\vf}\right) \subseteq \cent(A_{\chi}).
	\end{equation*}

    {For the converse, let 
    \[z=\sum_{\substack{i=0 \\ \vf \mid i}}^{p^{n_0}-1} z_i (\gcp)^{i/v_\chi} \in \cent(A_\chi)\]
    be a central element, where $z_i\in\tD_\eta$ for all $i$ in the summation range. Then $z$ commutes with $\gcp$, conjugation by which acts as $\tau$; since $\gamma_0$ is central and the elements in $D_\eta$ fixed by $\tau$ are precisely those in $F_\chi$, we conclude that $z_i\in\Q^{F_\chi}(\Gamma_0)$ for all $i$. Let $d\in D_\eta^\times$ be an element such that $\tau^j(d)\ne d$ for all $1\le j<w_\chi/v_\chi$: such an element exists because $\tau$ has order $w_\chi/v_\chi$. The central element $z$ also commutes with this $d$: then $d z d^{-1}=z$ forces $dz_i (\gcp)^{i/v_\chi} d^{-1}=d \tau^{i/v_\chi}(d^{-1}) z_i (\gcp)^{i/v_\chi}$ for each $i$. Since $d=\tau^{i/v_\chi}(d)$ if and only if $w_\chi\mid i$, this shows $z_i=0$ unless $w_\chi \mid i$. Finally, recall that $(\gcp)^{p^{n_0}/\vf}$ differs from $\gamma_0$ by a central unit $\af_{p^{n_0}/\vf}$, as can be seen from applying \cref{power-of-gcp} with $j\colonequals p^{n_0}/\vf$ and \eqref{eq:a-pn0-central}. The claim on $\cent(A_\chi)$ is now established.}

    {The dimension of $A_{\chi}$ as a $\Q^{F_{\chi}}\left((\Gcp)^{w_\chi/\vf}\right)$-vector space is as follows:}
	\begin{align*}
		{\dim_{\Q^{F_{\chi}}\left((\Gcp)^{w_\chi/\vf}\right)} (A_{\chi})}
        &= {\frac{\dim_{\Q^{F_{\chi}}\left((\Gcp)^{p^{n_0}/\vf}\right)} \left(A_{\chi}\right)}{\dim_{\Q^{F_{\chi}}\left((\Gcp)^{w_\chi/\vf}\right)} \left(A_{\chi}\right)}
        = \frac{w_\chi}{p^{n_0}} \cdot \dim_{\Q^{F_{\chi}}\left((\Gcp)^{p^{n_0}/\vf}\right)} \left(A_{\chi}\right)} \\
        &= {\frac{w_\chi}{p^{n_0}}\cdot \frac{p^{n_0}}{v_\chi} \cdot \dim_{\Q^{F_{\chi}}\left((\Gcp)^{p^{n_0}/\vf}\right)} \left(\tD_\eta\right) }\\
        &\mathop{=}^{(\dagger)} {\frac{w_\chi}{v_\chi} \cdot \dim_{\Q^{F_{\chi}}\left((\Gcp)^{p^{n_0}/\vf}\right)} \left(\Q^{F_{\chi}}\left((\Gcp)^{p^{n_0}/\vf}\right)\otimes_{F_\chi} D_\eta\right)} \\
        &= {\frac{w_\chi}{v_\chi} \cdot \dim_{F_\chi} \left(D_\eta\right)
        = \left(\frac{w_\chi}{v_\chi} \cdot s_\eta\right)^2.}
	\end{align*}
	{The step marked $(\dagger)$ is uses the relationship between $\gamma_0$ and $(\gcp)^{p^{n_0}/\vf}$ explained above, as well as $\tD_\eta=\Q^{F(\eta)}(\Gamma_0)\otimes_{F(\eta)} D_\eta=\Q^{F_\chi}(\Gamma_0)\otimes_{F_\chi} D_\eta$. The Schur index of $A_\chi$ is the square root of this dimension.}
\end{proof}

\begin{proposition} \label{Achi-skew-power-series}
    {The ring $A_\chi$ is isomorphic to the total ring of quotients of a skew power series ring, and its centre is isomorphic to the field of fractions of a power series ring, as described by the following commutative diagram:}
	\[
	\begin{tikzcd}
		A_\chi \ar[r, phantom, "="] &[-1em] \displaystyle\bigoplus_{\substack{i=0\\ \vf\mid i}}^{p^{n_0}-1}\tD_\eta \cdot (\gcp)^{i/\vf} \ar[r, "\sim"] & \Quot\Big(\OO_{D_\eta}[[X;\tauf,\tauf-\id]]\Big)=\Dchi \\ 
		\cent(A_\chi) \ar[u, hook] \ar[r,phantom,"="] & \Q^{ F_{\chi}}\left((\Gcp)^{w_\chi/\vf}\right) \ar[r,"\sim"] \ar[u,hook] & \Frac\left(\OO_{ F_{\chi}}[[(1+X)^{w_\chi/\vf}-1]]\right) = \cent(\Dchi) \ar[u,hook]
	\end{tikzcd}
	\]
	{The top horizontal map sends $\gcp\mapsto 1+X$, it is the identity on $\OO_{D_\eta}$, which extends $F$-linearly to $\tD_\eta$ by $\tD_\eta=\Q^F(\Gamma_0)\otimes_F D_\eta$.
	The bottom horizontal map is the identity on $\OO_{ F_{\chi}}$ and sends $(\gcp)^{w_\chi/\vf}\mapsto (1+X)^{w_\chi/\vf}$.}
\end{proposition}

\begin{proof}
    Since $\gcp$ is sent to $(1+X)$, the multiplication rule $\gcp d=\tauf(d)\gcp$ for $d\in D_\eta$ becomes $(1+X)d=\tauf(d)(1+X)$, which is equivalent to $Xd=\tauf(d)X+(\tauf-\id)(d)$: this is indeed the multiplication rule in $\OO_{D_\eta}[[X;\tauf,\tauf-\id]]$. 
	{As we have seen in \cref{Achi-centre}}, the centre is generated by $(\gcp)^{w_\chi/\vf}$; since $\gamma_0$ is central, the image of $\gamma_0$ is therefore determined by the image of $\gcp$. {More precisely, recall from \cref{power-of-gcp} that $\gamma_0=a_{p^{n_0}/v_\chi} (\gcp)^{p^{n_0}/v_\chi}$, and we have seen in \eqref{eq:a-pn0-central} that $\af_{p^{n_0}/\vf}$ is central in $\Q^ F(\Gamma_0)[H]\epsilon(\eta)$, so its counterpart $A_{p^{n_0}/\vf}$ under the Wedderburn isomorphism \eqref{eq:Wedderburn-eta} is central in $M_{n_\eta}(\tD_\eta)$, that is, $A_{p^{n_0}/v_\chi}=\bar A_{p^{n_0}/v_\chi} \cdot \mathbf 1_{n_\eta}$ is a scalar matrix where $\bar A_{p^{n_0}/v_\chi}\in\tD_\eta$. Therefore the top horizontal map in the diagram above sends $\gamma_0\mapsto \bar A_{p^{n_0}/v_\chi}\cdot (1+X)^{p^{n_0}/v_\chi}$.}
    
    We now have that the top horizontal map is well-defined and a ring homomorphism. The lower horizontal map is a well-defined isomorphism induced by the classical isomorphism between the Iwasawa algebra over $\OO_{ F_\chi}$ and the ring of formal power series over $\OO_{ F_\chi}$. Commutativity of the diagram follows directly from the definition of the arrows within.
	
	It remains to show that the top horizontal map is an isomorphism. {It is surjective, as the image contains all coefficients in $\OO_{D_\eta}$ as well as the variable $X$.} {By \cref{Achi-centre}, the crossed product algebra on the left is a left vector space over its centre, of dimension $(s_\eta \cdot w_\chi/v_\chi)^2$.} On the right hand side, the ring $\Quot(\OO_{D_\eta}[[X;\tauf,\tauf-\id]])$ is a skew field with centre $\Frac(\OO_{ F_{\chi}}[[(1+X)^{w_\chi/\vf}-1]])$ (\cref{full-centre-of-skew-power-series-ring}). 
	Since the dimensions agree, it follows that the top horizontal map must be an isomorphism.
\end{proof}

\subsection{Proof of \texorpdfstring{\cref{D-conj}}{Theorem~\ref{D-conj}} in the totally ramified case} \label{sec:D-conj-proof-totally-ramified}
\begin{proposition} \label{Dchi-fQGf}
	Let $\chi\in\Irr(\G)$ and let $\eta\mid\res_H^\G \chi$ be an irreducible constituent. Suppose that $ F(\eta)/ F_\chi$ is totally ramified. Then for all $1\le j\le \nFeta$, there is an isomorphism of rings
	\[\DFchi\simeq f_\eta^{(j)} \Q^ F(\G) f_\eta^{(j)}  .\]
	In particular, the right hand side is independent of the choice of $\eta$ and $j$.
\end{proposition}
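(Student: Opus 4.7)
My strategy is to realise $f_\eta^{(j)}\Q^ F(\G)f_\eta^{(j)}$ as a corner ring of the Wedderburn component $\epsilon_\chi\Q^ F(\G)\simeq M_{\nFchi}(\DFchi)$ cut out by a primitive idempotent, and then invoke \cref{fQGf-skew-field} to identify that corner with $\DFchi$.

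The first step is to verify that $f_\eta^{(j)}\Q^ F(\G)f_\eta^{(j)}$ lies inside the $\chi$-component. By \eqref{eq:epsilon-chi-etas}, $\epsilon_\chi=\sum_{i=0}^{\vf-1}\epsilon(\eta_i)$, where the $\eta_i$ represent the distinct Galois orbits among the irreducible constituents of $\res^\G_H\chi$ (see the proof of \cref{Qpeta-Qpchi-cyclic}). The $\epsilon(\eta_i)$ are the primitive central idempotents of $ F[H]$ attached to pairwise distinct $\sim_ F$-classes, so they are pairwise orthogonal. Since $f_\eta^{(j)}\epsilon(\eta)=f_\eta^{(j)}$ by construction, this yields $f_\eta^{(j)}\epsilon_\chi=f_\eta^{(j)}=\epsilon_\chi f_\eta^{(j)}$, and hence $f_\eta^{(j)}\Q^ F(\G)f_\eta^{(j)}\subseteq \epsilon_\chi\Q^ F(\G)$.

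Next, fix any Wedderburn isomorphism $\Psi:\epsilon_\chi\Q^ F(\G)\xrightarrow{\sim}M_{\nFchi}(\DFchi)$ and let $E\colonequals\Psi(f_\eta^{(j)})$, an idempotent. Then $\Psi$ restricts to a ring isomorphism $f_\eta^{(j)}\Q^ F(\G)f_\eta^{(j)}\xrightarrow{\sim} E M_{\nFchi}(\DFchi) E$. By \cref{fQGf-skew-field} (which applies precisely because $ F(\eta)/ F_\chi$ is totally ramified), the left-hand side is a skew field, which forces $E$ to be a primitive idempotent of the simple artinian ring $M_{\nFchi}(\DFchi)$. Since any two primitive idempotents in a simple artinian ring are conjugate by a unit, $E M_{\nFchi}(\DFchi) E$ is isomorphic to the corner ring at the standard primitive idempotent $\diag(1,0,\ldots,0)$, which is $\DFchi$. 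This gives the desired isomorphism, and independence of the choice of $\eta$ and $j$ is automatic because the conclusion does not refer to them.

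There is no serious obstacle left: \cref{fQGf-skew-field} has done all the real work by guaranteeing that $E$ is primitive, and the remainder is standard Wedderburn--Artin bookkeeping. The only point requiring a moment of care is the orthogonality of the $\epsilon(\eta_i)$, which boils down to the fact that distinct constituents $\eta_i$ lie in distinct Galois orbits, as already observed in \cref{sec:Feta-Fchi}.
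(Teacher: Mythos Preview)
Your proof is correct and follows essentially the same approach as the paper: both arguments place $f_\eta^{(j)}$ under $\epsilon_\chi$, invoke \cref{fQGf-skew-field} to obtain indecomposability (equivalently, primitivity of the image in $M_{\nFchi}(\DFchi)$), and then use the standard fact that a primitive idempotent in a simple artinian ring cuts out the underlying skew field. You spell out a few more details (orthogonality of the $\epsilon(\eta_i)$ and conjugacy of primitive idempotents), but the substance is identical.
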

\begin{proof}
	The $\chi$-part of the Wedderburn decomposition of $\Q^ F(\G)$ is $\epsilon_\chi\Q^ F(\G)\epsilon_\chi\simeq M_{\nFchi}(\DFchi)$. As noted in \eqref{eq:epsilon-chi-decomp}, we have $f_{\eta}^{(j)} \mid \epsilon_\chi$. Indecomposability of $f_{\eta}^{(j)}$ shows the claimed isomorphism.
\end{proof}

\begin{corollary} \label{Dchi-crossed-product}
	Suppose that $ F(\eta)/ F_\chi$ is totally ramified.
	{Then there is an isomorphism of rings
        \[\DFchi\simeq A_\chi=\bigoplus_{\substack{i=0 \\ \vf \mid i}}^{p^{n_0}-1} \tD_\eta \cdot (\gcp)^{i/\vf},\]
    with conjugation by $\gcp$ acting as $\tauf$ on $\tD_\eta$.}
\end{corollary}
\begin{proof}
	{We have the following chain of isomorphisms:}
	\begingroup\allowdisplaybreaks
	\begin{align*}
		\DFchi &\simeq f_\eta^{(1)} \Q^F(\G) f_\eta^{(1)} \simeq \bigoplus_{\substack{i=0 \\ \vf \mid i}}^{p^{n_0}-1} f_\eta^{(1)} \Q^F(\Gamma_0)[H] \gamma^i f_\eta^{(1)} \simeq \bigoplus_{\substack{i=0 \\ \vf \mid i}}^{p^{n_0}-1} f_\eta^{(1)} \Q^F(\Gamma_0)[H] f_\eta^{(1)} \cdot (\gcp)^{i/\vf} .
	\end{align*}
	\endgroup
    Here the first isomorphism is \cref{Dchi-fQGf}, the second one comes from \eqref{eq:Q-decomposition} and \cref{divisibility-by-v_chi}, the third one is \eqref{eq:yeta-higher} and \cref{power-of-gcp}. The claimed isomorphism now follows from \eqref{tDeta-feta}.
	As we have seen in \cref{gcp-conjugation}, conjugation by $\gcp$ acts as $\pt$ on $f_\eta^{(1)} \Q^ F(\Gamma_0)[H] f_\eta^{(1)}$, which, by definition of $\pt$, becomes the action of $\tauf$ on $\tD_\eta$. This proves the assertion. 
\end{proof}

\begin{proof}[Proof of \cref{D-conj}]
    Combine \cref{Achi-skew-power-series,Dchi-crossed-product}.
\end{proof}

\printbibliography
\end{document}